\title[Shape functional; sum of powers of subtree sizes and its mean]
{Conditioned Galton--Watson trees:\\ 
The shape functional, and more on the sum of powers of subtree sizes
and its mean}
\date{January 23, 2023}
\newcommand\urladdrx[1]{{\urladdr{\def~{{\tiny$\sim$}}#1}}}
\author{James Allen Fill}
\address{Department of Applied Mathematics and Statistics,
The Johns Hopkins University,
3400 N.~Charles Street,
Baltimore, MD 21218-2682 USA}
\email{jimfill@jhu.edu}
\thanks{Research of the first author supported 
by the Acheson~J.~Duncan Fund for the Advancement of Research in
Statistics.}
\author{Svante Janson}
\thanks{Research of the second and third authors supported by the Knut and Alice
  Wallenberg Foundation} 
\address{Department of Mathematics, Uppsala University, PO Box 480,
SE-751~06 Uppsala, Sweden}
\email{svante.janson@math.uu.se}
\author{Stephan Wagner}
\address{Department of Mathematics, Uppsala University, PO Box 480,
SE-751~06 Uppsala, Sweden}
\email{stephan.wagner@math.uu.se}
\keywords{Conditioned Galton--Watson tree, simply generated random tree, additive functional, tree recurrence, subtree sizes, shape functional, generating function, singularity analysis, Hadamard product of power series, method of moments, polylogarithm, Laplace transform}
\subjclass[2020]{Primary:\ 05C05; Secondary:\ 60F05, 60C05}
\numberwithin{equation}{section}
\renewcommand\leq{\le}
\renewcommand\geq{\ge}
\renewcommand\le{\leqslant}
\renewcommand\ge{\geqslant}
\theoremstyle{plain}
\newtheorem{theorem}{Theorem}[section]
\newtheorem{lemma}[theorem]{Lemma}
\theoremstyle{definition}
\newtheorem{example}[theorem]{Example}
\newtheorem{definition}[theorem]{Definition}
\newtheorem{problem}[theorem]{Problem}
\newtheorem{remark}[theorem]{Remark}
\newenvironment{romenumerate}[1][-10pt]{
\addtolength{\leftmargini}{#1}\begin{enumerate}
 }{\end{enumerate}}
\newcounter{oldenumi}
{\setcounter{oldenumi}{\value{enumi}}
\begin{romenumerate} \setcounter{enumi}{\value{oldenumi}}}
{\end{romenumerate}}
\newcounter{thmenumerate}
\newenvironment{thmenumerate}
{\setcounter{thmenumerate}{0}%
 \def\item{\par
 \refstepcounter{thmenumerate}\textup{(\roman{thmenumerate})\enspace}}
}
{}
\newcounter{xenumerate}   
\newcommand{\refT}[1]{Theorem~\ref{#1}}
\newcommand{\refTs}[1]{Theorems~\ref{#1}}
\newcommand{\refL}[1]{Lemma~\ref{#1}}
\newcommand{\refR}[1]{Remark~\ref{#1}}
\newcommand{\refS}[1]{Section~\ref{#1}}
\newcommand{\refSS}[1]{Section~\ref{#1}}
\newcommand{\refD}[1]{Definition~\ref{#1}}
\newcommand{\refApp}[1]{Appendix~\ref{#1}}
\xdef\klockan{\the\count1.0\the\count255}
\xdef\klockan{\the\count1.\the\count255}\fi
\newcommand\noqed{\renewcommand{\qed}{}} 
\DeclareMathOperator*{\sumsum}{\sum\sum}
\DeclareMathOperator*{\sumxx}{\sum\nolimits^{**}}
\DeclareMathOperator*{\sumo}{\sum\nolimits^{\circ}}
\newcommand{\sumko}{\sum_{k=0}^\infty}
\newcommand{\summl}{\sum_{m=0}^\ell}
\newcommand{\sumno}{\sum_{n=0}^\infty}
\newcommand{\sumn}{\sum_{n=1}^\infty}
\newcommand{\sumim}{\sum_{i=1}^m}
\newcommand\set[1]{\ensuremath{\{#1\}}}
\newcommand\xpar[1]{(#1)}
\newcommand\bigpar[1]{\bigl(#1\bigr)}
\newcommand\Bigpar[1]{\Bigl(#1\Bigr)}
\newcommand\lrpar[1]{\left(#1\right)}
\newcommand\bigsqpar[1]{\bigl[#1\bigr]}
\def\rompar(#1){\textup(#1\textup)}    
\newcommand\xfrac[2]{#1/#2}
\def\xexp(#1){e^{#1}}
\newcommand\floor[1]{\lfloor#1\rfloor}
\newcommand\ntoo{\ensuremath{{n\to\infty}}}
\newcommand\mtoo{\ensuremath{{m\to\infty}}}
\newcommand\downto{\searrow}
\newcommand\upto{\nearrow}
\newcommand\half{\tfrac12}
\newcommand\punkt{.\spacefactor=1000}    
\newcommand\ie{i.e\punkt}
\newcommand\eg{e.g\punkt}
\newcommand\cf{cf\punkt}
\newcommand{\as}{a.s\punkt}
\newcommand\ii{\mathrm{i}}
\newcommand{\tend}{\longrightarrow}
\newcommand\dto{\overset{\mathrm{d}}{\tend}}
\newcommand\pto{\overset{\mathrm{p}}{\tend}}
\newcommand\bbR{\mathbb R}
\newcommand\bbC{\mathbb C}
\newcounter{CC}
\newcommand{\CC}{\stepcounter{CC}\CCx} 
\newcommand{\CCx}{C_{\arabic{CC}}}     
\newcounter{cc}
\newcommand{\cc}{\stepcounter{cc}\ccx} 
\newcommand{\ccx}{c_{\arabic{cc}}}     
\newcommand{\ccdef}[1]{\xdef#1{\ccx}}     
\newcommand{\ccname}[1]{\cc\ccdef{#1}}    
\newcommand{\ccreset}{\setcounter{cc}0} 
\renewcommand\Re{\operatorname{Re}}
\renewcommand\Im{\operatorname{Im}}
\newcommand\E{\operatorname{\mathbb E{}}}
\renewcommand\P{\operatorname{\mathbb P{}}}
\newcommand\Var{\operatorname{Var}}
\newcommand\Po{\operatorname{Po}}
\newcommand\Bi{\operatorname{Bi}}
\newcommand\Ge{\operatorname{Ge}}
\newcommand\sgn{\operatorname{sgn}}
\newcommand\ga{\alpha}
\newcommand\gb{\beta}
\newcommand\gd{\delta}
\newcommand\gD{\Delta}
\newcommand\gam{\gamma}
\newcommand\gG{\Gamma}
\newcommand\gl{\lambda}
\newcommand\gs{\sigma}
\newcommand\gss{\sigma^2}
\newcommand\gth{\theta}
\newcommand\eps{\varepsilon}
\newcommand\cP{\mathcal P}
\newcommand\cT{{\mathcal T}}
\newcommand\smatrixx[1]{\left(\begin{smallmatrix}#1\end{smallmatrix}\right)}
\newcommand\qw{^{-1}}
\newcommand\qww{^{-2}}
\newcommand\qq{^{1/2}}
\newcommand\qqw{^{-1/2}}
\newcommand\oi{[0,1]}
\newcommand\ooo{[0,\infty)}
\newcommand\dd{\,\mathrm{d}}
\newcommand{\pgf}{probability generating function}
\newcommand{\mgf}{moment generating function}
\newcommand\lhs{left-hand side}
\newcommand\rhs{right-hand side}
\newcommand\GW{Galton--Watson}
\newcommand\GWt{\GW{} tree}
\newcommand\cGWt{conditioned \GW{} tree}
\newcommand\tX{{\widetilde X}}
\newcommand\tY{{\widetilde Y}}
\newcommand\tPhi{{\widetilde \Phi}}
\newcommand\tn{\cT_n}
\newcommand\tnv{\cT_{n,v}}
\newcommand\rea{\Re\ga}
\newcommand\rga{{\Re\ga}}
\newcommand\hX{\widehat X}
\newcommand\gdd{\frac{\gd}{2}}
\newcommand\txi{\tilde\xi}
\newcommand\tmu{\tilde\mu}
\newcommand\Li{\operatorname{Li}}
\newcommand\tR{\tilde R}
\newcommand\doi{D_{1}}
\newcommand\GDD{$\gD$-domain}
\newcommand\gda{$\gD$-analytic}
\newcommand\xxm{^{(m)}}
\newcommand\ctn{\cT_n}
\newcommand\Oz[1]{O\bigpar{|1-z|^{#1}}}
\newcommand\kk{\varkappa}
\newcommand\hkk{\widehat\kk}
\newcommand\kkx{\kk^*}
\newcommand\kky{\kk^*}
\newcommand\kkk{\kappa}
\newcommand\kkkx{\kkk^*}
\newcommand\hkkk{\widehat\kkk}
\newcommand\yz{\bigpar{y(z)}}
\newcommand\bga{\overline\ga}
\newcommand\NNo{\set{0,1,2,\dots}}
\newcommand\dF{F} 
\newcommand\df{f} 
\newcommand\lir{\rho}
\newcommand\lirx{\hat\rho}
\newcommand\cirx{\hat c}
\newcommand\muu{\mu'}
\newcommand\On[1]{O\bigpar{n^{#1}}}
\newcommand\ci{c^{(1)}}
\newcommand\cii{c^{(2)}}
\newcommand\ciii{c^{(3)}}
\newcommand\civ{c^{(4)}}
\newcommand\cv{c^{(5)}}
\newcommand\gdii{\frac{\gd}{2}}
\newcommand\gdde{\gdd-\eps}
\newcommand\bzeta{\overline\zeta}
\newcommand\zetat{\zeta_{\ii t}}
\newcommand\gsst{\gss_{\ii t}}
\newcommand\cgs{\varsigma}
\newcommand\cgss{\cgs^2}
\newcommand\cxx[1]{_{(#1)}}
\newcommand\cx{\cxx{c}}
\begin{document}

\begin{abstract} 
For a complex number $\ga$, we consider the sum of the $\ga$th powers of
subtree sizes in Galton--Watson trees conditioned to be of size~$n$. Limiting distributions of
this functional $X_n(\alpha)$ have been determined for $\rea \neq 0$, revealing a
transition between a complex normal limiting distribution for $\rea < 0$ and
a non-normal limiting distribution for $\rea > 0$. In this
paper, we
complete the picture by proving a normal limiting
distribution, along with moment convergence, in the missing case $\rea =
0$. The same results are also established in the case of the so-called shape functional $X_n'(0)$,
which is the sum of the logarithms of all subtree sizes; these results were obtained earlier in special
cases.
Additionally, we prove
convergence of all moments in the case $\rea < 0$, where this result was previously
missing, and establish new results about the asymptotic mean for real $\ga < 1/2$. 

A novel feature for $\rea=0$ is that we find joint convergence for several
$\ga$ to independent limits, in contrast to the cases $\rea\neq0$, where the
limit is known to be a continuous function of $\ga$. Another difference from
the case $\rea\neq0$ is that there is a logarithmic factor in the asymptotic
variance when $\rea=0$; this holds  also for the shape functional.

The proofs are largely based on singularity analysis of generating functions. 
\end{abstract}

\maketitle

\ccreset
\section{Introduction and main results}\label{S:intro}

This paper is a sequel to \cite{SJ359}.
As there, we consider a conditioned \GWt{} $\tn$
and the random variables
\begin{align}
  X_n(\ga)&:=F_\ga(\tn):=\sum_{v\in\tn}|\tnv|^\ga,\label{Xn}
\end{align}
where $\tnv$ is the fringe subtree of $\tn$ rooted at a 
vertex $v\in\tn$, \ie, the subtree consisting of $v$ and all its descendants.
This is a special case of what is known as an additive functional:~a functional associated with a rooted tree $T$ that can be expressed in the form
\begin{align}
 F(T) = \sum_{v\in T} f(T_v)
\end{align}
for a certain toll function $f$. Thus, $F_\ga$ is the additive functional on rooted trees defined
by the 
toll function $f_\ga(T):=|T|^\ga$. 
For reasons discussed in \cite{SJ359}, we allow the parameter $\ga$ to be
any complex number.
(See further \refS{S:not} for the notation used here and below.)

In \cite{SJ359}, 
it is assumed that the \cGWt{} $\tn$ is 
defined by some offspring distribution $\xi$ with $\E\xi=1$ and
$0 < \gss := \Var\xi < \infty$.
The main results 
are limit theorems showing that then the random variables
$X_n(\ga)$ converge in distribution after suitable normalization.
The results differ for the two cases $\rea<0$ and $\rea>0$:
Typical results are the following (here somewhat simplified), where
\begin{align}
\tX_n(\ga):=X_n(\ga)-\E X_n(\ga).  
\end{align}
For further related results, and references to previous work,
see \cite{SJ359}.

\begin{theorem}[{\cite[Theorem 1.1]{SJ359}}]\label{TSJ359-1.1}
  If\/ $\rea<0$, then
  \begin{align}\label{t1.1}
    n\qqw\tX_n(\ga)\dto \hX(\ga),
  \end{align}
where $\hX(\ga)$ is a centered complex normal random variable with
distribution depending on the offspring distribution $\xi$.
\end{theorem}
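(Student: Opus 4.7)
The plan is to reduce the complex-valued CLT to the real-valued CLT for additive functionals with bounded toll. Since $\rea<0$, we have $|f_\ga(T)|=|T|^\rea\le 1$, so the complex toll $f_\ga$ is uniformly bounded. Writing $X_n(\ga)=U_n+\ii V_n$ with $U_n,V_n$ the real additive functionals of $\tn$ carrying tolls $\Re f_\ga$ and $\Im f_\ga$, every real linear combination $aU_n+bV_n$ is again a real additive functional with a bounded real toll, so the general central limit theorem for bounded additive functionals of conditioned \GW{} trees (e.g., in the line of Janson's work on asymptotic normality of fringe subtree functionals) gives
\[
n\qqw\bigpar{aU_n+bV_n-\E[aU_n+bV_n]}\dto N\bigpar{0,\sigma_{a,b}^2}
\]
for each $(a,b)\in\bbR^2$. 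The Cram\'er--Wold device then yields joint asymptotic normality of $(U_n,V_n)$, equivalently $n\qqw\tX_n(\ga)\dto\hX(\ga)$ for some centered complex Gaussian $\hX(\ga)$ whose law depends on $\xi$.

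To identify the distribution of $\hX(\ga)$, I would carry out a singularity analysis of the generating functions associated with $f_\ga$. Let $y(z)=\sum_T\pi(T)z^{|T|}$ satisfy $y=z\phi(y)$ with square-root singularity at $z=\rho$. The recursion $F(T)=f(T)+\sum_{c}F(T_c)$ translates, after weighting by $\pi(T)z^{|T|}$ and summing, into the relation $Y_F(z)=(1-z\phi'(y(z)))\qw\,\hat Y_f(z)$, where $\hat Y_f(z):=\sum_T\pi(T)f(T)z^{|T|}$. Boundedness of $f_\ga$ ensures $\hat Y_{f_\ga}$ is bounded at $z=\rho$ and admits a $\gD$-analytic continuation, so a Flajolet--Odlyzko transfer extracts $\E X_n(\ga)\sim\mu(\ga)n$. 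Parallel analyses of the generating functions for the second moments $\E|X_n(\ga)|^2$ and $\E[X_n(\ga)^2]$ then deliver $\Var\hX(\ga)$ and the pseudo-variance $\E[\hX(\ga)^2]$.

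The main obstacle is the detailed singularity analysis needed to pin down $\Var\hX(\ga)$ and $\E[\hX(\ga)^2]$ as explicit functions of $\ga$ (and of $\xi$), since $\ga$ enters both through the complex exponent in $f_\ga$ and as a parameter in the analytic expansion near $\rho$. A subsidiary point is that $\hX(\ga)$ is in general not circularly symmetric, so both its variance and pseudo-variance must be determined to fully specify its distribution; convergence of moments, if desired, would require additional tightness arguments not immediately furnished by the general CLT.
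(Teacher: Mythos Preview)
Your approach is valid and yields the stated convergence, but it differs from the paper's route. The theorem is quoted from \cite{SJ359}; the present paper supplies its own new proof as a by-product of \refT{TNEG} (see \refR{Rmom}): it computes all mixed moments $\E\bigsqpar{\dF_\ga(\tn)^\ell\,\overline{\dF_\ga(\tn)}^r}$ by singularity analysis of the generating functions $M_{\ell,r}(z)$ via the recursion \eqref{lhz} (\refL{mellsa}, \refT{TNEGmom}), matches them to those of a complex normal, and applies the method of moments. You instead exploit the bound $|f_\ga(T)|=|T|^{\rea}\le1$ to invoke a general CLT for bounded additive functionals (as in \cite{SJ285}) together with Cram\'er--Wold. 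Your route is more direct for the bare distributional statement and plausibly needs only $\E\xi^2<\infty$ rather than \eqref{2+gd}; the paper's route delivers more---convergence of all moments---which is its actual goal in this regime and which, as you correctly note, would require separate uniform-integrability arguments on your path. Your second paragraph's sketch of the singularity analysis for the first two moments overlaps in spirit with what the paper carries out in \refSS{NEGmean}--\ref{NEGmom}, though the paper works throughout with the centered toll $b_n=n^\ga-\mu(\ga)$.
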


\begin{theorem}[{\cite[Theorem 1.2]{SJ359}}]\label{TSJ359-1.2}
  If\/ $\rea>0$, then
  \begin{align}\label{t1.2}
    n^{-\ga-\frac12}\tX_n(\ga)\dto \gs\qw\tY(\ga),
  \end{align}
where $\tY(\ga)$ is a centered random variable with
a (non-normal) 
distribution that depends on $\ga$ but 
does not depend on the offspring distribution $\xi$.
\end{theorem}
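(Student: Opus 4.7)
The plan is to combine the method of moments with singularity analysis of generating functions, using Aldous's continuum random tree (CRT) to identify the candidate limit. After rescaling edge lengths by $\gs/(2\sqrt n)$, the tree $\tn$ converges to the CRT $\cT$ equipped with its mass measure $\mu$; since $\rea>0$, the toll $|\cT_{n,v}|^\ga$ is carried predominantly by fringe subtrees of macroscopic size, which strongly suggests the limit
\begin{align*}
\gs\qw\tY(\ga)=\gs\qw\biggpar{\int_\cT \mu(\cT_x)^\ga\dd\mu(x)-\E\int_\cT \mu(\cT_x)^\ga\dd\mu(x)},
\end{align*}
where $\cT_x$ denotes the fringe subtree of $\cT$ rooted at~$x$. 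This object is manifestly independent of $\xi$ except through the prefactor $\gs\qw$, matching the claimed universality.

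For the proof itself, I would proceed via moments. Expanding
\begin{align*}
X_n(\ga)^k=\sum_{(v_1,\dots,v_k)}|\cT_{n,v_1}|^\ga\cdots|\cT_{n,v_k}|^\ga
\end{align*}
and partitioning the $k$-tuples according to the combinatorial ancestor/descendant pattern of $(v_1,\dots,v_k)$, one reduces $\E X_n(\ga)^k$ to a finite sum (indexed by such patterns) of expressions extractable from generating functions built from the offspring \pgf{} $\Phi$ via the standard marking/substitution calculus. The dominant singularity of the tree generating function $y(z)$ solving $y=z\Phi(y)$ is of square-root type whose local expansion is governed only by $\gs$, so Flajolet--Odlyzko singularity analysis yields $\E X_n(\ga)^k=c_k(\ga)n^{k(\ga+\frac12)}+o(n^{k(\ga+\frac12)})$, with constants $c_k(\ga)$ depending on $\xi$ only through a factor $\gs^{-k}$.

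The main obstacle will be moment determinacy of the putative limit $\tY(\ga)$. One needs bounds of the form $|c_k(\ga)|\le C(\ga)^k(k!)^A$ with $A$ small enough to apply a Carleman-type criterion, which requires tight control over the combinatorial explosion in the number of ancestor/descendant patterns and over the Gamma-function factors produced by singularity analysis at a square-root singularity; this is harder here than in the setting of \refT{TSJ359-1.1}, since the order of magnitude $n^{k(\ga+\frac12)}$ grows faster than a Gaussian scaling would. Once moment determinacy is in hand, the non-normality of $\tY(\ga)$ is visible already from an explicit computation of, say, the fourth cumulant, which involves Beta-type integrals arising from Brownian excursion fragmentation rather than the Wick pairings that would produce Gaussian moments; and the universality in $\xi$ is automatic from the dependence of $c_k(\ga)$ on $\xi$ through the single parameter $\gs$.
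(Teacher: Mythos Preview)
The present paper does not prove this theorem; it is quoted verbatim from the companion paper \cite{SJ359} (as Theorem~1.2 there) to provide background and contrast for the new results on $\rea=0$ and on moment convergence for $\rea<0$. There is therefore no proof here to compare your proposal against.

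For what it is worth, the paper does describe the methods of \cite{SJ359} in broad terms: a combination of (1) complex-analytic arguments exploiting the analyticity of $X_n(\ga)$ in $\ga$, and (2) moment analysis via singularity analysis of generating functions following \cite{FillFK}. Your plan aligns with~(2), though the actual implementation (as illustrated in the present paper by \refL{LH} and the recursion \eqref{lhz}) proceeds via a recursive formula for the generating functions $M_\ell(z)$ rather than a direct expansion of $X_n(\ga)^k$ over $k$-tuples classified by ancestor/descendant pattern. Your CRT heuristic for identifying $\tY(\ga)$ is reasonable motivation but not itself a proof, and your worry about moment determinacy is a real obstacle in any moment-based argument for this regime; note also that your expansion should be for the centered variable $\tX_n(\ga)$, not $X_n(\ga)$, since the uncentered mean is of order~$n$ and would otherwise swamp the fluctuation scale $n^{\ga+\frac12}$ when $\rea<\frac12$.
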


Note the three differences between the two cases:
\begin{romenumerate}
  \item \label{sj359a} 
the normalization is by different powers of $n$, with the power constant for
$\rea<0$ but not for $\rea>0$;
\item\label{sj359b} 
the limit is normal for $\rea<0$ but not for $\rea>0$;
\item\label{sj359c} 
the limit distribution is universal for $\rea>0$ in the sense that it
depends on $\xi$ only by the scale factor $\gs\qw$, but for $\rea<0$, the 
distribution seems to depend on the offspring distribution $\xi$ in a more
complicated way. (In the latter case, the distribution is complex normal,
so it is determined by the covariance matrix of 
$\bigpar{\Re\hX(\ga),\Im\hX(\ga)}$; a complicated formula for covariances is
given in \cite[Remark 5.1]{SJ359}, but we do not know how to evaluate it for
concrete examples, not even 
when $\ga<0$ is real and thus $\hX(\ga)$ is a real random variable.)
\end{romenumerate}

The results above leave a gap: the case $\rea=0$, and the main 
purpose of the
present paper is to fill this gap, and to compare the results with the cases
above.
The case $\ga=0$ is trivial, since $X_n(\ga)=n$ is non-random.
However, in this case we instead study the derivative
\begin{align}\label{r'}
  X_n'(0) = \sum_{v\in \cT_n}\log|\cT_{n,v}|
=\log\prod_{v\in \cT_n}|\cT_{n,v}|,
\end{align}
which is known as the \emph{shape functional}.
This has earlier been studied in some special cases in \eg{}
\cite{Fill96,MM98,Pittel,FillK04,FillFK,Caracciolo20},
see \refS{S:shape}.

Another gap in \cite{SJ359} is that moment convergence was proved for
$\rea>0$ (\refT{TSJ359-1.2}) but not for
$\rea<0$ (\refT{TSJ359-1.1}). We fill that gap too.

For technical convenience, 
we assume throughout the paper the weak extra moment condition
\begin{align}\label{2+gd}
  \E\xi^{2+\gd}<\infty,
\end{align}
for some $\gd>0$; we also continue to assume $\E\xi=1$.
We let $\cT$ be an unconditioned \GWt{} with offspring distribution $\xi$,
and define, for complex $\ga$ with $\rea<\frac12$,
\begin{align}\label{mu}
\mu(\ga)&
:=
\E|\cT|^\ga
= \sumn n^\ga \P(|\cT|=n),
\\
\label{xd}
  \muu&:=\mu'(0)
=\E \log|\cT|
= \sumn  \P(|\cT|=n)\log n
.\end{align}
(The sum \eqref{mu} converges 
for $\rea<\frac12$, since $\P(|\cT|=n)=O(n^{-3/2})$; 
see \eqref{qno}.) 

Our main results are the following.
Note that $X_n'(0)$ is a real random variable, while $X_n(\ii t)$ 
and $X_n(\ga)$ for $\ga\notin\bbR$ are
non-real except in trivial cases.
As said above, special cases of \refT{TZ} have been proved by
\citet{Pittel}, 
\citet{FillK04}, 
and
\citet{Caracciolo20}. 

\begin{theorem}\label{TZ}
Assume \eqref{2+gd} with $\gd>0$.
Then,
\begin{align}\label{tz}
\frac{  X_n'(0)-\mu'n}{\sqrt{n\log n}}\
\dto N\bigpar{0,4(1-\log2)\gs\qww}
\end{align}
together with convergence of all moments.
\end{theorem}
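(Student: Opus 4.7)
The plan is to prove \refT{TZ} by the method of moments, applying singularity analysis to generating functions associated with the toll $f_\ga(T) = |T|^\ga$ and then taking the derivative in $\ga$ at $\ga = 0$. Since $X_n'(0) = \frac{d}{d\ga} X_n(\ga)\big|_{\ga=0}$, the mean and higher central moments of $X_n'(0)$ can be read off from generating-function identities for $X_n(\ga)$ by differentiating at $\ga=0$. In particular, the paper's expansions of $\E X_n(\ga)$ for real $\ga < 1/2$ (the ``new results on the asymptotic mean'' mentioned in the abstract) should be real-analytic in $\ga$ near $0$, so that $\E X_n'(0) = n\mu' + o(\sqrt{n\log n})$ and the centering in \refT{TZ} is correct. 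The nonstandard normalization $\sqrt{n\log n}$ can be anticipated heuristically: $\ga = 0$ lies on the boundary between the regimes of \refT{TSJ359-1.1} and \refT{TSJ359-1.2}, whose scalings are $n\qq$ and $n\qgay$ respectively, and differentiating $n^\ga$ at $\ga = 0$ produces the extra logarithmic factor on top of the square-root scale familiar from $\rea < 0$.

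For the variance, write
\begin{align*}
\Var X_n'(0) = \sum_{v, w \in \cT_n} \Cov\bigpar{\log|\cT_{n,v}|,\log|\cT_{n,w}|}
\end{align*}
and consider the bivariate tolled generating function
\[
A(z;\ga,\gb) := \sumn z^n \P(|\cT|=n)\, \E\bigsqpar{X_n(\ga) X_n(\gb)}.
\]
Decomposing the pairs $(v,w)$ according to their genealogical relation (disjoint fringe subtrees, or one an ancestor of the other) and invoking the fundamental identity $y(z) = z\Phi(y(z))$, one can express $A(z;\ga,\gb)$ in closed form in terms of $y$, $\Phi$, and related auxiliary generating functions. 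Differentiating once in each of $\ga$ and $\gb$, setting $\ga = \gb = 0$, and performing singularity analysis at the dominant singularity $z = \lir$ should then yield $\Var X_n'(0) \sim 4(1-\log 2)\gs\qww\, n\log n$. The same strategy, applied to $k$-fold tolled generating functions and combined with the standard combinatorics of Gaussian moments, gives
\[
(n\log n)^{-k/2}\,\E\bigpar{X_n'(0) - n\mu'}^k \to \E Z^k, \qquad Z \sim N\bigpar{0, 4(1-\log 2)\gs\qww},
\]
for every integer $k \ge 1$, whence the method of moments delivers convergence in distribution together with convergence of all moments.

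The main technical obstacle will be the explicit evaluation of the variance constant $4(1-\log2)\gs\qww$. The $\log n$ factor appears only because differentiating certain tolled generating functions at $\ga = 0$ causes a power-type singularity and a logarithmic one to coincide, producing a factor of $\log\frac{1}{1-z/\lir}$ in the singular expansion that transfers, via standard singularity analysis, into the $\log n$ factor in $\Var X_n'(0)$. Extracting the coefficient $4(1-\log 2)\gs\qww$ requires carrying the singular expansion of $y(z)$ (and of its tolled analogues) past the leading order, and recognizing an integral identity---presumably involving $\Li_2$ or another polylogarithm---responsible for the precise value $1 - \log 2$. Checking that the contributions from the three genealogical configurations combine to yield exactly this constant, with no cancellation or boundary anomaly along the critical line $\rea = 0$, is the crux of the argument; the extension to higher moments is then largely a combinatorial exercise.
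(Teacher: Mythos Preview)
Your proposal is a plan rather than a proof, and it diverges from the paper's route in a way that hides the key mechanism. The paper does \emph{not} differentiate the $\ga$-family at $\ga=0$; it works directly with the toll $b_n=\log n-\mu'$, defines the moment generating functions $M_\ell(z)=\sum_n q_n\,\E[F(\cT_n)^\ell]\,z^n$, and computes them via the recursion of \refL{LH} (not a genealogical pair decomposition). Singularity analysis of $M_\ell$ then gives moment asymptotics, and the method of moments finishes.

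The substantive gap in your sketch is the cancellation you explicitly hope will be absent. In the paper's second-moment computation (\refL{LZ2}), the three terms of $R_2(z)$ individually have leading singularity of order $L(z)^2$, but the $L(z)^2$ contributions from the Hadamard term $2B\odot[zM_1\Phi'(y)]$ and the product term $zM_1^2\Phi''(y)$ cancel exactly, leaving $R_2(z)\sim 4(1-\log2)\gs^{-2}L(z)$. Without this cancellation one would obtain variance of order $n(\log n)^2$, not $n\log n$. The same cancellation recurs in the inductive step for higher moments (see \eqref{ew4}--\eqref{ew6} in \refL{LZC2}): the leading $L(z)^{\lfloor(\ell+1)/2\rfloor}$ term from $B\odot M_{\ell-1}$ is killed by the $M_1M_{\ell-1}$ term, so only $L(z)^{\lfloor\ell/2\rfloor}$ survives, which is precisely what makes odd moments subdominant and even moments Gaussian. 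Your bivariate/genealogical scheme would have to reproduce this cancellation, and nothing in your outline indicates how.

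Finally, the constant $1-\log2$ does not come from a $\Li_2$ identity but from the digamma identity $\gam+\psi(-\tfrac12)=2(1-\log2)$; it appears when the surviving $L(z)$-coefficients in \eqref{y5} and \eqref{y6} are combined after the $L(z)^2$ cancellation. The higher-moment recursion \eqref{kkkx2} then solves to \eqref{ick8z}, matching the moments of $N(0,4(1-\log2)\gs^{-2})$.
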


\begin{theorem}\label{Tit}
Assume \eqref{2+gd} with $\gd>0$.
Then, for any real $t\neq0$,
\begin{align}\label{tit}
\frac{X_n(\ii t)-\mu(\ii t)n}{\sqrt{n\log n}}\
\dto \zetat
\end{align}
together with convergence of all moments,
where $\zetat$ is a symmetric complex normal variable with
variance
\begin{align}\label{tit2}
  \E|\zetat|^2=
\frac{1}{\sqrt\pi}\Re\frac{\gG(\ii t-\half)}{\gG(\ii t)}\gs\qww
>0
.\end{align}
\end{theorem}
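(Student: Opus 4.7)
The plan is to establish \refT{Tit} by the method of moments, applied with the help of singularity analysis of generating functions for the mixed joint moments of $X_n(\ii t)$ and $X_n(-\ii t)=\overline{X_n(\ii t)}$. A symmetric complex normal $\zetat$ is determined by its covariance structure $\E\zetat^2=0$ and $\E|\zetat|^2$, and satisfies $\E\bigsqpar{\zetat^{\,j}\,\overline{\zetat}^{\,k}}=\gd_{jk}\,j!\,(\E|\zetat|^2)^j$; the method of moments therefore reduces the theorem to proving
\begin{align}
\frac{\E\bigsqpar{\tX_n(\ii t)^j\,\tX_n(-\ii t)^k}}{(n\log n)^{(j+k)/2}}\tend \gd_{jk}\,j!\,c(t)^j\quad\asntoo,
\end{align}
for every $j,k\ge 0$, where $c(t)$ denotes the \rhs{} of \eqref{tit2}.

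For the mean, I would use the standard additive-functional generating-function identity of \cite{SJ359}: with $B(z):=\E z^{|\cT|}$ and $\Psi$ the \pgf{} of $\xi$,
\begin{align}
\sum_T\P(T)F_\ga(T)\,z^{|T|}=\frac{\sum_n n^\ga\P(|\cT|=n)z^n}{1-z\Psi'(B(z))}.
\end{align}
Near $z=1$, the denominator satisfies $1-z\Psi'(B(z))\sim\gs\sqrt{2(1-z)}$, and the numerator is essentially a polylogarithm $\Li_{3/2-\ga}(z)$ (since $\P(|\cT|=n)\sim cn^{-3/2}$). Standard singularity analysis, as in the $\rea<0$ case of~\cite{SJ359}, then yields $\E X_n(\ii t)=\mu(\ii t)\,n+O(n^{1/2})$.

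For the variance, which is the crucial step, I would use the pair decomposition
\begin{align}
\E\bigsqpar{X_n(\ii t)X_n(-\ii t)}=\sum_{v,w\in\cT_n}\E\bigsqpar{|\cT_{n,v}|^{\ii t}\,|\cT_{n,w}|^{-\ii t}},
\end{align}
split according to whether $v=w$, one of $v,w$ is a proper ancestor of the other, or $v,w$ are incomparable. Each piece admits a generating-function representation coming from the recursive structure of \GWt{}s. The diagonal part contributes $n$, and the incomparable part matches $|\E X_n(\ii t)|^2$ to leading order, so the leading behaviour of $\Var X_n(\ii t)$ comes from the nested pairs. Its generating function, after explicit computation, develops a singularity of the form $(1-z)^{-1/2}\log\bigpar{1/(1-z)}$ near $z=1$: the two polylogarithmic factors in the nested-pair integrand have complex-conjugate singular exponents that sum to a real integer, producing a logarithmic antiderivative. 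Transferring by singularity analysis yields the $n\log n$ scaling, and the coefficient $c(t)$ of~\eqref{tit2} emerges from a Gamma-function computation. A parallel but simpler argument (with $\ii t$ in place of $-\ii t$) shows $\E[\tX_n(\ii t)^2]=o(n\log n)$, corresponding to $\E\zetat^2=0$ and to the symmetry of $\zetat$.

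The higher mixed moments are handled by the same machinery using multivariate generating functions in additional marking variables. A standard diagram/cumulant expansion shows that, at leading order $(n\log n)^{(j+k)/2}$, only matched Wick pairings (one $\ii t$ with one $-\ii t$) survive: unmatched pairings give no logarithm, and triple or higher joins give subleading powers of~$n$. This recovers Wick's formula for a symmetric complex normal and, combined with the variance computation, yields both \eqref{tit} and convergence of all moments. The principal obstacle is isolating the logarithmic factor in the variance step: one must expand the nested-pair integrand to the exact order at which the resonance between conjugate exponents occurs, and verify that the logarithmic contribution survives integration and singularity transfer. This resonance is precisely the algebraic signature of the phase transition at $\rea=0$ between the regimes of \refT{TSJ359-1.1} and \refT{TSJ359-1.2}.
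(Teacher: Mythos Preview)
Your overall strategy---method of moments for a complex normal limit, extracting mixed moments of $F_{\ii t}(\cT_n)$ and $\overline{F_{\ii t}(\cT_n)}$ via singularity analysis, and tracing the $n\log n$ scaling to a resonance between conjugate singular exponents---is the same as the paper's. Your formula for the mean generating function is also equivalent to the paper's \eqref{sx1}, since $zy'(z)/y(z)=\bigpar{1-z\Phi'(y(z))}\qw$.

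Where you diverge is in organisation. For the second moment you propose an ancestry decomposition of $\sum_{v,w}$, whereas the paper works directly with the \emph{centered} functional $F_\ga(\cT)=X(\ga)-\mu(\ga)|\cT|$ and the root-split recursion \eqref{lhz} for $M_{\ell,r}(z):=\E[F_{\ga}(\cT)^\ell F_{\bga}(\cT)^r z^{|\cT|}]$. Your statement that ``the incomparable part matches $|\E X_n(\ii t)|^2$ to leading order'' is a heuristic that would need real work to make precise (incomparable fringe subtrees in $\cT_n$ are not independent, and you need the remainder after cancellation to be $o(n\log n)$). The paper sidesteps this entirely: because $M_{1,0}(z),M_{0,1}(z)=O(1)$ after centering (see \eqref{theo}), the ``incomparable'' term $M_{1,0}M_{0,1}\Phi''(y)$ in \eqref{the1} is automatically $O(1)$ in the singular expansion, and the logarithm emerges cleanly from the Hadamard-product (nested) terms via $\Li_{1-\ga-\bga}(z)=\Li_1(z)=L(z)$; see \eqref{the3}--\eqref{the0}. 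So your identification of the mechanism is right, but the centering-first approach is what makes it rigorous without extra estimates.

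The substantive gap is the higher moments. ``Standard diagram/cumulant expansion'' and ``only matched Wick pairings survive'' is not yet a proof here: for conditioned \GW{} trees this has to be established, and the paper does so via an explicit induction (\refL{Lth}) on the singular structure of $M_{\ell,r}(z)$. The induction shows $M_{\ell,r}(z)$ has an expansion in terms $(1-z)^{(1-\ell-r)/2+j\ga}L(z)^k$ with $k\le\ell\land r$, and, crucially, that the maximal power $k=\ell$ occurs only when $\ell=r$ and $j=0$, with leading coefficient $\kkx_\ell$ satisfying the quadratic recursion \eqref{lthkk}. Solving that recursion (as in \eqref{alla}--\eqref{id1}) is what produces the factor $\ell!$ and hence the symmetric complex normal moments \eqref{cnormal}. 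Your sketch asserts this outcome but does not supply the inductive control of the lower-order $L(z)^k$ terms that makes it work. Finally, the strict positivity in \eqref{tit2} is a separate (nontrivial) point, handled in the paper by reference to \cite[Theorem~D.1]{SJ359} with the correction in \refApp{Atypo}; your proposal does not address it.
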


\begin{theorem}\label{TNEG}
Assume \eqref{2+gd} with $\gd>0$.
Then, for any complex $\ga$ with $\rea<0$,
\begin{align}\label{tneg}
\frac{X_n(\ga)-\mu(\ga)n}{\sqrt{n}}
\dto \hX(\ga)
\end{align}
together with convergence of all moments,
where $\hX(\ga)$ is
a centered complex normal random variable with
positive variance and
distribution depending on the offspring distribution $\xi$.
Hence, \eqref{t1.1} holds with convergence of all moments.
\end{theorem}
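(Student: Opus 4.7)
The plan is to split \refT{TNEG} into two pieces. The distributional convergence in \eqref{tneg} follows from \refT{TSJ359-1.1}, provided we establish the mean estimate
\begin{align*}
  \E X_n(\ga)=\mu(\ga)\,n+o\bigpar{\sqrt n},\qquad \rea<0.
\end{align*}
I would prove this by expressing $\E X_n(\ga)$ through the standard fringe-subtree decomposition for \cGWt{}s,
\begin{align*}
  \E X_n(\ga)=\sum_{m=1}^n m^\ga\cdot \E\bigpar{\text{number of fringe subtrees of $\tn$ of size $m$}},
\end{align*}
and then applying singularity analysis at $z=1$; the moment condition \eqref{2+gd} gives access to a $\gD$-domain for the tree generating function $y(z)$. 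Both the generating function of fringe-subtree counts and the weighted series $\sum_m m^\ga\P(|\cT|=m)z^m$ have explicit square-root singularities at $z=1$, and their Hadamard-type combination yields the desired estimate with a uniformly controlled error. Slutsky's theorem then converts \eqref{t1.1} into \eqref{tneg}.

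For moment convergence, the strategy is to combine the distributional limit with $L^p$-boundedness of $n\qqw\tX_n(\ga)$ for every $p$: convergence in distribution together with $L^p$-boundedness yields $L^q$-convergence for every $q<p$, and since this holds for every $p$, all moments converge. The needed bound
\begin{align*}
  \sup_{n\ge1}\E\bigabs{n\qqw\tX_n(\ga)}^{2p}<\infty
\end{align*}
I would derive from explicit generating-function computations of the mixed moments $\E\bigsqpar{\tX_n(\ga)^j\,\overline{\tX_n(\ga)}^k}$ with $j+k=2p$. Each such mixed moment expands into a $(j+k)$-fold sum over vertex tuples in $\tn$; grouping by the combinatorial ancestor structure of the tuple yields a finite collection of generating functions whose coefficients are extracted by singularity analysis in the same $\gD$-domain.

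The main obstacle I expect is uniformity and cancellation in this moment computation: individual contributions may, before centering, be of order $n^{(j+k)/2+c}$ for some $c>0$, and only the subtraction of $\mu(\ga)n$ from $X_n(\ga)$ produces a net contribution of order $n^{(j+k)/2}$. A clean way to organize this is the method of cumulants: it suffices to show that the joint cumulants of $\tX_n(\ga)$ and $\overline{\tX_n(\ga)}$ of total order $2$ grow linearly in $n$ and match the corresponding covariances of $\hX(\ga)$, while those of order $k\ge3$ are $o(n^{k/2})$. The vanishing of the higher-order cumulants in the limit reflects the fact that $\hX(\ga)$ is complex normal, so the task reduces to a careful but finite singularity analysis of a structured family of generating functions for each order $k$.
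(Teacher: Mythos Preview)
Your strategy is sound but differs from the paper's organization. The paper does not invoke \refT{TSJ359-1.1} at all; instead it computes the \emph{exact} asymptotics of every mixed moment $\E\bigsqpar{F_\ga(\tn)^\ell\,\overline{F_\ga(\tn)}^r}$ by singularity analysis of the recursively defined generating functions $M_{\ell,r}(z)$ (\refL{mellsa} and \refT{TNEGmom}), identifies the limits as mixed moments of a complex normal vector via Wick's theorem, and then applies the method of moments. This gives distributional convergence and moment convergence simultaneously, and in fact constitutes a new, self-contained proof of \refT{TSJ359-1.1} (cf.\ \refR{Rmom}). Your route instead recycles the distributional limit from \cite{SJ359} and only needs $L^p$-\emph{bounds}; in principle this is less, but in practice the moment bounds you propose require essentially the same recursive singularity analysis as the paper's exact computation, so there is no real saving. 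Your cumulant paragraph even describes a full method-of-moments proof (second cumulants matching, higher cumulants $o(n^{k/2})$), which is more than needed for mere boundedness and makes the appeal to \refT{TSJ359-1.1} redundant.

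One genuine omission: the theorem asserts that $\hX(\ga)$ has \emph{positive} variance, which is not part of \refT{TSJ359-1.1} as stated and does not drop out of your moment or cumulant bounds. The paper proves this separately: \refL{Lnondeterm} exhibits a tree size $k$ with $q_k>0$ for which $F_\ga(\cT_k)$ is nondeterministic, and then a conditioning argument on the number of fringe subtrees of size $k$ in $\tn$ gives $\Var F_\ga(\tn)\ge (1+o(1))\,q_k\,\Var F_\ga(\cT_k)\,n$, forcing the limiting variance to be strictly positive. Your proposal should include some argument of this kind.
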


\begin{remark}\label{Rmom}
  By ``convergence of all moments'', we mean in the case of complex variables,
  $Z_n$ say, convergence of all mixed moments of $Z_n$ and $\overline {Z_n}$,
which is equivalent to convergence of all mixed moments of $\Re Z_n$ and
$\Im Z_n$.
Since we have convergence in distribution, this is by a standard
argument using uniform integrability also equivalent to convergence of all
absolute moments.

Note that, conversely, by the method of moments 
applied to $(\Re Z_n,\Im Z_n)$,
this implies convergence in distribution of
$Z_n$, provided, as is the case here, the limit distribution is determined
by its moments. Thus, our proof of moment convergence provides a new proof
of \refT{TSJ359-1.1}, very different from the proof in \cite{SJ359}.
\end{remark}

\begin{remark}\label{Rcenters}
Since the statements include convergence of the first moments (to 0), 
we may in \refTs{TZ}--\ref{TNEG} replace $\mu'n$, $\mu(\ii t)n$, and 
$\mu(\ga)n$ by 
the expectations $\E X_n'(0)$, $\E X_n(\ii t)$, and $\E X_n(\ga)$, respectively;
in particular, this gives the last sentence in \refT{TNEG}.
More precise estimates of the expectations are given in \eqref{lz1b},
\eqref{lz1b++}, 
\eqref{ixq}, and \eqref{ixq-}.
\end{remark}

\refTs{TZ} and \ref{Tit} 
combine some of the features found for $\rea<0$
and $\rea>0$ in \ref{sj359a}--\ref{sj359c} above.
First, the variances in \refTs{TZ} and \ref{Tit}
are of order $n\log n$. This might be a surprise since  it is not what a naive
extrapolation from either $\rea<0$ in \refT{TSJ359-1.1} or $\rea>0$ 
in \refT{TSJ359-1.2} would yield, where the variances are of order 
$n$ ($\rea<0$) and  $n^{1+2\rea}$ ($\rea>0$); however, it is not surprising
that a logarithmic  factor appears when the two different expressions meet.
Secondly, the limits are normal, as heuristically would be expected by 
``continuity'' from the left, see \ref{sj359b}.
Thirdly, the limits are universal and depend only on $\gs$ as a scale
factor,
as heuristically would be expected by 
``continuity'' from the right, see \ref{sj359c}.

The proofs in \cite{SJ359} use two different methods, which are combined to
yield the full results: (1) methods using complex analysis and the fact
that $X_n(\ga)$ is an analytic function of $\ga$, and (2) analysis of
moments for a fixed $\ga$ using singularity analysis of generating functions
based on results
of \citet{FillFK}, also presented in \cite[Section VI.10]{FS}.
In the present paper, we will use only the second method.
We follow the proofs in \cite{SJ359} with some variations
(see also \cite{FillK03} and \cite{FillK04}).
However, some new leading terms will appear in the singular
expansions of the generating functions, which
will dominate the terms that are leading in \cite{SJ359};
this explains both the
logarithmic factors in the variance (and  in higher moments)
in \refTs{TZ} and \ref{Tit}, and the fact that these theorems 
yield normal limits
while 
\refT{TSJ359-1.2} 
does not.

After some preliminaries in \refS{S:not},
we first study the shape functional and prove \refT{TZ} in \refS{S:shape};
we then study the case of imaginary exponents and prove \refT{Tit} in
\refS{S:I};
after that, we consider the case $\rea<0$ and prove \refT{TNEG} in \refS{NEG}.
These three sections use the same  method 
(from \cite{FillK04} and \cite{SJ359}), 
and are thus quite similar,
but some details differ. 
The differences arise
partly because $X_n'(0)$ is real, while $X_n(\ii t)$
and (in general) $X_n(\ga)$
are not; we will also see that the logarithmic factors in the first two
cases appear in the moments in somewhat different ways,
and that there is a cancellation of some leading terms in our induction
for the first and third case, but not for $X_n(\ii t)$.
For this reason, we give complete arguments for all three cases,
and we encourage the reader to compare them and see both similarities and
differences.

In \refS{S:comp} we show how the centering functions \eqref{mu} and \eqref{xd}
can be compared across variation in the offspring distribution when (real) $\ga$
satisfies $\ga < \frac12$.

\begin{remark}\label{Rjoint}
  The results in \cite{SJ359} show also joint convergence for different
  $\ga$ in \refTs{TSJ359-1.1} and \ref{TSJ359-1.2}, 
with limits $\hX(\ga)$ and $\tY(\ga)$ that are analytic, and in
  particular continuous, random functions of the parameter $\ga$ in the
  half-planes $\rea<0$ and $\rea>0$, respectively.
This does \emph{not} extend to the imaginary axis $\rea=0$; we will see 
in \refT{Tjoint} that
$X_n(\ga)$ for different imaginary $\ga$ are asymptotically independent (for
$\Im\ga>0$), 
and thus it is not possible to have joint convergence to a continuous random
function. 
\end{remark}

\begin{remark}\label{R=}
  Let $\ga=s+\ii t$, where $t$ is real and fixed, and let $s\downto0$.
(Thus  $s>0$ is real.) 
It is shown in \cite[Appendix D]{SJ359} that if $t\neq0$, then the limit
$\tY(s+\ii t)$ diverges (in probability, say) as $s\downto0$, and that
$s\qq\tY(s+\ii t)\dto\zeta$, 
where $\zeta$ is a symmetric complex normal variable with
\begin{align}\label{ri2}
    \E|\zeta|^2
=\frac{1}{2\sqrt\pi} \Re\frac{\gG(\ii t-\frac12)}{\gG(\ii t)}>0.
\end{align}
(However,  unfortunately there is a typo in \cite[(D.2)]{SJ359},
see \refApp{Atypo}.)
Similarly, it is shown in \cite[Appendix C]{SJ359} 
that 
$s\qqw\tY(s)\dto N(0,2(1-\log2))$ as $s\downto 0$;
in particular $s\qw\tY(s)$ 
diverges.

These results may be compared to \refTs{TZ}--\ref{Tit};
note that 
the limits are the same,
except that the variances in both cases differ by a factor $1/2$
(which of course depends on the chosen normalizations).
Both sets of results
can be regarded as
 iterated 
limits of $\tX_n(s+\ii t)$, taking \ntoo{} and
$s\downto0$ in different orders. The divergence of $\tY(s+\ii t)$ as $s\downto0$
(for fixed $t\neq 0$) thus seems to be related to the fact that the
asymptotic variance in \refT{TZ} is
 of greater order
than $n$, and similarly
the divergence as $s\downto0$
of $s\qw\tY(s)$ 
(which loosely might be regarded as an approximation of $n\qqw \tX_n'(0)$)
seems related to \refT{Tit}.
However, we do not see why the factors $s^{\pm\frac12}$ in these limits
should correspond to the
factor  $(\log n)^{1/2}$ in \refTs{TZ} and \ref{Tit}
[or more precisely 
to the factor $(2 \log n)^{1/2}$,
to get exactly the same limit distributions].
\end{remark}

We end with some problems suggested by the results and comments above.

\begin{problem}\label{P=}
Is 
there a simple explanation of the equality discussed in \refR{R=}
of 
iterated
limits in different orders, but with different normalizations?
Is this an instance of some general phenomenon?
What happens if $s\downto0$ and \ntoo{} simultaneously, i.e., 
for
$\tX_n(s_n+\ii t)$ where $s_n\downto0$ at  some appropriate rate?
\end{problem}

The asymptotic independence of $X(\ii t)$ for $t>0$ mentioned in
\refR{Rjoint} suggests informally that the stochastic process 
$(\tX_n(\ii t): t\ge0)$ asymptotically looks something like white noise. 
This might be investigated further, for example as follows.
\begin{problem}\label{Pwhite}
Consider the integrated process $\int_0^t \tX_n(\ii u)\dd u$.
What is the order of its variance?
Does this process after normalization
converge to a process with paths that are continuous in~$t$?
\end{problem}

The moment assumption \eqref{2+gd} is used repeatedly to control error
terms, but it seems convenient rather than necessary.

\begin{problem}
  We conjecture that  \refTs{TZ}--\ref{TNEG} hold also without the
  assumption \eqref{2+gd}. Prove (or disprove) this!
\end{problem}

\section{Notation and preliminaries}\label{S:not}
\subsection{General notation}\label{Sgennot}
As said above, $\cT$ is a \GWt{} 
defined by an offspring
distribution $\xi$ with mean $\E\xi=1$ and finite non-zero variance
$\gss:=\Var\xi<\infty$, and  we assume \eqref{2+gd} for some $\gd>0$. 
Furthermore, the \cGWt{} $\tn$ is defined as $\cT$ conditioned on $|\cT|=n$.
We assume for simplicity that $\xi$ has span 1; the general case follows by
standard (and minor) modifications.

$\gG(z)$ denotes the Gamma function,  $\psi(z):=\gG'(z)/\gG(z)$ is its
logarithmic derivative, and $\gamma=-\psi(1)$ is Euler's constant.

A random variable $\zeta$ has a \emph{complex normal distribution}
if it
takes values in $\bbC$ and $(\Re\zeta,\Im\zeta)$ is a 2-dimensional
normal distribution (with arbitrary covariance matrix).
In particular,
$\zeta$ is \emph{symmetric complex normal}
if further
$\E\zeta=0$ and
$(\Re\zeta,\Im\zeta)$ has covariance matrix
$\smatrixx{\cgss/2&0\\0&\cgss/2}$ for some $\cgss=\E|\zeta|^2$,
which is called the \emph{variance};
equivalently, $\E\zeta=0$, $\E\zeta^2=0$, and $\E|\zeta|^2=\cgss$.
(See \eg{} \cite[Proposition 1.31]{SJIII}.)
A symmetric complex normal distribution with variance $\cgss$
is determined by the mixed moments
of $\zeta$ and $\bzeta$, which are given by
(see \cite[p.~14]{SJIII})
\begin{align}\label{cnormal}
  \E\bigsqpar{\zeta\strut^\ell\,\bzeta^r}=
  \begin{cases}
   \cgs^{2\ell}\ell!, & \ell=r,
\\
0,&\ell\neq r.
  \end{cases}
\end{align}

Unspecified limits are as \ntoo.
We let
$\dto$ denote convergence in distribution.

For real $x$ and $y$, we denote $\min(x,y)$ by $x\land y$.

The semifactorial $\ell!!$ is defined for odd integers $\ell$ (the only case
that we use) by
\begin{align}\label{semi}
  \ell!! := 1\cdot 3 \cdot \, \cdots \, \cdot \ell
=2^{(\ell+1)/2} \gG\lrpar{\tfrac{\ell}{2}+1}/\sqrt\pi.
\end{align}
Note that $(-1)!!=1!!=1$.

$\eps$ denotes an arbitrarily small fixed number with $\eps>0$.
(We will tacitly assume that $\eps$ is sufficiently small when necessary.)

$C$ and $c$ denote unimportant positive constants, possibly different each
time; these may depend on the parameter $\ga$ (or $\ga_1,\ga_2$ below).
We sometimes use $c$ with subscripts; these keep the same value
within the same section.

\subsection{\GDD{s} and singularity analysis}\label{SSGDD}
A \GDD{} is a complex domain of the type
\begin{align}\label{GDD}
\set{z:|z|<R,\, z\neq1,\,|\arg(z-1)|>\gth}  
\end{align}
where $R>1$ and $0<\gth<\pi/2$, see \cite[Section VI.3]{FS}.
A function is \emph{\gda} if it is analytic in some \GDD{}
(or can be analytically continued to such a domain). 

Our proofs are based on singularity analysis of various generating functions
(see \cite[Chapter VI]{FS}),
using estimates as
$z\to1$ in a suitable \GDD;
the domain may be different each time.
All estimates below of analytic functions tacitly are 
valid in some \GDD{s} 
(possibly different ones for different functions), 
even when that is not said explicitly.

\subsection{Polylogarithms}
$\Li_\ga(z)$ and $\Li_{\ga,r}(z)$ denote polylogarithms and generalized
polylogarithms, respectively; 
they are defined for $\ga\in\bbC$ and $r=0,1,\dots$
by the power series 
\begin{align}\label{lix}
  \Li_\ga(z)&:=\sumn n^{-\ga}z^n,
\\
\label{lixr}
  \Li_{\ga,r}(z)&:=\sumn (\log n)^r\frac{z^n}{n^\ga}
\end{align}
for $|z|<1$, 
and then extended analytically to $\bbC\setminus\ooo$
(in particular they are \gda); 
see   \eg{} \cite[Section VI.8]{FS}.
Note that $\Li_{\ga,0}(z)=\Li_\ga(z)$.
We will also use the notation
\begin{align}\label{Lz}
  L(z):=-\log(1-z)
=\sumn\frac{z^n}{n}
=\Li_{1}(z)
.\end{align}

We will use
singular expansions of polylogarithms and generalized polylogarithms 
into powers of $1-z$, possibly including powers of $L(z)$.
Infinite singular expansions of polylogarithms and generalized polylogarithms 
are given by \citet[Theorem 1]{Flajolet1999}
(also \cite[Theorem VI.7]{FS}); we will
mainly use only the following simple versions keeping only the main terms.

For any real $a$, let 
$\cP_a$ be the set of all polynomials in $z$ of degree $<a$.
In particular, if $a\le0$, then $\cP_a=\set0$. If $0\le a\le1$, then every
polynomial in $\cP_a$ is constant. These simple cases are the ones
of most interest to us.

We then have, for each $\ga\notin\set{1,2,\dots}$,
\begin{align}\label{lia}
\Li_{\ga}(z) = \gG(1-\ga)(1-z)^{\ga-1} +P(z)+ O\bigpar{|1-z|^{\rga}},
\end{align}
for some $ P(z)\in\cP_{\rga}$.

Moreover, in our proofs we will often go back and forth between expansions
in powers of $1-z$ (including powers of $L(z)$)
and expansions in (generalized) polylogarithms, using the following simple
consequence of the singular expansions of generalized polylogarithms, proved
in \cite{FillK04}. (Here slightly simplified.)

\begin{lemma}[{\cite[Lemmas 2.5--2.6]{FillK04}}]\label{Liar}
Suppose that $\rea<1$. 
Then,
for each $r\ge0$,
in any fixed \GDD{} and for any  $\eps>0$, 
\begin{align}\label{liar1}
  \Li_{\ga,r}(z) 
&= \sum_{j=0}^r \lir_{r,j}(\ga) (1-z)^{\ga-1} L(z)^j + c_r(\ga)
+\Oz{\Re \ga-\eps},
\end{align}
for some coefficients $\lir_{r,j}(\ga)$
and $c_r(\ga)$, 
with leading coefficient
\begin{align}\label{liar2}
  \lir_{r,r}(\ga)
=\gG(1-\ga).
\end{align}
Conversely, 
\begin{align}\label{liar0}
(1-z)^{\ga-1} L(z)^r
&= \sum_{j=0}^r \lirx_{r,j}(\ga)  \Li_{\ga,j}(z) +\cirx_r(\ga)
+\Oz{\Re \ga-\eps},
\end{align}
for some coefficients $\lirx_{r,j}(\ga)$ and $\cirx_r(\ga)$,
with
\begin{align}\label{liar20}
  \lirx_{r,r}(\ga)
=\lir_{r,r}(\ga)\qw=\gG(1-\ga)\qw.
\end{align} 
\end{lemma}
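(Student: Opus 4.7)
The plan is to derive both \eqref{liar1} and \eqref{liar0} from the basic singular expansion \eqref{lia} by differentiating in the parameter $\ga$. The key bridge is the identity
\begin{align*}
\Li_{\ga,r}(z) = (-1)^r \frac{\partial^r}{\partial\ga^r}\Li_\ga(z),
\end{align*}
obtained by termwise differentiation of \eqref{lixr} via $\partial_\ga(n^{-\ga}) = -(\log n)\, n^{-\ga}$. Combined with $\partial_\ga\bigl[(1-z)^{\ga-1}\bigr] = -L(z)(1-z)^{\ga-1}$, this reduces the problem to a Leibniz-rule computation plus a uniform-in-$\ga$ differentiation argument for the error term.

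First I would fix a refined form of \eqref{lia}, namely
\begin{align*}
\Li_\ga(z) = \gG(1-\ga)(1-z)^{\ga-1} + Q_\ga(z) + O\bigpar{|1-z|^N},
\end{align*}
with $N$ chosen large and $Q_\ga(z)$ a polynomial in $z$ with coefficients analytic in $\ga$ (taken from the full expansion in \cite[Theorem VI.7]{FS}, whose Taylor coefficients involve $\zeta(\ga-k)$). Applying $(-\partial_\ga)^r$ and expanding the principal term by Leibniz's rule produces
\begin{align*}
\sum_{j=0}^{r}\binom{r}{j}(-1)^{r-j}\partial_\ga^{r-j}\bigl[\gG(1-\ga)\bigr]\cdot(1-z)^{\ga-1}L(z)^j,
\end{align*}
which identifies the coefficients $\lir_{r,j}(\ga)$; taking $j=r$ gives $\lir_{r,r}(\ga)=\gG(1-\ga)$, which is \eqref{liar2}. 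Differentiating $Q_\ga(z)$ in $\ga$ yields another analytic function of $z$; its value at $z=1$ supplies the constant $c_r(\ga)$, while its remaining $z$-variation is $O(|1-z|)$ and so is absorbed into the stated error since $\rea-\eps<1$. Finally, the error $O(|1-z|^N)$ is propagated through $r$ $\ga$-derivatives by Cauchy's integral formula on a disk of radius $\eps$ around $\ga$: this costs a harmless factor $r!/\eps^r$ and shrinks the effective exponent from $\rea$ to $\rea-\eps$, the infimum of $\Re\ga'$ over that disk, provided $N$ was chosen large enough at the start.

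For the converse \eqref{liar0}, I would read \eqref{liar1} for $r=0,1,2,\dots$ as a lower-triangular linear relation expressing $(\Li_{\ga,j})_{j\le r}$ in terms of $\bigpar{(1-z)^{\ga-1}L(z)^j}_{j\le r}$ (modulo constants and the error), whose diagonal entries $\lir_{j,j}(\ga)=\gG(1-\ga)$ are nonzero because $\rea<1$. Triangular back-substitution across finitely many rows then gives \eqref{liar0} with leading coefficient $\lirx_{r,r}(\ga)=\gG(1-\ga)\qw$, establishing \eqref{liar20}; the error and constant are preserved up to a further harmless adjustment of $\eps$. I expect the main technical obstacle to be the Cauchy-formula step that carries the error term through $r$ $\ga$-differentiations uniformly in $z$ on a \GDD{}: this requires joint analyticity of $\Li_\ga(z)$ in $(z,\ga)$, with uniform bounds, on the \GDD{} times a small $\ga$-disk. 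Once that uniformity is set up, the Leibniz expansion and the triangular inversion are algebraically routine.
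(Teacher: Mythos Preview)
The paper does not itself prove this lemma; it is quoted from \cite{FillK04}, which in turn reads it off from the full singular expansion of $\Li_{\ga,r}$ in \cite[Theorem~1]{Flajolet1999} (also \cite[Theorem~VI.7]{FS}). Your route --- differentiate the expansion of $\Li_\ga$ in the parameter $\ga$ --- is precisely how that expansion of $\Li_{\ga,r}$ is obtained in \cite{Flajolet1999}, so the overall strategy is sound, and the triangular back-substitution for \eqref{liar0}--\eqref{liar20} is correct.

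There is, however, one real slip in your setup. The ``refined form''
\begin{align*}
\Li_\ga(z)=\gG(1-\ga)(1-z)^{\ga-1}+Q_\ga(z)+O\bigl(|1-z|^N\bigr)
\end{align*}
with $N$ large and $Q_\ga$ a \emph{polynomial} in $z$ does not exist: besides the regular terms involving $\zeta(\ga-k)$ that you mention, the full expansion also carries subleading singular terms $A_k(\ga)(1-z)^{\ga-1+k}$ for $k\ge1$ (produced when $(-\log z)^{\ga-1}$ is re-expanded in powers of $1-z$), and these are neither polynomial in $z$ nor $O(|1-z|^N)$ once $N>\rea+k-1$. Fortunately you do not need $N$ large. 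Use instead
\begin{align*}
\Li_{\ga'}(z)=\gG(1-\ga')(1-z)^{\ga'-1}+\zeta(\ga')+E(\ga',z),
\end{align*}
where $E$ is jointly analytic in $(\ga',z)$ and $E(\ga',z)=O(|1-z|^{\Re\ga'})$ locally uniformly in $\ga'$. On the circle $|\ga'-\ga|=\eps$ one has $\Re\ga'\ge\rea-\eps$, so Cauchy's estimate gives $\partial_\ga^r E(\ga,z)=O(|1-z|^{\rea-\eps})$ directly --- this is the exponent shrinkage you describe, but applied to the correct error term. The Leibniz expansion of the first term and the constant $(-1)^r\zeta^{(r)}(\ga)$ from the second then yield \eqref{liar1}--\eqref{liar2} exactly as you outline.
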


\begin{remark}
  The lemmas in \cite{FillK04} are stated for real $\ga$, but the proofs
  hold also for complex $\ga$. Moreover, the results extend to $\ga$ with
$\rea\ge1$, assuming 
$\ga\notin\set{1,2,\dots}$,
provided the error terms $\Oz{\Re \ga-\eps}$  are replaced by $O(|1-z|)$ when
$\rea>1$.
\end{remark}

\subsection{Hadamard products}
Recall  that the \emph{Hadamard product} $A(z)\odot B(z)$
of two power series
$A(z)=\sumno a_n z^n$ and $B(z)=\sumno b_n z^n$
is defined by 
\begin{align}\label{hadamard}
  A(z)\odot B(z) := \sumno a_n b_n z^n.
\end{align}
As a simple example, for any complex $\ga$ and $\gb$,
\begin{align}\label{lili}
  \Li_\ga(z)\odot\Li_\gb(z)=\Li_{\ga+\gb}(z),
\end{align}
and, more generally, by \eqref{lixr},
\begin{align}\label{lilix}
  \Li_{\ga,r}(z)\odot\Li_{\gb,s}(z)=\Li_{\ga+\gb,r+s}(z).
\end{align}
We note also,  for any constant $c$ and 
power series $A(z)=\sumno a_nz^n$,
the trivial result
\begin{align}\label{lilic}
  c\odot A(z) = ca_0.
\end{align}

For our error terms, we will use the following lemma;
it is part of \cite[Lemma 12.2]{SJ359} and
taken from \cite[Propositions 9 and 10(i)]{FillFK}
and \cite[Theorem VI.11 p.~423]{FS}.
(Further related results are given in \cite{FillFK}, 
\cite[Section VI.10.2]{FS} and
\cite{SJ359}.) 

\begin{lemma}[\cite{FillFK,FS}]\label{LFFK}
  If $g(z)$ and $h(z)$ are \gda{}, then $g(z)\odot h(z)$ is \gda.
Moreover, suppose that
$g(z)=O(|1-z|^a)$ and $h(z)=O(|1-z|^b)$, where
$a$ and $b$ are real with $a+b+1\notin\NNo$; then, as $z\to1$ in a suitable
\GDD, 
\begin{align}
  \label{lffkO}
g(z)\odot h(z)=P(z)+\Oz{a+b+1},
\end{align}
for some $ P(z)\in\cP_{a+b+1}$.
\end{lemma}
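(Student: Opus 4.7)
My plan is to follow the classical template for singularity analysis of Hadamard products developed in \cite{FillFK} and \cite[Section~VI.10.2]{FS}. The starting point is the integral representation
\begin{align*}
g(z)\odot h(z) = \frac{1}{2\pi\ii}\oint_{|w|=r} g(w)\,h(z/w)\,\frac{\ddx w}{w},
\end{align*}
valid for $|z|$ small and $r$ chosen strictly between $|z|$ and the radii of analyticity; it is verified term-by-term by Cauchy's theorem, since $\frac{1}{2\pi\ii}\oint w^{n-m-1}\,\ddx w=\ett{n=m}$. Fixing a common $\gD$-domain on which both $g$ and $h$ are analytic, one propagates the identity to $z$ near $1$ by deforming the contour so that both $w$ and $z/w$ stay in the $\gD$-domain. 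When $z$ approaches $1$, the two potential singularities of the integrand---that of $g$ at $w=1$ and that of $h(z/w)$ at $w=z$---threaten to pinch the contour together, but they can be slid past each other through a carefully chosen deformation, which already yields \gda{} continuation of $g\odot h$.

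For the quantitative bound, I would split the analytically continued contour into three pieces: an outer arc of radius $1+\eta$ for some small fixed $\eta>0$, plus two Hankel-type loops of radius of order $|1-z|$ around the singularities at $w=1$ and $w=z$, joined by short segments. The outer arc contributes an error of order $(1+\eta)^{-n}$ coefficient-wise and is negligible compared to any power of $|1-z|$. On each Hankel loop the integrand is $O(|1-w|^a\,|1-z/w|^b)$, and the substitution $w=1+(1-z)u$ rescales the loop to a fixed Hankel contour in the $u$-plane with an overall prefactor of $|1-z|^{a+b+1}$; the remaining integral in $u$ converges absolutely to a constant, and the hypothesis $a+b+1\notin\NNo$ is precisely what ensures absolute convergence and excludes the resonance that would otherwise produce a spurious logarithm.

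The polynomial $P(z)\in\cP_{a+b+1}$ arises from two sources that are not controlled by the singular bound: residues picked up when the contour is slid across isolated points of the integrand in the deformation, and the Taylor polynomial of $g\odot h$ at $z=1$ of degree strictly less than $a+b+1$. When $a+b+1\le 0$ there are no such contributions and $\cP_{a+b+1}=\set{0}$. The main obstacle---and the reason this result requires care rather than being formal---is the uniform bookkeeping in the pinching step: one has to verify that the deformed contour can be chosen uniformly for $z$ in a suitable $\gD$-domain at $1$, and that the rescaling $w-1\sim(1-z)u$ remains uniform on the Hankel loops. Since this is exactly what is carried out in \cite[Propositions~9 and~10(i)]{FillFK} and \cite[Theorem~VI.11]{FS}, I would simply invoke their arguments rather than reproduce them.
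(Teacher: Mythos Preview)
The paper does not give its own proof of this lemma; it is stated purely as a citation of \cite[Propositions~9 and~10(i)]{FillFK} and \cite[Theorem~VI.11]{FS} (via \cite[Lemma~12.2]{SJ359}). Your sketch is precisely the argument found in those sources---the contour-integral representation of the Hadamard product, deformation past the pinching singularities at $w=1$ and $w=z$, and the local rescaling $w=1+(1-z)u$ to extract the $|1-z|^{a+b+1}$ behaviour---so there is nothing to compare: you have reproduced the cited proof that the paper invokes but omits.
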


\subsection{Generating functions for \GWt{s}}\label{SSGW}

Let $p_k:=\P(\xi=k)$ denote the values of the probability mass function
for the offspring distribution $\xi$,
and let $\Phi$ be its \pgf:
\begin{align}\label{Phi}
  \Phi(z):=\E z^\xi=\sumko p_k z^k.
\end{align}
Similarly, let $q_n:=\P(|\cT|=n)$, and let~$y$ denote the corresponding
\pgf:
\begin{align}\label{yz}
  y(z):=\E z^{|\cT|} = \sumn \P\bigpar{|\cT|=n}z^n
=\sumn q_n z^n.
\end{align}
As is well known, then
\begin{align}\label{Phiy}
  y(z) &
=z\Phi\yz
.\end{align}

Under our assumptions $\E\xi=1$ and $0<\Var\xi<\infty$,
the generating function
$y(z)$ extends analytically to a \GDD{} and is thus \gda;
see 
\cite[Lemma A.2]{SJ167}
and \cite[\S12.1]{SJ359}
(and under stronger  assumptions \cite[Theorem VI.6, p.~404]{FS}).
Furthermore, 
see again \cite[Lemma A.2]{SJ167},
there exists a \GDD{} where $|y(z)|<1$, and thus
$\Phi(y(z))$ is \gda, as well as 
$\Phi^{(m)}\bigpar{y(z)}$ for every $m\ge1$.

We note some useful consequence of our extra moment assumption \eqref{2+gd};
we may without loss of generality assume $\gd\le1$.
(Compare \cite[(12.5), (12.30), and (12.31)]{SJ359} 
without the assumption \eqref{2+gd} but with weaker error terms,
and
\cite[Theorem VI.6]{FS} with stronger results under stronger assumptions.)

\begin{lemma}\label{Lgdy}
  If \eqref{2+gd} holds with $0<\gd\le1$, then, for $z$ in some \GDD,
  \begin{align}\label{lgdy}
  y(z)&=1-\sqrt2 \gs\qw(1-z)\qq+\Oz{\frac12+\gdd},
\\\label{lgdy'}
  y'(z)&=2\qqw \gs\qw(1-z)\qqw+\Oz{-\frac12+\gdd}.
\\\label{lgdyy}
\frac{zy'(z)}{y(z)}&=2\qqw \gs\qw(1-z)\qqw+\Oz{-\frac12+\gdd},
  \end{align}
In particular, all three functions are \gda.
\end{lemma}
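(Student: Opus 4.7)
The plan is to propagate a sharpened singular expansion of $\Phi$ near $1$ through the functional equation (\ref{Phiy}); that $y$, $y'$, and $zy'/y$ are \gda{} is already supplied by the references cited just above the statement of the lemma, so the only new ingredient is the improved error terms coming from the moment assumption (\ref{2+gd}). The first step is to establish the local expansions
$\Phi(1-w) = 1 - w + \frac{\gss}{2} w^2 + O(|w|^{2+\gd})$ and $\Phi'(1-w) = 1 - \gss w + O(|w|^{1+\gd})$,
valid uniformly as $w \to 0$ in a sector $|\arg w| \le \gth_0 < \pi/2$. The crux is the bound
$\Phi''(1-w) - \gss = \sum_k k(k-1)p_k\bigpar{(1-w)^{k-2}-1} = O(|w|^\gd)$,
which follows from the sector-uniform estimate $|(1-w)^{k-2}-1| \le C\min\bigpar{1, k|w|} \le C(k|w|)^\gd$ (using $0 < \gd \le 1$) combined with the moment bound $\sum k^{2+\gd}p_k < \infty$; two integrations along the segment from $0$ to $w$ then yield the expansions of $\Phi'$ and $\Phi$.

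Second, set $s := 1-z$ and $u := 1-y(z)$, so that $u \to 0$ as $z\to 1$ within a \GDD{} in which $y$ is analytic. Plugging the expansion of $\Phi$ into the functional equation $1-u = (1-s)\Phi(1-u)$ and simplifying gives
$s = \tfrac{\gss}{2}u^2 + su - \tfrac{\gss}{2}su^2 + O(|u|^{2+\gd})$.
The leading balance $s \asymp u^2$ forces $u = \Theta(s^{1/2})$; the correction terms $su = O(s^{3/2})$, $su^2 = O(s^2)$, and $|u|^{2+\gd} = O(s^{1+\gdd})$ are then all absorbed into $O(s^{1+\gdd})$ (since $\gd \le 1$). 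Inverting the relation produces $u = \sqrt{2}\,\gs\qw(1-z)\qq + O(|1-z|^{1/2+\gdd})$, which is (\ref{lgdy}).

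Third, implicit differentiation of (\ref{Phiy}) gives $y'(z) = \Phi(y)/\bigpar{1-z\Phi'(y)} = (y/z)/\bigpar{1 - z\Phi'(y)}$. By (\ref{lgdy}) the numerator equals $1 + O(|1-z|^{1/2})$, while the expansion of $\Phi'$ from the first step yields
$1 - z\Phi'(y) = \gss u + s + O(|u|^{1+\gd}) = \gss u \bigpar{1 + O(|u|^\gd)}$,
the term $s = O(u^2)$ being absorbed because $\gd \le 1$. Dividing and substituting (\ref{lgdy}) for $u$ produces (\ref{lgdy'}). Finally, $zy'/y = y'(z)(1-s)/(1-u) = y'(z)\bigpar{1 + O(|1-z|^{1/2})}$, and since $y'(z) = O(|1-z|^{-1/2})$ the perturbation is merely $O(1)$, which fits inside the asserted error $O(|1-z|^{-1/2+\gdd})$; this gives (\ref{lgdyy}).

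The main obstacle will be the first step, the sector-uniform expansion of $\Phi$ under only a fractional moment assumption: the classical Taylor-with-remainder argument would demand $\E\xi^3<\infty$, so instead one must rely on the pointwise estimate $|(1-w)^{k-2}-1| \le C(k|w|)^\gd$ in a complex angular domain, followed by integration along the straight segment from $0$ to $w$. Once that ingredient is in place, the remaining steps amount to routine algebraic bookkeeping inside the \GDD.
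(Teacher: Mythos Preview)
Your argument is correct and essentially self-contained, but it differs from the paper's proof in two places.

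For \eqref{lgdy}, the paper simply cites \cite[Lemma~12.15]{SJ359}, whereas you re-derive the expansion of $\Phi$ under the moment hypothesis (this is essentially what the paper does separately in \refL{LZ0}, equation~\eqref{lgd}, though there the bound is stated for $|z|\le1$ rather than in a sector) and then solve the functional equation. One minor point worth making explicit in your write-up: before you can insert the sector expansion of $\Phi$ at $1-u$, you need to know that $u=1-y(z)$ actually lies in such a sector. This follows from the weaker a~priori estimate $1-y(z)\sim\sqrt2\,\gs\qw(1-z)\qq$ available from \cite[Lemma~A.2]{SJ167} (cited just above the lemma), which also cleanly justifies your ``leading balance'' step $u=\Theta(s^{1/2})$ without a separate bootstrap.

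For \eqref{lgdy'}, the paper avoids any new computation with $\Phi'$: it simply differentiates \eqref{lgdy} term by term, controlling the derivative of the $O$-remainder via Cauchy's estimate on a disc of radius $c|1-z|$ centred at $z$ (\cite[Theorem~VI.8]{FS}). Your implicit-differentiation route works equally well but requires the separate expansion of $\Phi'$. The paper's approach is shorter and more modular---once one \gda{} expansion is in hand, derivatives come for free---while yours has the merit of showing directly how the error term propagates through the functional equation. The final step \eqref{lgdyy} is handled the same way in both proofs.
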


\begin{proof}
That $y(z)$ is \gda{} was noted above, and
the estimate \eqref{lgdy} was shown in \cite[Lemma 12.15]{SJ359}.
A differentiation then yields \eqref{lgdy'} in a smaller \GDD,
using Cauchy's estimates for a disc with radius
$c|1-z|$ centered at $z$
(see \cite[Theorem VI.8 p.~419]{FS}).

Note that $zy'(z)/y(z)$ is analytic in any domain where $y$ is 
defined and analytic with $|y(z)|<1$, 
since then \eqref{Phiy} holds in the domain and implies that $y(z)\neq0$ for
$z\neq0$, and also that $z/y(z)$ is analytic at $z=0$.
Hence, also $zy'(z)/y(z)$ is \gda.
Finally,  \eqref{lgdyy} follows from \eqref{lgdy} and \eqref{lgdy'}.
\end{proof}

By \eqref{lia},  and using $\gG(-1/2)=-2\sqrt\pi$, we can rewrite \eqref{lgdy} as
\begin{align}\label{yli}
    y(z)&=-\frac{\sqrt2}{\gG(-\frac12) \gs}\Li_{3/2}(z)+c+\Oz{\frac12+\gdd}
\notag\\&
=\frac1{\sqrt{2\pi}\gs}\Li_{3/2}(z)+c+\Oz{\frac12+\gdd},
\end{align}
where 
(although we do not need it)
$c=1-\zeta(3/2)/\sqrt{2\pi\gss}$.
Furthermore, by \eqref{lix} and
singularity analysis
\cite[Theorem VI.3, p.~390]{FS},
\eqref{yli} implies 
\begin{align}
  \label{qn}
q_n=\P(|\cT|=n)= \frac{1}{\sqrt{2\pi}\gs}n^{-3/2}+\On{-\frac32-\gdd}
= \frac{1+\On{-\gd / 2}}{\sqrt{2\pi}\gs}n^{-3/2}
.\end{align}

\begin{remark}
It is well known that  the  asymptotic formula 
\begin{align}
\label{qno}
q_n=\P(\cT|=n)= \frac{1+o(1)}{\sqrt{2\pi}\gs}n^{-3/2}
\qquad\text{as \ntoo},
\end{align}
with a weaker error bound than \eqref{qn},
holds assuming only
$\Var\xi<\infty$ (and $\E\xi=1$);
see \eg{}
\cite{Otter} (assuming an exponential moment),
\cite[Lemma 2.1.4]{Kolchin} or 
\cite[Theorem 18.11]{SJ264}
(with $\tau=\Phi(\tau)=1$)
and the further references given there.
\end{remark}

\begin{lemma}\label{LZ0}
Assume \eqref{2+gd} with $0<\gd\le1$. 
Then, for $z$ in some \GDD,
\begin{align} \label{lz00}
  \Phi\bigpar{y(z)}&=1 + \Oz{\frac12},
\\\label{lz0a}
  \Phi'\bigpar{y(z)}&=1 + \Oz{\frac12},
\\\label{lz0b}
  \Phi''\bigpar{y(z)}&= \gss + \Oz{\gdd},
\intertext{and, for each fixed $m\ge3$,}
\label{lz0c}
  \Phi^{(m)}\bigpar{y(z)}&= \Oz{\gdd+1-\frac{m}2} 
.\end{align}
\end{lemma}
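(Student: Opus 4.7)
The plan is to derive Taylor expansions of $\Phi$ around $w=1$ from the moment assumption \eqref{2+gd}, and then substitute $w = y(z)$ using the estimate in \refL{Lgdy}.

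First, I would establish a \Holder{} estimate for $\Phi''$ at $w=1$. For $|w|\le 1$, the elementary interpolation $|w^n - 1| \le \min(2, n|1-w|) \le Cn^\gd |1-w|^\gd$, combined with $\Phi''(w) - \gss = \sum_k k(k-1) p_k (w^{k-2} - 1)$ and $\E \xi^{2+\gd} < \infty$, yields
\[
\Phi''(w) = \gss + O\bigpar{|1-w|^\gd}
\]
uniformly for $|w|\le 1$. Integrating along the segment from $1$ to $w$ then gives
\begin{align*}
\Phi'(w) &= 1 + \gss(w-1) + O\bigpar{|1-w|^{1+\gd}},\\
\Phi(w)  &= 1 + (w-1) + \tfrac{\gss}{2}(w-1)^2 + O\bigpar{|1-w|^{2+\gd}}.
\end{align*}

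Next, I would substitute $w = y(z)$. From \refL{Lgdy}, $|1-y(z)| = O(|1-z|^{1/2})$, so plugging in immediately yields \eqref{lz0a} and \eqref{lz0b}. For \eqref{lz00} it is simpler to use the functional equation \eqref{Phiy}, namely $\Phi(y(z)) = y(z)/z$; combined with \eqref{lgdy} and $1/z = 1 + O(|1-z|)$ this gives $\Phi(y(z)) = 1 + O(|1-z|^{1/2})$.

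For the higher-derivative bound \eqref{lz0c}, I would apply Cauchy's integral formula on a circle of radius $r = c|1-y(z)|$ centered at $y(z)$; for $m\ge 3$ one may subtract the constant $\gss$ from the integrand since $\oint (w-y(z))^{-(m-1)}\,\ddx w = 0$. Using the \Holder{} estimate from the first step on the contour gives
\[
\bigabs{\Phi^{(m)}(y(z))} \le C r^\gd \cdot r^{-(m-2)} = O\bigpar{|1-y(z)|^{\gd+2-m}} = O\bigpar{|1-z|^{\gd/2 + 1 - m/2}},
\]
which is precisely \eqref{lz0c}. The main technical obstacle is choosing $c$ so that the Cauchy disk stays inside $|w| < 1$ uniformly over $z$ in a suitable \GDD{}; this requires $\Re(1-y(z))$ to be bounded below by a constant multiple of $|1-z|^{1/2}$, which follows from \eqref{lgdy} together with the fact that $(1-z)^{1/2}$ stays in a half-plane bounded away from the imaginary axis when $z$ ranges over a \GDD{}. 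Once this geometric verification is done, the remaining estimates are routine.
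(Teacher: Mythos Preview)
Your proof is correct and follows essentially the same approach as the paper: derive Taylor/H\"older estimates for $\Phi$ and its derivatives near $w=1$ from the moment assumption, then substitute $w=y(z)$ using \refL{Lgdy}, with Cauchy estimates on a disk of radius comparable to $|1-y(z)|$ for the higher derivatives. The paper differs only in organization---it cites the expansion $\Phi(w)=w+\tfrac{\gss}{2}(1-w)^2+O(|1-w|^{2+\gd})$ from \cite[Lemma 12.14]{SJ359} and differentiates that via Cauchy (radius $(1-|w|)/2$) rather than establishing the H\"older bound on $\Phi''$ directly and working outward; the geometric input (your lower bound on $\Re(1-y(z))$) appears in the paper as the two-sided estimate $c|1-z|^{1/2}\le 1-|y(z)|\le|1-y(z)|\le C|1-z|^{1/2}$.
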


\begin{proof}
The assumption \eqref{2+gd} implies the estimate, see \eg{} 
\cite[Lemma 12.14]{SJ359},
  \begin{align}\label{lgd}
    \Phi(z)=z+\tfrac12\gss (1-z)^2 + O\bigpar{|1-z|^{2+\gd}}, 
\qquad |z|\le1.
  \end{align}
By differentiation of \eqref{lgd}, for the remainder term
using Cauchy's estimates 
for a disc with radius $(1-|z|)/2$ centered at $z$, 
we obtain for all $z$ with $|z|<1$,
and each fixed $m\ge3$,
\begin{align}\label{za}
  \Phi'(z)&=1 - \gss(1-z) + O\bigpar{|1-z|^{2+\gd}/(1-|z|)}, 
\\\label{zb}
  \Phi''(z)&= \gss + O\bigpar{|1-z|^{2+\gd}/(1-|z|)^2},
\\\label{zc}
  \Phi^{(m)}(z)&= O\bigpar{|1-z|^{2+\gd}/(1-|z|)^m}.
\end{align}

For $z$ in a suitable \GDD{}  we have \eqref{lgdy}, 
and as a consequence, if $|1-z|$ is small enough,
  \begin{align}\label{zy}
c|1-z|\qq \le 1-|y(z)|   \le |1-y(z)| \le C|1-z|\qq.
  \end{align}
The result follows by \eqref{lgd}--\eqref{zy}.
\end{proof}

\begin{remark}
In fact, \eqref{lz0a} holds without the extra assumption \eqref{2+gd},
assuming only $\E\xi^2<\infty$, 
because then $\Phi$ is twice continuously
differentiable in the closed unit disc with $\Phi'(1)=1$, and
$y(z)=1-\sqrt2\gs\qw(1-z)\qq+o(|1-z|\qq)$
as is shown in \cite[Lemma A.2]{SJ167}, see also \cite[(12.5)]{SJ359}.
\end{remark}

\section{The shape functional}\label{S:shape}

\ccreset

We consider here the shape functional $X_n'(0)$.
Asymptotics for the mean and variance were found by \citet{Fill96} in the
case of uniform binary trees [the case $\xi\sim\Bi(2,\frac12)$];
this was generalized by \citet{MM98} to simply generated trees under a
condition equivalent to our \cGWt{s} with $\xi$ having a finite exponential 
moment $\E e^{r\xi}<\infty$ for some $r>0$.
\citet{Pittel} showed asymptotic normality in the case of uniform labelled
trees [the case $\xi\sim\Po(1)$] by estimating cumulants.
\citet{FillK04} considered uniform binary trees 
[$\xi\sim\Bi(2,\frac12)$] and showed asymptotic normality 
by estimating moments by singularity analysis, see also
\citet{FillFK}.
Asymptotic normality has recently been shown, by similar methods, also
for uniformly random ordered trees 
[the case $\xi\sim\Ge(\half)$] by \citet{Caracciolo20}, 
who further [personal communication]
have extended the results to arbitrary offspring distributions $\xi$ (with
$\E\xi=1$ as here), 
at least provided  that $\xi$ has a finite exponential moment 
$\E e^{r\xi}<\infty$ for some $r>0$.

We will here extend these results to any offspring distribution $\xi$
satisfying the standard condition $\E\xi=1$ and 
the weak moment condition \eqref{2+gd} for some $\gd>0$. 
We assume without loss of generality that $0<\gd\le1$.
We will use singularity analysis to estimate moments, in the same
way as \cite{FillK04,FillFK,Caracciolo20}.

In this section, we define
(corresponding to \cite[(12.46)]{SJ359})
\begin{align}\label{xb}
  b_n:=\log n -\muu, \qquad n\ge1,
\end{align}
where $\muu
=\E \log|\cT|
= \sumn q_n\log n
$ as in \eqref{xd},
and 
we let $F$ be the additive functional defined by the toll function
$f(T):=b_{|T|}$. Thus, by \eqref{r'},
\begin{align}\label{xc}
  F(\ctn)=X_n'(0)-\muu n
.\end{align}

The generating function of $b_n$ is, 
by \eqref{lix}--\eqref{lixr} 
and noting  $\Li_0(z)=z/(1-z)$,
\begin{align}\label{xe}
  B(z) = \sumn(\log n-\muu)z^n 
= \Li_{0,1}(z)-\muu\Li_0(z)
.\end{align}
Hence, by \refL{Liar}
(or \cite[Figure VI.11, p.~410]{FS}  with more  terms),
\begin{align}\label{Bz1}
B(z) &= (1-z)\qw L(z) -c(1-z)\qw+\Oz{-\eps}
\\\label{Bz2}
&=\Oz{-1-\eps}.
\end{align}

We define the generating functions, for $\ell\ge1$,
\begin{align}\label{Mell}
  M_\ell(z):=\E \bigsqpar{F(\cT)^\ell z^{|\cT|}}
=\sumn q_n \E[F(\tn)^\ell] z^n
.\end{align}
These generating functions  can be calculated 
recursively
by the following formula (valid for any sequence $b_n$) 
from \cite{SJ359}.

\begin{lemma}[{\cite[Lemma 12.4]{SJ359}}]
  \label{LH}
For every $\ell\ge1$,
\begin{align}\label{lh}
  M_\ell(z)
=
\frac{z y'(z)}{y(z)}
\summl \frac{1}{m!}\sumxx \binom{\ell}{\ell_0,\dots,\ell_m}
B(z)^{\odot\ell_0} \odot 
\bigsqpar{zM_{\ell_1}(z)\dotsm M_{\ell_m}(z)\Phi\xxm\yz},
\end{align}
where $\sumxx$ is the sum over all $(m+1)$-tuples $(\ell_0,\dots,\ell_m)$ of
non-negative integers summing to $\ell$ such that 
$1\le\ell_1,\dots,\ell_m<\ell$.
\end{lemma}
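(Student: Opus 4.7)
The plan is to exploit the branching decomposition of $\cT$. Conditional on $\xi=k$, the root's $k$ subtrees $\cT^{(1)},\dots,\cT^{(k)}$ are iid copies of $\cT$, and
\begin{align*}
|\cT|=1+\sum_{i=1}^k|\cT^{(i)}|,\qquad F(\cT)=b_{|\cT|}+\sum_{i=1}^\xi F\bigpar{\cT^{(i)}}.
\end{align*}
First I would raise the second identity to the $\ell$th power using the multinomial theorem, multiply through by $z^{|\cT|}=z\prod_{i=1}^\xi z^{|\cT^{(i)}|}$, and take expectations. Reorganizing the resulting sum by the number $m$ of children whose exponent is strictly positive (there are $\binom{k}{m}$ ways to choose which ones), and calling the corresponding positive exponents $\ell_1,\dots,\ell_m$, conditional independence of the children contributes a factor $\prod_{j=1}^m M_{\ell_j}(z)$ from the active children and $y(z)^{k-m}$ from the inert ones.

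The factor $b_{|\cT|}^{\ell_0}$ couples the subtrees through the total size $|\cT|$; this is where the Hadamard product enters. For any random variable $X$ one has
\begin{align*}
\E\bigsqpar{b_{|\cT|}^{\ell_0}\,X\,z^{|\cT|}}=\sum_n b_n^{\ell_0}\,\E\bigsqpar{X\ett{|\cT|=n}}z^n=B(z)^{\odot\ell_0}\odot\E\bigsqpar{X z^{|\cT|}}.
\end{align*}
Applying this and then summing on $k$ with weights $p_k$, via the identity $\sum_k p_k\binom{k}{m}y^{k-m}=\Phi^{(m)}\yz/m!$, produces
\begin{align*}
M_\ell(z)=\sum_{m=0}^\ell\frac{1}{m!}\sumx\binom{\ell}{\ell_0,\dots,\ell_m}B(z)^{\odot\ell_0}\odot\bigsqpar{zM_{\ell_1}(z)\dotsm M_{\ell_m}(z)\Phi^{(m)}\yz},
\end{align*}
where $\sumx$ runs over $\ell_0\ge0$ and $\ell_1,\dots,\ell_m\ge1$ summing to $\ell$.

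The final (and crucial) step is to isolate the lone term on the right in which $M_\ell$ itself appears, namely $m=1$, $\ell_0=0$, $\ell_1=\ell$; this term contributes $zM_\ell(z)\Phi'\yz$. Transposing it to the left shows that $\bigpar{1-z\Phi'\yz}M_\ell(z)$ equals the same expression but with the extra restriction $\ell_1,\dots,\ell_m<\ell$, which is precisely the sum $\sumxx$ in the statement. Differentiating the defining equation $y(z)=z\Phi\yz$ gives $y'(z)=\Phi\yz+z\Phi'\yz y'(z)$, hence $1-z\Phi'\yz=\Phi\yz/y'(z)=y(z)/\bigsqpar{zy'(z)}$. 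Multiplying through by $zy'(z)/y(z)$ then delivers \eqref{lh}.

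I expect the only real difficulty to be bookkeeping: correctly combining the $\binom{k}{m}$ positional choices with the multinomial coefficient on the exponents $(\ell_0,\ell_1,\dots,\ell_m)$, and fixing a convention for $B(z)^{\odot 0}$ so that the Hadamard product acts as the identity on power series without constant term (which applies to every $M_\ell(z)$, since $|\cT|\ge1$). Once the algebra is set up, the only analytic input is the tree functional equation $y(z)=z\Phi\yz$.
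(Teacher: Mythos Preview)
Your proposal is correct and follows precisely the natural branching-decomposition argument that underlies \cite[Lemma 12.4]{SJ359}; the paper itself gives no proof here but simply cites that reference. The only point worth making explicit is your convention that $B(z)^{\odot 0}\odot A(z)=A(z)$ for power series $A$ without constant term, and your observation that after extracting the single term $m=1$, $\ell_0=0$, $\ell_1=\ell$, every remaining tuple in the sum automatically satisfies $\ell_j<\ell$ for $j\ge1$ (since for $m\ge2$ the $\ell_j\ge1$ already force this, and for $m=1$ with $\ell_0\ge1$ one has $\ell_1=\ell-\ell_0<\ell$).
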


Note that $B(z)$ is \gda{} by \eqref{xe};
furthermore, $zy'(z)/y(z)$ and $\Phi^{(m)}\bigpar{y(z)}$
are also \gda, see \refSS{SSGW}. 
Hence, 
\eqref{lh} and induction using \refL{LFFK}
show that every $M_\ell(z)$ is \gda.
  
It will be convenient to denote the sum in \eqref{lh} by $R_\ell(z)$.
Thus,
\begin{align}\label{lh6''}
  M_\ell(z)=\frac{zy'(z)}{y(z)}R_\ell(z).
\end{align}

\subsection{The mean}

We begin with the mean $\E X'_n(0)$ and the corresponding generating
function $M_1(z)$.
The following result includes 
earlier results for special cases in 
\cite{Fill96,MM98,Pittel,FillK04,FillFK,Caracciolo20}, 
but our error term is weaker 
[since we have the weaker moment assumption \eqref{2+gd}].
Recall that $\psi(z):=\gG'(z)/\gG(z)$, and note that
\begin{align}\label{psi-1/2}
 \psi(-\tfrac12)=\psi(\tfrac12)+2=-\gam-2\log 2+2, 
\end{align}
see \cite[5.5.2 and 5.4.13]{NIST}.

\begin{lemma}\label{LZ1}
Assume \eqref{2+gd} with $0<\gd\le1$.
Then, for any $\eps>0$,
\begin{align}\label{lz1a}
  M_1(z) = 
-\gs\qww L(z) 
+\frac{{\muu-\psi(-\frac12)}}{\gss} 
+ \Oz{\gdd-\eps}
\end{align}
and
\begin{align}\label{lz1b}
  \E X'_n(0)= 
\muu n - \frac{\sqrt{2\pi}}{\gs} n\qq + \On{\frac12-\gdd+\eps}.  
\end{align}
\end{lemma}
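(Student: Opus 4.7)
The plan is to apply \refL{LH} with $\ell=1$ and then singularity-analyze $M_1(z)$. For $\ell=1$ the constraint $1\le\ell_i<1$ for $i\ge 1$ is satisfiable only when $m=0$, so the only tuple is $(\ell_0)=(1)$, and \eqref{lh} collapses to
\begin{equation*}
M_1(z) = \frac{zy'(z)}{y(z)}\bigl(B(z)\odot [z\Phi(y(z))]\bigr) = \frac{zy'(z)}{y(z)}\bigl(B(z)\odot y(z)\bigr),
\end{equation*}
using \eqref{Phiy}.

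The main task is to obtain a singular expansion of $B(z)\odot y(z)$ at $z=1$. I would write $B(z) = \Li_{0,1}(z) - \muu\Li_0(z)$ from \eqref{xe} and split $y(z) = \tfrac{1}{\sqrt{2\pi}\gs}\Li_{3/2}(z) + y_1(z)$, where $y_1(z)=c+O((1-z)^{1/2+\gd/2})$ near $z=1$ by \eqref{yli}. Since $B(z)$ has no constant coefficient, $B(z)\odot c = 0$ by \eqref{lilic}; the Hadamard product of $B(z)=O((1-z)^{-1-\eps})$ (from \eqref{Bz2}) with the residual $O((1-z)^{1/2+\gd/2})$ part of $y_1$ contributes, by \refL{LFFK}, a constant plus $O((1-z)^{1/2+\gd/2-\eps})$. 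The dominant piece is, by \eqref{lilix},
\begin{equation*}
\frac{1}{\sqrt{2\pi}\gs}\bigl[\Li_{3/2,1}(z) - \muu\Li_{3/2}(z)\bigr].
\end{equation*}
Applying \refL{Liar} (extended to $\ga=3/2$ via the remark following it), with $\gG(-1/2)=-2\sqrt\pi$ and $\gG'(-1/2)=-2\sqrt\pi\,\psi(-1/2)$ (the latter obtained by $\partial_\ga$-differentiating the leading singular term $\gG(1-\ga)(1-z)^{\ga-1}$ of $\Li_\ga(z)$, since $\Li_{\ga,1}=-\partial_\ga\Li_\ga$), I would arrive at
\begin{equation*}
B(z)\odot y(z) = -\tfrac{\sqrt 2}{\gs}(1-z)^{1/2}L(z) + \tfrac{\sqrt 2}{\gs}\bigl[\muu-\psi(-1/2)\bigr](1-z)^{1/2} + O((1-z)^{1/2+\gd/2-\eps}),
\end{equation*}
the constant term in the expansion vanishing because $\sum_n q_n(\log n-\muu)=0$ by the definition of $\muu$, which forces the limiting value as $z\to 1^-$ (and hence the constant in the singular expansion) to be zero.

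Multiplying by $\frac{zy'(z)}{y(z)}=\tfrac{1}{\sqrt 2\,\gs}(1-z)^{-1/2} + O((1-z)^{-1/2+\gd/2})$ from \eqref{lgdyy}, the leading product is $-\gs^{-2}L(z)$ and the next-order product is exactly $(\muu-\psi(-1/2))/\gss$, establishing \eqref{lz1a} with residual $O((1-z)^{\gd/2-\eps})$. Finally, \eqref{lz1b} follows by standard singularity analysis applied to \eqref{lz1a}: $[z^n]M_1(z) = -\gs^{-2}/n + O(n^{-1-\gd/2+\eps})$, and since $[z^n]M_1(z)=q_n\E F(\tn)$ by \eqref{Mell}, dividing by $q_n$ via \eqref{qn} gives $\E F(\tn) = -\sqrt{2\pi}\gs^{-1}n^{1/2} + O(n^{1/2-\gd/2+\eps})$; then \eqref{xc} completes the proof. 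The main obstacle will be the careful bookkeeping in the Hadamard product calculation—in particular verifying the vanishing of the constant in $B(z)\odot y(z)$ and assembling the exact constant $(\muu-\psi(-1/2))/\gss$ from the $(1-z)^{1/2}$ subdominant coefficients involving $\gG'(-1/2)$.
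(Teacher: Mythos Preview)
Your proposal is correct and follows essentially the same route as the paper's proof: reduce \eqref{lh} at $\ell=1$ to $M_1(z)=\tfrac{zy'(z)}{y(z)}(B(z)\odot y(z))$, expand $B(z)\odot y(z)$ via \eqref{yli}, \eqref{lilix}, and \refL{LFFK}, kill the constant using $(B\odot y)(1)=0$, multiply by \eqref{lgdyy}, and finish with singularity analysis and \eqref{qn}. The only cosmetic difference is that you obtain the subleading coefficient of $\Li_{3/2,1}$ by differentiating $\gG(1-\ga)(1-z)^{\ga-1}$ in~$\ga$, whereas the paper cites the known expansion \eqref{xj} directly; both yield $\gG'(-\tfrac12)=\gG(-\tfrac12)\psi(-\tfrac12)$ and the same constant $(\muu-\psi(-\tfrac12))/\gss$.
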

\begin{proof}
For $\ell=1$, the sums in \eqref{lh} reduce to a single term with $m=0$ and
$\ell_0=1$, and thus, 
as in \cite[(12.29)]{SJ359},
using \eqref{Phiy},
\begin{align}\label{xsx1}
  M_1(z)
=\frac{zy'(z)}{y(z)}\cdot\bigpar{ B(z)\odot z\Phi\yz}
=\frac{zy'(z)}{y(z)}\cdot\bigpar{ B(z)\odot y(z)}
.\end{align}

 By \eqref{lilix}, \eqref{lilic}, \eqref{xe} and  \eqref{yli} we obtain, 
using \refL{LFFK} and \eqref{Bz2} for the error term,
\begin{align}\label{xg}
  B(z)\odot y(z)
= 
\frac1{\sqrt{2\pi}\gs}\Li_{3/2,1}(z)
-\frac{\muu}{\sqrt{2\pi}\gs}\Li_{3/2}(z)
+\cc+\Oz{\frac12+\gdd-\eps}.
\end{align}
Further, by our choice \eqref{xd} of  $\muu$,
\begin{align}\label{xh}
  (B\odot y)(1) = \sumn b_n q_n 
=\sumn q_n(\log n -\muu) = \muu-\muu=0.
\end{align}
By \eqref{lia},
we have
\begin{align}\label{xi}
  \Li_{3/2}(z) = \gG(-\tfrac12)(1-z)\qq  + \cc + \Oz{}.
\end{align}
Moreover,
by  \cite[Theorem VI.7, p.~408]{FS} (or \cite[Theorem 1]{Flajolet1999}),
\begin{align}\label{xj}
  \Li_{3/2,1}(z) = \gG(-\tfrac12)(1-z)\qq L(z) + \gG'(-\tfrac12)(1-z)\qq + 
\cc + \Oz{}
.\end{align}
Hence, \eqref{xg} and \eqref{xi}--\eqref{xj} yield, 
using \eqref{xh} to see that the constant terms
cancel,
\begin{align}\label{xk}
  B(z)\odot y(z) &= 
\frac{\gG(-\frac12)}{\sqrt{2\pi}\gs}(1-z)\qq L(z) + 
\frac{\gG'(-\frac12)-\muu\gG(-\frac12)}{\sqrt{2\pi}\gs}(1-z)\qq 
+ \Oz{\frac12+\gdd-\eps}
\notag\\&
=-\frac{\sqrt{2}}{\gs}(1-z)\qq L(z) 
+\frac{\sqrt{2}\bigpar{\muu-\psi(-\frac12)}}{\gs} (1-z)\qq 
+ \Oz{\frac12+\gdd-\eps}
.\end{align}
 Finally, \eqref{xsx1}, \eqref{lgdyy}, and \eqref{xk} yield \eqref{lz1a}.

Since $L(z)=\sumn z^n/n$, \eqref{lz1a} yields by standard singularity analysis,
recalling the definition \eqref{Mell},
\begin{align}\label{xn}
q_n \E F(\ctn) =-\gs\qww n\qw +\On{-1-\gdd+\eps}.
\end{align}
Hence, using also \eqref{qn},
\begin{align}\label{xp}
  \E F(\ctn)=-\frac{\sqrt{2\pi}}{\gs}n\qq+\On{\frac12-\gdd+\eps}
\end{align}
and \eqref{lz1b} follows by \eqref{xc}.
\end{proof}

\begin{remark}
Under  stronger moment conditions on the offspring distribution $\xi$,
we may in the same way obtain an expansion of the mean $\E X_n'(0)$ with
further terms.
For example, if $\E \xi^{3+\gd}<\infty$, 
then the same argument yields
\begin{align}\label{lz1b++}
  \E X'_n(0)= 
\muu n - \frac{\sqrt{2\pi}}{\gs} n\qq 
+ \frac{\E[\xi(\xi-1)(\xi-2)]}{3\gs^4} \log n 
+O(1).
\end{align}
In the special case of binary trees,
this was given in \cite[(4.2)]{FillK04}.
Note that the coefficient of $\log n$ in \eqref{lz1b++} vanishes for
binary trees, but not in general.
\end{remark}

\subsection{The second moment}

\begin{lemma}\label{LZ2}
Assume \eqref{2+gd} with $0<\gd\le1$.
Then, for any  $\eps>0$,
\begin{align}\label{lz2}
M_2(z) &= 
2^{3/2}(1-\log 2)\gs^{-3} (1-z)\qqw L(z)
+\cc(1-z)\qqw + \Oz{-\frac12+\gdd-\eps}
.\end{align}
\end{lemma}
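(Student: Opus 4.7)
The plan is to apply the recursion \refL{LH} with $\ell = 2$. The constraint on $(\ell_0,\dots,\ell_m)$ in $\sumxx$ admits exactly three tuples, $(\ell_0)=(2)$, $(\ell_0,\ell_1)=(1,1)$, and $(\ell_0,\ell_1,\ell_2)=(0,1,1)$, yielding
\begin{align*}
M_2(z) = \frac{zy'(z)}{y(z)}\Bigcpar{B(z)^{\odot 2}\odot y(z) + 2\,B(z)\odot\bigsqpar{zM_1(z)\Phi'\bigpar{y(z)}} + zM_1(z)^2\,\Phi''\bigpar{y(z)}}.
\end{align*}
Into this I substitute the expansions already established: $M_1(z)$ from \refL{LZ1}; $\Phi^{(m)}\bigpar{y(z)}$ from \refL{LZ0}; $y(z)$ from \eqref{yli}; $\frac{zy'(z)}{y(z)}$ from \eqref{lgdyy}; and $B(z)=\Li_{0,1}(z)-\muu\Li_0(z)$ directly. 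The Hadamard products are handled by passing to generalized polylogarithms via \refL{Liar}, using multiplicativity \eqref{lilix}, and bounding error terms with \refL{LFFK}.

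The key structural observation is that the $m=1$ and $m=2$ pieces each separately generate a leading singular contribution of size $(1-z)^{-1/2}L(z)^2$, and these two leading pieces cancel exactly. For $m=2$, since $M_1(z)^2\Phi''\bigpar{y(z)}\sim \gs^{-2}L(z)^2$, the contribution is $+\tfrac{1}{\sqrt 2\,\gs^3}(1-z)^{-1/2}L(z)^2$; for $m=1$, using $B(z)\odot\Li_1(z)=\Li_{1,1}(z)-\muu\Li_1(z)=\tfrac12 L(z)^2-(\gam+\muu)L(z)+O(1)$ and then multiplying by $2\cdot\tfrac{zy'(z)}{y(z)}$, the contribution is $-\tfrac{1}{\sqrt 2\,\gs^3}(1-z)^{-1/2}L(z)^2$. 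The $m=0$ piece $B(z)^{\odot 2}\odot y(z)$ only generates singularities of the form $(1-z)^{1/2}L(z)^j$, which after multiplication by $\tfrac{zy'(z)}{y(z)}$ yield purely logarithmic terms $O(L(z)^2)$, absorbed into the error $O(|1-z|^{-1/2+\gdd-\eps})$.

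With this cancellation in hand, the surviving leading singularity is $(1-z)^{-1/2}L(z)$, and its coefficient is collected from the subleading $L(z)$ pieces of the $m=1$ and $m=2$ contributions. The $m=2$ piece contributes $-\sqrt 2\,(\muu-\psi(-\tfrac12))/\gs^3$ through the constant $(\muu-\psi(-\tfrac12))/\gss$ in \eqref{lz1a}; the $m=1$ piece contributes $+\sqrt 2\,(\gam+\muu)/\gs^3$ through the $-\gam L(z)$ term of $\Li_{1,1}$. Summing and invoking \eqref{psi-1/2} to obtain $\gam+\psi(-\tfrac12)=2(1-\log 2)$ yields the claimed coefficient $2^{3/2}(1-\log 2)\gs^{-3}$; all further $(1-z)^{-1/2}$ contributions are absorbed into the unspecified constant $c$ of \eqref{lz2}.

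The main technical obstacle is the appearance of the borderline generalized polylogarithm $\Li_{1,1}(z)$, to which \refL{Liar} does not directly apply (the lemma excludes $\ga\in\set{1,2,\dots}$). Its singular expansion $\Li_{1,1}(z)=\tfrac12 L(z)^2-\gam L(z)+O(1)$ must be supplied separately, either from \cite[Theorem~1]{Flajolet1999} or by direct coefficient comparison against $[z^n]\tfrac12 L(z)^2=H_{n-1}/n=(\log n+\gam+O(1/n))/n$. The error bound in \eqref{lz2} follows from the worst of the subleading errors---the $O(|1-z|^{\gdd-\eps})$ remainder in $M_1$, the $O(|1-z|^{\gdd})$ remainder of $\Phi''\bigpar{y(z)}$ from \eqref{lz0b}, and the $O(|1-z|^{-1/2+\gdd})$ remainder of $\tfrac{zy'(z)}{y(z)}$---processed through \refL{LFFK}.
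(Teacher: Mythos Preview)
Your proof is correct and follows essentially the same route as the paper: both apply \refL{LH} at $\ell=2$, expand the three terms via \refL{LZ0}, \refL{LZ1}, and the polylogarithm expansions, observe the exact cancellation of the $(1-z)^{-1/2}L(z)^2$ contributions between the $m=1$ and $m=2$ pieces, and extract the surviving $(1-z)^{-1/2}L(z)$ coefficient using $\gam+\psi(-\tfrac12)=2(1-\log 2)$. The only difference is cosmetic: the paper first assembles $R_2(z)$ and then multiplies by $zy'(z)/y(z)$, whereas you track contributions to $M_2(z)$ directly. One small imprecision worth noting: the $m=0$ piece $B^{\odot 2}\odot y(z)$ also carries a genuine constant term (from the constants $c_r$ in the $\Li_{3/2,r}$ expansions), which after multiplication by $zy'(z)/y(z)$ contributes to the unspecified $c(1-z)^{-1/2}$ in \eqref{lz2}; your phrase ``purely logarithmic terms'' slightly understates this, but it is harmless since that constant is not tracked anyway.
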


\begin{proof}
We use \refL{LH} with the notation $R_\ell(z)$ as in \eqref{lh6''}.
For $\ell=2$, \refL{LH}  shows, using \eqref{Phiy}, that
\begin{align}\label{y11}
  R_2(z)&=
B(z)^{\odot2}\odot y(z)
+2B(z)\odot\bigsqpar{zM_1(z)\Phi'\xpar{y(z)}}
+zM_1(z)^2\Phi''\bigpar{y(z)}.
\end{align}
We consider the three terms separately.

First, by \eqref{Bz2}, \eqref{lgdy} and \refL{LFFK} (twice), we have
\begin{align}\label{y12}
  B(z)^{\odot2}\odot y(z)
= B(z)^{\odot2}\odot \bigpar{y(z)-1}
=\cc + \Oz{\frac12-2\eps}.
\end{align}

For the remaining two terms, we have to be more careful, since it will turn
out that their main terms cancel.

For the second term, 
we note first that  \eqref{lz1a} implies $M_1(z)=\Oz{-\eps}$,
and thus \eqref{lz0a} yields 
\begin{align}\label{y3+}
  zM_1(z)\Phi'\xpar{y(z)}=M_1(z)+\Oz{\frac12-\eps}.
\end{align}
Hence,
\eqref{Bz2} and \refL{LFFK} yield
\begin{align}\label{y4a}
B(z)\odot\bigsqpar{ zM_1(z)\Phi'\xpar{y(z)}}
&=B(z)\odot M_1(z) +\cc+ \Oz{\frac12-2\eps}  
.\end{align}
This implies, using  \eqref{Bz2}, \eqref{lz1a}, and \refL{LFFK} again,
followed by \eqref{xe},
and recalling 
$\Li_{0,1}\odot L(z)=\Li_{0,1}\odot\Li_{1,0}(z)=\Li_{1,1}(z)$,
\begin{align}\label{y4b}
B(z)\odot\bigsqpar{zM_1(z)\Phi'\xpar{y(z)}}&
=-\gs\qww B(z)\odot L(z)+\cc+\Oz{\gdd-2\eps}  
\notag\\&
=-\gs\qww \bigpar{\Li_{0,1}(z)\odot L(z) -\muu L(z)}+\ccx+\Oz{\gdd-2\eps}  
\notag\\&
=-\gs\qww \Li_{1,1}(z)+\gs\qww\muu L(z)+\ccx+\Oz{\gdd-2\eps}  
.\end{align}
We use the singular expansion of $\Li_{1,1}(z)$: 
\begin{align}\label{li11}
  \Li_{1,1}(z) = \tfrac12 L^2(z)-\gam L(z)+\ccname{\ccii}+\Oz{1-\eps},
\end{align}
which follows from \cite[p.~380]{Flajolet1999}
and is given in \cite[p.~96]{FillK04}, except that the error term there should be $O(|(1 - z) L(z)|)$, not $O(|1 - z|)$. 
%
Consequently, \eqref{y4b} yields
\begin{align}\label{y5}
B(z)\odot\bigsqpar{zM_1(z)\Phi'\xpar{y(z)}}  
=-\tfrac12\gs\qww L^2(z)+\gs\qww(\gam+\muu) L(z)+\cc+\Oz{\gdd-2\eps}  
.\end{align}

For the third term in \eqref{y11}, we have
by \eqref{lz0b}
and \eqref{lz1a},
again using $M_1(z)=\Oz{-\eps}$,
\begin{align}\label{y6}
zM_1(z)^2\Phi''\bigpar{y(z)} 
&= \gss M_1(z)^2 + \Oz{\gdd-2\eps}
\notag\\&
= \gs\qww L^2(z)
-2\frac{{\muu-\psi(-\frac12)}}{\gss} L(z)
+\cc+ \Oz{\gdd-2\eps}.
\end{align}
Finally, \eqref{y11} yields, by
summing \eqref{y12}, \eqref{y5} (twice) and \eqref{y6},
recalling \eqref{psi-1/2},
\begin{align}\label{y7}
  R_2(z) &= 
2\frac{{\gam+\psi(-\frac12)}}{\gss} L(z)
+\cc+ \Oz{\gdd-2\eps}
\notag\\&
=
4(1-\log 2)\gs\qww L(z)
+\ccx+ \Oz{\gdd-2\eps}
.\end{align}
The result \eqref{lz2} now follows by 
\eqref{y7}, \eqref{lh6''}, and \eqref{lgdyy},
and replacing $\eps$ by $\eps/2$ (as we may because $\eps$ is arbitrary).
\end{proof}

This gives the asymptotics for the second moment of the shape functional.
Again, the result includes 
earlier results for special cases in 
\cite{Fill96,MM98,Pittel,FillK04,Caracciolo20}. 
Recall from \eqref{xc} that $F(\ctn)=X_n'(0)-\mu'n$.
\begin{lemma}\label{LZM2}
  Assume \eqref{2+gd} with $\gd>0$.
Then
\begin{align}\label{lzm2}
\E \bigsqpar{\bigpar{X'_n(0)-\mu'n}^2 }
&= 
\E\bigsqpar{F(\ctn)^2} = 
4(1-\log 2)\gs^{-2} n\log n + \On{},
\end{align}
and thus
\begin{align}\label{lzm2v}
\Var X_n'(0)&= 
\Var {F(\ctn)} = 
4(1-\log 2)\gs^{-2} n\log n + \On{}
.\end{align}
\end{lemma}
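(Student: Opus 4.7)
The plan is to extract the coefficient $[z^n] M_2(z)$ by singularity analysis applied to the expansion of Lemma \ref{LZ2}, divide by $q_n$ using \eqref{qn}, and then pass from the second moment to the variance by subtracting the square of the mean estimated in Lemma \ref{LZ1}.

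First I would apply standard singularity analysis (\cite[Theorem VI.2]{FS}, or \cite[Figure VI.11, p.~410]{FS} for the generalized polylogarithm form) term by term to \eqref{lz2}. The leading singular term $(1-z)^{-1/2}L(z)$ contributes $\frac{\log n+O(1)}{\sqrt{\pi n}}$ to the coefficient, the constant multiple of $(1-z)^{-1/2}$ contributes $O(n^{-1/2})$, and the error term $O(|1-z|^{-\frac12+\gdd-\eps})$ translates to $O(n^{-\frac12-\gdd+\eps})$. Recalling \eqref{Mell}, this yields
\begin{align*}
q_n\E\bigsqpar{F(\ctn)^2}
= \frac{2^{3/2}(1-\log 2)}{\gs^3\sqrt\pi}\cdot\frac{\log n}{\sqrt n}+O\bigpar{n^{-1/2}}.
\end{align*}
Dividing by $q_n = \frac{1+O(n^{-\gd/2})}{\sqrt{2\pi}\gs}\,n^{-3/2}$ from \eqref{qn} and simplifying the constant via $\frac{2^{3/2}}{\sqrt\pi}\cdot\sqrt{2\pi}\gs = 4\gs$ then gives
\begin{align*}
\E\bigsqpar{F(\ctn)^2} = 4(1-\log 2)\gs^{-2}\,n\log n+O(n),
\end{align*}
which is \eqref{lzm2} once we invoke \eqref{xc} to identify $F(\ctn)$ with $X_n'(0)-\muu n$.

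For \eqref{lzm2v}, the identity $\Var X_n'(0)=\Var F(\ctn)=\E[F(\ctn)^2]-(\E F(\ctn))^2$ is immediate since $F(\ctn)$ differs from $X_n'(0)$ by the deterministic constant $\muu n$. From \eqref{lz1b} one has $\E F(\ctn)=O(n\qq)$, hence $(\E F(\ctn))^2=O(n)$, which is absorbed into the $O(n)$ error term of \eqref{lzm2}.

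I expect no genuine obstacle here: all the analytic work (cancellations of leading terms, identification of the coefficient of $L(z)$) has already been carried out in Lemma \ref{LZ2}, and Lemmas \ref{LZ1} and the $q_n$ asymptotic \eqref{qn} are in place. The only mild care needed is in the arithmetic of constants when dividing the singular expansion of $M_2(z)$ by the singular expansion of $y(z)-1$ (equivalently by $q_n$) to pass from $q_n\E[F(\ctn)^2]$ to $\E[F(\ctn)^2]$, and in checking that the $(\E F(\ctn))^2$ term really is $O(n)$ and not larger.
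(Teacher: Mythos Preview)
Your proposal is correct and follows essentially the same approach as the paper's own proof: apply singularity analysis to the expansion \eqref{lz2} to obtain $q_n\E[F(\ctn)^2]$, divide by the asymptotic \eqref{qn} for $q_n$, and then subtract $(\E F(\ctn))^2=O(n)$ (the paper cites \eqref{xp} rather than \eqref{lz1b}, but these are equivalent via \eqref{xc}). The arithmetic of constants is handled correctly.
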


\begin{proof}
We may assume $\gd\le1$.
The definition \eqref{Mell} and the
singular expansion \eqref{lz2} 
yield by standard  singularity analysis
(using \eqref{liar0}--\eqref{liar20} or \cite[Figure VI.5, p.~388]{FS})
\begin{align}\label{xn2}
q_n\E \bigsqpar{F(\ctn)^2} =
\frac{2^{3/2}(1-\log 2)}{\sqrt\pi}\gs^{-3} n\qqw\log n + \On{-\frac12}
.\end{align}
Hence, \eqref{lzm2} follows by \eqref{qn}.
Finally, \eqref{lzm2v} follows by \eqref{lzm2} and \eqref{xp}.
\end{proof}

\subsection{Higher moments}

We extend the results above to higher moments, using 
the method used earlier for special cases in 
\cite{FillK04,Caracciolo20};
see also \cite{Pittel} for a different method (in another special case).

We prove the following analogue of \cite[Lemma 12.8]{SJ359}.
Note that \eqref{lzc2} is not true for $\ell=1$, since the leading power of
$L(z)$  in that case is $L(z)^1$ by \eqref{lz1a}.
(Also \eqref{lzc2li} fails 
for $\ell=1$ in general.) 

\begin{lemma}\label{LZC2}
Assume \eqref{2+gd} with $0<\gd\le1$.
Then,   for every $\ell\ge2$, 
$M_\ell(z)$ is \gda, and,
for any 
$\eps>0$, 
  \begin{align}\label{lzc2}
 M_\ell(z)&
= 
\gs^{-\ell-1}(1-z)^{(1-\ell)/2}\sum_{j=0}^{\floor{\ell/2}}\kkk_{\ell,j}L(z)^j
+\Oz{-\frac12\ell+\frac12+\gdd-\eps}
\\\label{lzc2li}
&=\gs^{-\ell-1}\sum_{j=0}^{\floor{\ell/2}}\hkkk_{\ell,j}\Li_{(3-\ell)/2,j}(z)
+\Oz{-\frac12\ell+\frac12+\gdd-\eps}
,  \end{align}
for some coefficients $\kkk_{\ell,j}$ and $\hkkk_{\ell,j}$.
The leading coefficients $\kkkx_{2k}:=\kkk_{2k,k}$
 in the case that $\ell = 2 k$ is even
are given by the recursion
\begin{align}\label{kkkx1}
\kkkx_2&
=2^{\xfrac32}(1-\log2),
\\\label{kkkx2}
 \kkkx_{2k}&=2^{-3/2}\sum_{i=1}^{k-1}\binom{2k}{2i}\kkkx_{2i}\kkkx_{2(k-i)},
\qquad k\ge2
.\end{align}
Furthermore,
\begin{align}\label{hkkk}
\hkkk_{2k,k}=\gG\bigpar{k-\tfrac12}\qw\kkk_{2k,k}  
=\gG\bigpar{k-\tfrac12}\qw\kkkx_{2k}  
.\end{align}
\end{lemma}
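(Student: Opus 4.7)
The plan is to proceed by strong induction on $\ell$, using \refL{LH} to express $M_\ell(z)$ in terms of $M_1(z),\dots,M_{\ell-1}(z)$ together with $B(z)$, $y(z)$, $zy'(z)/y(z)$, and $\Phi^{(m)}\bigpar{y(z)}$. The base case $\ell=2$ is \refL{LZ2}, which matches \eqref{kkkx1} with $\kkkx_2=2^{3/2}(1-\log 2)$. At each induction step, $M_\ell(z)$ is \gda{} because $B$, $y$, $zy'/y$, and each $\Phi^{(m)}\bigpar{y(z)}$ are \gda{} (see \refSS{SSGW}), and \refL{LFFK} preserves $\Delta$-analyticity under Hadamard products.

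For the inductive step, fix $\ell\ge 3$ and assume \eqref{lzc2} for every $k$ with $2\le k<\ell$; use also \eqref{lz1a} for $k=1$. Substitute these expansions, along with \refL{LZ0} for $\Phi^{(m)}\bigpar{y(z)}$ and \eqref{Bz1}--\eqref{Bz2} for $B(z)$, into \eqref{lh}. Using $M_k\sim(1-z)^{(1-k)/2}L(z)^{\lfloor k/2\rfloor}$ for $k\ge 2$, $M_1\sim-\gs^{-2}L(z)$, $B(z)\sim(1-z)^{-1}L(z)$, and $zy'/y\sim 2^{-1/2}\gs^{-1}(1-z)^{-1/2}$ (from \refL{Lgdy}), a direct singular order count via \refL{LFFK} shows that every term with $m\ge 3$ contributes only at strictly weaker order (owing to the bound \eqref{lz0c} for $\Phi^{(m)}$), and likewise every term with $m+\ell_0\ge 3$. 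Thus only configurations with $m+\ell_0\le 2$ need to be tracked for leading behavior, and \refL{LFFK} yields the error bound $O\bigpar{|1-z|^{-\ell/2+1/2+\gd/2-\eps}}$ claimed in \eqref{lzc2}.

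For even $\ell=2k$, the candidate leading contributions to $R_\ell(z)$ at order $(1-z)^{1-k}L(z)^k$ fall into three families: \textbf{(a)} $m=2$, $\ell_0=0$, with $\ell_1=2i$ and $\ell_2=2(k-i)$ both even and in $[2,2k-2]$; \textbf{(b)} $m=2$, $\ell_0=0$, $(\ell_1,\ell_2)\in\{(1,2k-1),(2k-1,1)\}$; and \textbf{(c)} $m=1$, $\ell_0=1$, $\ell_1=2k-1$. A computation shows that (b) and (c) cancel exactly at the level of $(1-z)^{1-k}L(z)^k$. For (c), $B\odot M_{2k-1}$ is evaluated using $\Li_{0,1}\odot\Li_{2-k,k-1}=\Li_{2-k,k}$ (from \eqref{lilix}); extracting the leading coefficient via \refL{Liar} and the coefficient identity $\hkkk_{2k-1,k-1}\gG(k-1)=\kkk_{2k-1,k-1}$ (the analog of \eqref{hkkk} for odd $\ell$, derived from \eqref{liar2}) produces $+\gs^{-2k}\kkk_{2k-1,k-1}(1-z)^{1-k}L(z)^k$. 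For (b), the leading coefficient of $\gss M_1M_{2k-1}$ is $-\gs^{-2k}\kkk_{2k-1,k-1}(1-z)^{1-k}L(z)^k$, the sign supplied by the leading $-\gs^{-2}L(z)$ of $M_1$ in \eqref{lz1a}. Both (b) (summed over the two orderings) and (c) come with a net multinomial factor of $2k$, so they cancel. This is directly analogous to the cancellation of $L(z)^2$ terms between \eqref{y5} and \eqref{y6} in the $\ell=2$ computation. What survives is family (a), giving
\begin{align*}
R_{2k}(z)=\tfrac12\gss\sum_{i=1}^{k-1}\binom{2k}{2i}\gs^{-2i-1}\gs^{-2(k-i)-1}\kkkx_{2i}\kkkx_{2(k-i)}(1-z)^{1-k}L(z)^k+(\text{lower order}).
\end{align*}
Multiplying by $zy'/y\sim 2^{-1/2}\gs^{-1}(1-z)^{-1/2}$ and equating with the desired leading $\gs^{-2k-1}\kkkx_{2k}(1-z)^{(1-2k)/2}L(z)^k$ of $M_{2k}(z)$ yields the recursion \eqref{kkkx2}. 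For odd $\ell$, a parallel analysis establishes \eqref{lzc2} (no explicit leading coefficient is asserted). Finally, \eqref{hkkk} follows from converting between \eqref{lzc2} and \eqref{lzc2li} via the leading coefficient $\gG((\ell-1)/2)=\gG(k-\tfrac12)$ of $\Li_{(3-\ell)/2,j}(z)$ supplied by \eqref{liar2}.

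The main obstacle is the bookkeeping of the cancellation between families (b) and (c) at order $(1-z)^{1-k}L(z)^k$, combined with the error-term tracking across the numerous configurations in \eqref{lh}. Once the cancellation is verified and the multinomial factors are assembled correctly, the recursion \eqref{kkkx2} and the asserted expansion \eqref{lzc2} emerge.
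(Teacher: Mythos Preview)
Your proposal is correct and follows essentially the same route as the paper: induction on $\ell$ with base case \refL{LZ2}, reduction via \refL{LH} and \refL{LZ0}/\refL{LFFK} to the terms with $m+\ell_0\le 2$, the key cancellation between $\ell\,B\odot M_{\ell-1}$ and $\gss\ell\,M_1 M_{\ell-1}$ at the top $L$-power (your families (c) and (b)), and extraction of \eqref{kkkx2} from the surviving $m=2$, even-index products. The only point to make explicit in a full write-up is that the same cancellation is needed for odd $\ell$ as well (to kill the would-be $L(z)^{\lfloor(\ell+1)/2\rfloor}$ term and keep the top power at $\lfloor\ell/2\rfloor$), which your ``parallel analysis'' indeed covers by the identical mechanism.
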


\begin{proof}
Note first that \eqref{lzc2} and \eqref{lzc2li} are equivalent by
\refL{Liar},
and that \eqref{hkkk} follows using \eqref{liar20}.

We use induction on $\ell$. The base case $\ell=2$ 
(including \eqref{kkkx1})
is \refL{LZ2}, so we
  assume $\ell\ge3$.
We follow the proof of \cite[Lemma 12.8]{SJ359}, 
\emph{mutatis mutandis}.

We first note that $L(z)=\Oz{-\eps}$. Hence, 
for every $\ell'<\ell$,
the induction hypothesis
and (for the case $\ell'=1$) \refL{LZ1} 
show that 
\begin{align}\label{ta0}
  M_{\ell'}(z) = \Oz{-\frac12\ell'+\frac12-\eps}.
\end{align}
(Here and in the sequel we replace 
without further comment,
as we may,
$c\eps$  by $\eps$, for any constant $c$.)
Hence, using \refL{LZ0}, 
for a typical term in \eqref{lh}  (with  $m\ge0$),
\begin{align}\label{lc1qq}
 zM_{\ell_1}(z)\dotsm M_{\ell_m}(z)\Phi\xxm\yz
&= O\bigpar{|1-z|^{-\frac12\sumim \ell_i+\frac{1}{2}m-\eps}\Phi\xxm\yz}
\notag\\
&
=
  \begin{cases}
    O\bigpar{|1-z|^{-\frac12(\ell-\ell_0)+\frac12m-\eps}}, & m\le 2,
\\
  O\bigpar{|1-z|^{-\frac12(\ell-\ell_0)+1+\gdd-\eps}}, & m\ge3.
  \end{cases}
\end{align}
Since $\ell - \ell_0 \ge m$, the exponent here is $<0$.
Hence, \eqref{Bz2} and \refL{LFFK} 
applied $\ell_0$ times yield
\begin{align}\label{lc3}
&B(z)^{\odot\ell_0} \odot 
\bigsqpar{zM_{\ell_1}(z)\dotsm M_{\ell_m}(z)\Phi\xxm\yz}
\notag\\
&\hskip6em
=
  \begin{cases}
    O\bigpar{|1-z|^{-\frac12\ell+\frac12\ell_0+\frac12m-\eps}}, & m\le 2,
\\
  O\bigpar{|1-z|^{-\frac12\ell+\frac12\ell_0+1+\gdd-\eps}}, & m\ge3.
  \end{cases}
\end{align}
If $m=0$, then $\ell_0=\ell\ge3$, and if $m=1$, then $\ell_1<\ell$ and thus
$\ell_0=\ell-\ell_1\ge1$. 
Hence, 
except in the two cases (1) $m=1$ and $\ell_0=1$ and 
(2) $m=2$ and $\ell_0=0$,
we have $m+\ell_0\ge3$, and then
the exponent in \eqref{lc3} is $\ge -\frac12\ell+1+\gdd-\eps$.
Consequently, by \eqref{lh}--\eqref{lh6''},
\begin{align}\label{lc8}
  R_l(z) = \ell B(z) \odot \bigsqpar{z M_{\ell-1}(z) \Phi'\yz}
&+ \frac12\sum_{j=1}^{\ell-1}\binom{\ell}{j} z M_j(z)M_{\ell-j}(z) \Phi''\yz
\notag\\
&+\Oz{-\frac12\ell+1+\gdd-\eps}
.\end{align}
By \eqref{lz0a}, \eqref{ta0}, \eqref{Bz2} and \refL{LFFK}, we have,
similarly to \eqref{y4a},
\begin{align}\label{y4am}
B(z)\odot\bigsqpar{ zM_{\ell-1}(z)\Phi'\xpar{y(z)}}
&=B(z)\odot M_{\ell-1}(z) +\Oz{-\frac12\ell+\frac32-\eps}  
.\end{align}
Hence, using also \eqref{lz0b} and (again) \eqref{ta0}, 
we can simplify \eqref{lc8}  to
\begin{align}\label{lzc8}
  R_l(z) = \ell B(z) \odot M_{\ell-1}(z)
&+ \frac{\gss}2\sum_{j=1}^{\ell-1}\binom{\ell}{j} M_j(z)M_{\ell-j}(z)
+\Oz{-\frac12\ell+1+\gdd-\eps}
.\end{align}

In the remaining estimates we have to be more careful, in particular since
there will be important cancellations.
(This  is as in the case $\ell=2$ treated earlier, but somewhat different.) 

Consider first the Hadamard product in \eqref{lzc8} 
(the case $m=1$ and $\ell_0=1$ above).
We now use the induction hypothesis in the form \eqref{lzc2li} and obtain 
by \eqref{lilix} and \eqref{xe},
using again \eqref{Bz2} and \refL{LFFK} for the error term,
and finally rewriting by \eqref{liar1},
\begin{align}\label{ew2}
&B(z)\odot M_{\ell-1}(z)
\notag\\&\quad
=\gs^{-\ell}
\sum_{j=0}^{\floor{(\ell-1)/2}}\hkkk_{\ell-1,j}
\bigpar{\Li_{(4-\ell)/2,j+1}(z)-\mu'\Li_{(4-\ell)/2,j}(z)}
+\Oz{-\frac12\ell+1+\gdd-\eps}
\notag\\&\quad
=\gs^{-\ell}
\sum_{k=0}^{\floor{(\ell+1)/2}}\ci_{\ell,k}\Li_{(4-\ell)/2,k}(z)
+\Oz{-\frac12\ell+1+\gdd-\eps}
\notag\\&\quad
= \gs^{-\ell}  (1-z)^{-\frac12\ell+1} \sum_{k=0}^{\floor{(\ell+1)/2}} \cii_{\ell,k} L(z)^k
+\Oz{-\frac12\ell+1+\gdd-\eps}
,\end{align}
where the leading coefficient in the sum
is, using \eqref{liar2} and \eqref{liar20},
\begin{align}\label{ew3}
\cii_{\ell,\floor{(\ell+1)/2}}
=
\gG\xpar{\ell/2-1}
\ci_{\ell,\floor{(\ell+1)/2}}
=
\gG\xpar{\ell/2-1}
\hkkk_{\ell-1,\floor{(\ell-1)/2}}
=\kkk_{\ell-1,\floor{(\ell-1)/2}}
.\end{align}
The leading term in \eqref{ew2} is  thus
\begin{align}\label{ew4}
\gs^{-\ell}\kkk_{\ell-1,\floor{(\ell-1)/2}}
(1-z)^{-\frac12\ell+1} L(z)^{\floor{(\ell+1)/2}}
.\end{align}

Consider now the  terms with $j=1$ and $j=\ell-1$ in the sum in
\eqref{lzc8}.
By \refL{LZ1} and the induction hypothesis, we  have
\begin{align}\label{ew5}
\gss M_1(z)M_{\ell-1}(z)&
=\gs^{-\ell} (1-z)^{-\frac12\ell+1} \sum_{j=0}^{\floor{(\ell-1)/2}}\kkk_{\ell-1,j}
\bigsqpar{-L(z)^{j+1}+cL(z)^j}
\notag\\&\hskip2em
+\Oz{-\frac12\ell+1+\gdd-\eps}
.\end{align}
Note that the leading term in \eqref{ew5} cancels \eqref{ew4}.
Consequently, \eqref{ew2}--\eqref{ew5} yield 
\begin{align}\label{ew6}
&\ell B(z) \odot  M_{\ell-1}(z)
+ \frac{\gss}2\cdot2\cdot\binom{\ell}{1} M_1(z)M_{\ell-1}(z)
\notag\\&\hskip2em
=  (1-z)^{-\frac12\ell+1} \sum_{k=0}^{\floor{(\ell-1)/2}} \ciii_{\ell,k}L(z)^k
+\Oz{-\frac12\ell+1+\gdd-\eps}
.\end{align}

The remaining terms in \eqref{lzc8} yield immediately, by the induction
hypothesis,
\begin{align}\label{ew7}
&\frac{\gss}2\sum_{j=2}^{\ell-2}\binom{\ell}{j} M_j(z)M_{\ell-j}(z)
=  (1-z)^{-\frac12\ell+1} \sum_{k=0}^{\floor{\ell/2}} \civ_{\ell,k}L(z)^k
+\Oz{-\frac12\ell+1+\gdd-\eps}
.\end{align}
Finally, \eqref{lzc8} and \eqref{ew6}--\eqref{ew7} yield
  \begin{align}\label{ewr}
R_\ell(z) = 
(1-z)^{-\frac12\ell+1} \sum_{j=0}^{\floor{\ell/2}}\cv_{\ell,j}L(z)^j
+\Oz{-\frac12\ell+1+\gdd-\eps}
,  \end{align}
and \eqref{lzc2} follows by \eqref{lh6''} and \eqref{lgdyy}, 
which completes the induction step. 

It remains only to show the recursion \eqref{kkkx2} for the leading
coefficients.
If $\ell=2k$ is even, with $\ell\ge4$, then \eqref{ew6} does not contribute
to $\cv_{2k,k}$ 
 nor 
thus to $\kkk_{2k,k}$, 
and neither do the terms in \eqref{ew7} with $j$ odd.
Hence, the argument above  yields
\begin{align}\label{eiv}
 \cv_{2k,k}=\frac12\sum_{i=1}^{k-1}\binom{2k}{2i}\gs^{-2k}\kkk_{2i,i}\kkk_{2k-2i,k-i}
\end{align}
and thus, recalling again \eqref{lgdyy},
\begin{align}\label{ewk}
 \kkk_{2k,k}=2^{-3/2}\sum_{i=1}^{k-1}\binom{2k}{2i}\kkk_{2i,i}\kkk_{2k-2i,k-i},
\end{align}
which is \eqref{kkkx2}.
\end{proof}

The recursion \eqref{kkkx2} is the same as \cite[(C.35)]{SJ359},
and thus has the same solution \cite[(C.40)]{SJ359}, \ie,
\begin{align}\label{ick8z}
  \kkkx_{2k}=2^{3/2}\frac{(2k)!\,(2k-2)!}{(k-1)!\,k!}
  d_1^k,
\qquad k\ge1,
\end{align}
with, see \cite[(C.36)]{SJ359} and \eqref{kkkx1},
\begin{align}\label{zd1}
  d_1:=2^{-3/2}\kkkx_2/2  =\tfrac12(1-\log 2)
.\end{align}

This is what we need to complete the proof of the asymptotic normality
 of $F(\ctn)$.

\begin{proof}[Proof of \refT{TZ}]
If $\ell\ge2$, then
\eqref{Mell},
the  expansion \eqref{lzc2li}, \eqref{lixr}, and standard  singularity analysis
yield
\begin{align}\label{xnl}
q_n\E \bigsqpar{F(\ctn)^\ell} =\gs^{-\ell-1}
\hkkk_{\ell,\floor{\ell/2}}
n^{(\ell-3)/2}(\log n)^{\floor{\ell/2}}
+O\bigpar{n^{(\ell-3)/2}(\log n)^{\floor{\ell/2}-1}}
.\end{align}
Hence, using \eqref{qn},
\begin{align}\label{gnu}
\E \bigsqpar{F(\ctn)^\ell} 
=\gs^{-\ell}
\sqrt{2\pi}\hkkk_{\ell,\floor{\ell/2}}
n^{\ell/2}(\log n)^{\floor{\ell/2}}
+O\bigpar{n^{\ell/2}(\log n)^{\floor{\ell/2}-1}}
.\end{align}
Consequently,
\begin{align}\label{tussi}
\frac{\E \bigsqpar{F(\ctn)^\ell} }{(n\log n)^{\ell/2}}
\to
  \begin{cases}
    0,& \ell=2k+1\ge3,
\\
\gs^{-2k}
\sqrt{2\pi}\hkkk_{2k,k},
&\ell=2k\ge2.
  \end{cases}
\end{align}
Furthermore, \eqref{tussi} holds also for $\ell=1$ (with limit 0) by
\eqref{xp}.

For even $\ell=2k$, the limit in \eqref{tussi} is  by 
\eqref{hkkk}, \eqref{ick8z}, and \eqref{zd1}, 
\cf{} \cite[(C.41)]{SJ359},
\begin{align}\label{april}
\gs^{-2k}\frac{\sqrt{2\pi}}{\gG(k-\frac12)}\kkkx_{2k}
&=
\gs^{-2k}\frac{4\sqrt{\pi}}{\gG(k-\frac12)}
\frac{(2k)!\,(2k-2)!}{(k-1)!\,k!} d_1^k
=\gs^{-2k}2^{2k}\frac{(2k)!}{k!}d_1^k
\notag\\&
=\bigpar{8d_1\gs\qww}^k\cdot(2k-1)!! 
=
\bigpar{4(1-\log2)\gs\qww}^k
\cdot
(2k-1)!!
.\end{align}
Consequently, the limits appearing
in \eqref{tussi} are the moments of a normal
distribution $N\bigpar{0,4(1-\log2)\gs\qww}$, and thus \eqref{tz} follows by
the method of moments. (Recall that $F(\ctn)=X_n'(0)-\mu'n$ by \eqref{xc}.)
\end{proof}

\section{Imaginary powers}\label{S:I}
\ccreset

In this section, 
we consider $X_n(\ga)$ in \eqref{Xn} when
the exponent $\ga$ is purely imaginary, \ie, $\Re\ga=0$.
We exclude the trivial case $\ga=0$, when $X_n(\ga)=n$ is non-random.
We assume throughout the section 
that $0<\gd<1$ and that \eqref{2+gd} holds.
As above, $\eps$ is an arbitrarily small positive number, and we replace
$c\eps$ by $\eps$ without  comment.

We follow rather closely the argument for the case $0<\rea<1/2$
in \cite[\S12.4--6]{SJ359}, but we will see new terms appearing that will
lead to the dominating terms with logarithmic factors for the moments; 
this is very similar to the argument in
\refS{S:shape}, but we will see some differences.
(Notably, there are no  cancellations of leading terms  like those in \refS{S:shape}.)

As in \cite[\S12.4]{SJ359}, we define 
\begin{align}
  b_n:=n^\ga-\mu(\ga),
\end{align}
with the  following generating function
(\cf{} \cite[(12.44)]{SJ359}
and \eqref{lia},
and note $\Li_0(z)=z(1-z)\qw$):
\begin{align}\label{Boxli}
B(z)&= B_\ga(z):=\sumn b_nz^n
=\Li_{-\ga}(z)-\mu(\ga)\Li_0(z)
\\&\label{Box2}
= \gG(1+\ga)(1-z)^{-\ga-1}-\mu(\ga)(1-z)\qw+O(1)
\\&\label{Box3}
= \Oz{-1}
.\end{align}

Let now 
$\dF(T)=\dF_\ga(T)$ denote the additive functional defined by the toll function
$\df_\ga(T):=b_{|T|}$.
Thus, 
\begin{align}\label{FX}
 \dF_\ga(\tn)=X_n(\ga)-n\mu(\ga). 
\end{align}

\subsection{The mean}\label{S:Imean}

For the mean, we define the generating function
\begin{align}\label{M1}
M_\ga(z):=\E \bigsqpar{\dF_\ga(\cT) z^{|\cT|}}
=\sumn q_n \E[\dF_\ga(\tn)] z^n
.\end{align}
We then have,
as in \eqref{xsx1} and \cite[(12.29)]{SJ359},
\begin{align}\label{sx1}
  M_\ga(z)
=\frac{zy'(z)}{y(z)}\cdot\bigpar{ B_\ga(z)\odot y(z)}
.\end{align}
Thus $M_\ga(z)$ is \gda. Further,
we have by \eqref{lili}, \eqref{Boxli}, and \eqref{yli}, 
using \eqref{Box3} and \refL{LFFK} for the error term in \eqref{yli},
and then using for the second line \eqref{lia} and $\gG(-\frac12)=-2\sqrt\pi$,
\begin{align}\label{sixten2}
  B_\ga(z)&\odot y(z )
=  
\frac{1}{\sqrt{2\pi}\gs}\Li_{3/2-\ga}(z)
-\frac{\mu(\ga)}{\sqrt{2\pi}\gs}\Li_{3/2}(z)
+\cc+\Oz{\frac12+\frac\gd2}
\notag\\&
= 
\frac{\gG(\ga-\half)}{\sqrt{2\pi}\gs}
(1-z)^{\half-\ga}
+2\qq\gs\qw\mu(\ga)(1-z)\qq
+\cc+\Oz{\frac12+\frac\gd2}.
\end{align}
 Further,
similarly to \eqref{xh},
\begin{align}\label{xhga}
  (B_\ga \odot y)(1) = \sumn b_n q_n 
=\sumn q_n [n^\ga-\mu(\ga)]
=\E|\cT|^\ga- \mu(\ga)=0.
\end{align}
Thus, 
letting $z\to1$ in \eqref{sixten2}
shows that $\ccx=(B_\ga\odot y)(1)=0$.

Finally,  \eqref{sx1}, \eqref{lgdyy}, and \eqref{sixten2} yield,
using \eqref{lia} again,
\begin{align}\label{theo}
M_\ga(z)&=
\frac{\gG(\ga-\half)}{2\sqrt\pi\gss}(1-z)^{-\ga}
+\gs\qww\mu(\ga)
+\Oz{\frac\gd2}
\\&\label{theoli}
=\frac{\gG(\ga-\half)}{2\sqrt\pi\gss\gG(\ga)}\Li_{1-\ga}(z)
+c
+\Oz{\frac\gd2}
.\end{align}
 
Singularity analysis now yields, 
from \eqref{M1}  and 
\eqref{theoli},
\begin{align}\label{ixo}
  q_n \E[\dF_\ga(\tn)] = 
\frac{\gG(\ga-\half)}{2\sqrt\pi\gss\gG(\ga)}n^{\ga-1}
+O\bigpar{n^{-1-\gdii}}
\end{align}
and thus, by \eqref{qn},
\begin{align}\label{ixp}
 \E[\dF_\ga(\tn)] = 
\frac{\gG(\ga-\half)}{\sqrt2\gs\gG(\ga)}n^{\frac12+\ga}
+\On{\frac12-\gdii}
\end{align}
Hence, recalling \eqref{FX},
\begin{align}\label{ixq}
  \E X_n(\ga)=\mu(\ga)n 
+\frac{\gG(\ga-\half)}{\sqrt2\gs\gG(\ga)}n^{\frac12+\ga}
+\On{\frac12-\gdii}
.\end{align}
This agrees with \cite[Theorem 1.7(ii)]{SJ359}
(proved without \eqref{2+gd}, and by different methods), except that the
error  estimate here is smaller.

\subsection{Higher moments}\label{S:Imom}

For higher moments, we need mixed moments for 
$\ga$ and $\bga=-\ga$. 
Thus, somewhat more generally, fix $\ga_1$ and $\ga_2$ with
$\rea_1=\rea_2=0$
but $\ga_1\neq0\neq\ga_2$.
We define, for integers $\ell_1,\ell_2\ge0$,
the generating function
  \begin{align}
\label{Mell2}
  M_{\ell_1,\ell_2}(z)
&:=\E \bigsqpar{\dF_{\ga_1}(\cT)^{\ell_1}\dF_{\ga_2}(\cT)^{\ell_2} z^{|\cT|}}
=\sumn q_n 
\E\bigsqpar{\dF_{\ga_1}(\ctn)^{\ell_1}\dF_{\ga_2}(\ctn)^{\ell_2}} z^n
.\end{align}
Thus $M_{1,0}=M_{\ga_1}$ and $M_{0,1}=M_{\ga_2}$ are given by \eqref{sx1}.
The  functions $M_{\ell,r}$
can then be found by the following recursion, given in \cite[(12.75)]{SJ359},
for every $\ell,r\ge0$
with 
{$\ell+r\ge1$}: 
\begin{multline}\label{lhz}
  M_{\ell,r}(z)
=
\frac{z y'(z)}{y(z)}
\sum_{m=0}^{\ell+r} \frac{1}{m!}\sumxx \binom{\ell}{\ell_0,\dots,\ell_m}
\binom{r}{r_0,\dots,r_m}
B_{\ga_1}(z)^{\odot\ell_0} 
\\
\odot 
B_{\ga_2}(z)^{\odot r_0} \odot 
\bigsqpar{zM_{\ell_1,r_1}(z)\dotsm M_{\ell_m,r_m}(z)\Phi\xxm\yz},
\end{multline}
where $\sumxx$ is the sum over all pairs of
$(m+1)$-tuples $(\ell_0,\dots,\ell_m)$  and $(r_0,\dots,r_m)$ 
of non-negative integers that sum to $\ell$ and $r$, respectively,
such that $1\le\ell_i+r_i<\ell + r$ for every $i\ge1$.
(Note that there are two typos in \cite{SJ359}:
the lower summation limit should be $m=0$, and  
the final qualification ``$i\ge1$'' is missing there.)
It follows by induction that every $M_{\ell,r}$ is \gda.

We define for convenience $R_{\ell,r}(z)$ as the sum in \eqref{lhz}; thus 
\begin{align}\label{mr}
    M_{\ell,r}(z)
=
\frac{z y'(z)}{y(z)}R_{\ell,r}(z).
\end{align}

Let us first consider second moments. Taking $\ell=r=1$ in \eqref{lhz}
yields, recalling \eqref{Phiy},
\begin{align}\label{the1}
    R_{1,1}(z)&
=
B_{\ga_1}(z)\odot B_{\ga_2}(z)\odot y(z)
+ B_{\ga_1}(z)\odot [zM_{0,1}(z)\Phi'(y(z))]
\notag\\&\qquad
+ B_{\ga_2}(z)\odot [zM_{1,0}(z)\Phi'(y(z))]
+ zM_{1,0}(z)M_{0,1}(z)\Phi''(y(z)).
\end{align}
The first term  is, 
by \eqref{Box3} and \eqref{sixten2} together with \refL{LFFK},
\begin{align}
 \Oz{-1}\odot\Oz{1/2}=\cc+\Oz{1/2}. 
\end{align}
For the other terms in \eqref{the1}, we first 
note from \eqref{theo} that  $M_{1,0}(z)=M_{\ga_1}(z)=O(1)$ and
 $M_{0,1}(z)=M_{\ga_2}(z)=O(1)$. Thus, using also
\eqref{lz0a}--\eqref{lz0b}, \eqref{Box3} and \refL{LFFK}, we may simplify 
to
\begin{align}\label{the2}
    R_{1,1}(z)&
=\cc
+ B_{\ga_1}(z)\odot M_{0,1}(z)
+ B_{\ga_2}(z)\odot M_{1,0}(z)
+ M_{1,0}(z)M_{0,1}(z)\gss
\notag\\&\qquad
+\Oz{\gd/2}.
\end{align}
Furthermore, \eqref{theo}  yields
\begin{align}\label{MM}
  M_{1,0}(z)M_{0,1}(z)&
=\cc(1-z)^{-\ga_1}+\cc(1-z)^{-\ga_2}+\ccname{\ccab}(1-z)^{-\ga_1-\ga_2}+\cc
+\Oz{\frac{\gd}2}
.\end{align}
We compute the Hadamard products in \eqref{the2} by
\eqref{lili}, \eqref{Boxli} and \eqref{theoli}, 
using again \eqref{Box3} and \refL{LFFK} for the error term.
Together with \eqref{MM}, this yields from \eqref{the2}
a result that we write, using \eqref{lia}, as
\begin{align}\label{the3}
    R_{1,1}(z)&
= 
\Bigpar{\frac{\gG(\ga_2-\half)}{2\sqrt\pi\gss\gG(\ga_2)}
+\frac{\gG(\ga_1-\half)}{2\sqrt\pi\gss\gG(\ga_1)}}
\Li_{1-\ga_1-\ga_2}(z)
\notag\\&\quad
+\ccname\cceri(1-z)^{-\ga_1}
+\ccname\ccmag(1-z)^{-\ga_2}
+\ccab(1-z)^{-\ga_1-\ga_2}
+\cc
\notag\\&\quad
+\Oz{\gd/2}.
\end{align}
If $\ga_1+\ga_2\neq0$, we use \eqref{lia} also on the first term and obtain
\begin{align}\label{the4}
    R_{1,1}(z)&
= 
\cc(1-z)^{-\ga_1-\ga_2}
+\cceri(1-z)^{-\ga_1}
+\ccmag(1-z)^{-\ga_2}
+\cc
\notag\\&\quad
+\Oz{\gd/2}.
\end{align}
On the other hand, if $\ga_1+\ga_2=0$, we recall that $\Li_1(z)=L(z)$,
and thus \eqref{the3} yields
\begin{align}\label{the0}
    R_{1,1}(z)&
= 
\frac{1}{\sqrt\pi\gss}\Re\frac{\gG(\ga_1-\half)}{\gG(\ga_1)}\cdot
L(z)
+\cceri(1-z)^{-\ga_1}
+\ccmag(1-z)^{-\ga_2}
+\cc
\notag\\&\quad
+\Oz{\gd/2}.
\end{align}

We can now obtain $M_{1,1}(z)$ from \eqref{the4}--\eqref{the0} by \eqref{mr}
and \eqref{lgdyy}.
We do not state the result separately, but proceed immediately to a general
formula.

\begin{lemma}\label{Lth}
  Let $\ga\neq0$ with $\rea=0$, and take 
$\ga_1=\ga$ and $\ga_2=\bga=-\ga$.
Then, for each pair of integers $\ell,r\ge0$ with $\ell+r\ge2$,
$M_{\ell,r}(z)$ is \gda{} and
we have, 
for some coefficients $\kk_{\ell,r;j,k}$ and $\hkk_{\ell,r;j,k}$, and 
every  $\eps>0$,
\begin{align}
\label{lth}
  M_{\ell,r}(z)
&=
\sum_{j,k}\kk_{\ell,r;j,k} (1-z)^{(1-\ell-r)/2+j\ga} L(z)^k
+\Oz{\frac12(1-\ell-r)+\gdde}
\\\label{lthli}
&=
\sum_{j,k}\hkk_{\ell,r;j,k}\Li_{(3-\ell-r)/2+j\ga,k}(z)
+\Oz{\frac12(1-\ell-r)+\gdde}
,\end{align}
where the sums are over integers $j$ and $k$ with $-\ell\le j\le r$ and
$0\le k\le \ell\land r$.

Furthermore, if $\ell+r=1$, then \eqref{lth} holds (but not \eqref{lthli}).

If\/ $\ell=r$, then the only non-zero coefficients with $k=\ell=r$ are
\begin{align}\label{ltha}
  \kk_{\ell,\ell;0,\ell}&=\gs^{-2\ell-1}\kkx_\ell,
\\\label{lthb}
  \hkk_{\ell,\ell;0,\ell}&= \gG\bigpar{\ell-\tfrac12}\qw \kk_{\ell,\ell;0,\ell}
=\frac{\gs^{-2\ell-1}}{\gG\bigpar{\ell-\frac12}}\kkx_\ell,
\end{align}
where $\kkx_\ell$ is given by the recursion
\begin{align}\label{lthkk1}
  \kkx_1&=
\frac{1}{\sqrt{2\pi}}\Re\frac{\gG(\ga-\half)}{\gG(\ga)},
\\\label{lthkk}
  \kkx_\ell&
=2^{-3/2}\sum_{i=1}^{\ell-1}\binom{\ell}{i}^2\kkx_{i}\kkx_{\ell-i},
\qquad \ell\ge2.
\end{align}
\end{lemma}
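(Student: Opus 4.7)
My plan is to prove the lemma by strong induction on $s := \ell + r$. The base case $s = 1$ follows by reading \eqref{theo} for $\ga_1 = \ga$ and for $\ga_2 = -\ga$; the polylogarithm form \eqref{lthli} is not asserted here because the constant part in \eqref{theo} is not absorbed by any $\Li_{1,k}$ expansion at the required error order. The case $s = 2$ (so $\ell = r = 1$) is \eqref{the0} combined with \eqref{mr} and \eqref{lgdyy}; matching the $(1-z)^{-1/2}L(z)$ coefficient against \eqref{ltha} gives the initial value $\kkx_1 = \frac{1}{\sqrt{2\pi}}\Re\frac{\gG(\ga-\frac12)}{\gG(\ga)}$ stated in \eqref{lthkk1}. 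The equivalence of \eqref{lth} and \eqref{lthli} for $s \ge 2$ and the identity \eqref{lthb} both follow from \refL{Liar} together with \eqref{liar20}.

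For the induction step $s \ge 3$, I apply the recursion \eqref{lhz}, dissecting each summand into three stages: the product $z M_{\ell_1, r_1}(z) \cdots M_{\ell_m, r_m}(z) \Phi^{(m)}(y(z))$, the iterated Hadamard with $B_{\ga_1}^{\odot \ell_0} \odot B_{\ga_2}^{\odot r_0}$, and the final multiplication by $zy'(z)/y(z)$. For the first stage, I use the induction hypothesis on each $M_{\ell_i, r_i}(z)$ together with the estimates of $\Phi^{(m)}(y(z))$ in \refL{LZ0}; terms with $m \ge 3$ pick up the factor $\Oz{\gdd + 1 - m/2}$, which places them inside the final error $\Oz{(1-\ell-r)/2 + \gdd - \eps}$. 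For the second stage, I use \eqref{Boxli} to write $B_{\ga_i}(z) = \Li_{-\ga_i}(z) - \mu(\ga_i)\Li_0(z) + O(1)$ and apply \refL{LFFK} repeatedly; using \eqref{lilix} each Hadamard pass with $B_{\ga_i}$ shifts the polylogarithm subscript of a typical term by $-\ga_i$ without changing its order of magnitude, which corresponds in the $(1-z)$ form to a shift of the $\ga$-exponent by $-\ga_i$. The third stage adds a universal $-1/2$ to the real exponent via \eqref{lgdyy}. Summing exponents, a typical surviving term has the form $(1-z)^{(1-\ell-r)/2 + j\ga}L(z)^k$ with $j = r_0 - \ell_0 + \sum_i j_i$ and $k = \sum_i k_i$, which lies in the asserted ranges $-\ell \le j \le r$ (because $j_i \in \{-\ell_i, \ldots, r_i\}$ and $\ell_0 + \sum \ell_i = \ell$, $r_0 + \sum r_i = r$) and $0 \le k \le \ell \land r$ (by the saturation inequality $\sum_i(\ell_i \land r_i) \le (\sum_i \ell_i) \land (\sum_i r_i)$).

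For the leading-coefficient recursion at $\ell = r$ and $k = \ell$, I will argue that the only contributing summands in \eqref{lhz} are those with $m = 2$, $\ell_0 = r_0 = 0$, and balanced partition $(\ell_1, r_1) = (i, i)$, $(\ell_2, r_2) = (\ell-i, \ell-i)$ for $i = 1, \ldots, \ell-1$. Equality in the saturation bound forces $\ell_j = r_j$ for every $j$; Hadamard passes with $B_{\ga_i}$ (so $\ell_0 + r_0 > 0$) forfeit at least one power of $L(z)$ because each $B_{\ga_i}$ contributes its leading $\Li_{-\ga_i}$ with secondary index zero; and the terms with $m = 0$, $m = 1$, or $m \ge 3$ each incur either a missing $\gss$-factor or an extra $(1-z)^{1/2}$ or $\gdd$ that lowers the $L$-power achievable at the prescribed real exponent $(1-\ell-r)/2$. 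For the surviving summands, inserting \eqref{ltha} for $M_{i,i}(z)$ and $M_{\ell-i,\ell-i}(z)$, using $\Phi''(y(z)) \to \gss$ from \eqref{lz0b}, and multiplying by $zy'(z)/y(z) \sim 2^{-1/2}\gs^{-1}(1-z)^{-1/2}$ yields
\begin{align*}
\kk_{\ell,\ell;0,\ell}
= \frac{1}{2^{1/2}\gs} \cdot \frac{\gss}{2} \sum_{i=1}^{\ell-1}\binom{\ell}{i}^2 \kk_{i,i;0,i}\,\kk_{\ell-i,\ell-i;0,\ell-i}
= 2^{-3/2}\gs^{-2\ell-1} \sum_{i=1}^{\ell-1}\binom{\ell}{i}^2 \kkx_i \kkx_{\ell-i},
\end{align*}
which is \eqref{lthkk} after dividing by $\gs^{-2\ell-1}$ via \eqref{ltha}.

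The main obstacle is the $L$-power bookkeeping in the previous paragraph: cleanly demonstrating that no non-diagonal partition, no Hadamard pass with $B$, and no summand with $m \neq 2$ can reach the saturating power $L(z)^\ell$ at the prescribed real exponent $(1-z)^{(1-2\ell)/2}$. This requires tracking $(1-z)$-exponents and resonance-driven $L$-factors through several layers of iterated Hadamard products, for which the systematic use of \refL{Liar} to move between the $\Li_{\cdot, k}$ and $(1-z)^{\cdot}L(z)^k$ forms, combined with \refL{LFFK} for the error-term control, is essential.
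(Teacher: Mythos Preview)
Your approach is essentially the paper's: induction on $s=\ell+r$, base cases from the explicit second-moment computations preceding the lemma, the induction step via the recursion \eqref{lhz} with error control from \refL{LZ0} and \refL{LFFK}, conversion between the $(1-z)^{\cdot}L(z)^{\cdot}$ and $\Li_{\cdot,\cdot}$ forms via \refL{Liar}, and identification of the leading coefficient from the $m=2$ diagonal terms. The paper organizes the step slightly differently---it first prunes \eqref{lhz} down to the three-term formula \eqref{ilzc8} and only then expands---but the content is the same.

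There is one real oversight. You write ``the case $s=2$ (so $\ell=r=1$)'', but $s=2$ also includes $(\ell,r)=(2,0)$ and $(0,2)$, which you never handle; your induction step is stated only for $s\ge3$, so these fall through. They are needed: $M_{2,0}$ and $M_{0,2}$ appear in the double sum for $s\ge3$. The paper fills this by invoking \eqref{the4} (the case $\ga_1+\ga_2\neq0$ of the second-moment computation) with $\ga_1=\ga_2=\pm\ga$, obtaining \eqref{lth} with only $k=0$ terms; you should do the same.

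Two minor imprecisions, neither fatal. First, your formula $j=r_0-\ell_0+\sum_i j_i$ assumes every Hadamard pass with $B_{\ga_i}$ selects the $\Li_{-\ga_i}$ summand, but the $\Li_0$ summand is also present, so the actual contribution to $j$ from the passes lies in $\{-\ell_0,\dots,0\}+\{0,\dots,r_0\}$; your range conclusion $-\ell\le j\le r$ is still correct. Second, the phrase ``Hadamard passes with $B_{\ga_i}$ forfeit a power of $L(z)$'' is misleading: by \eqref{lilix} the Hadamard itself preserves the second polylogarithm index. What actually caps $k$ when $\ell_0+r_0>0$ is that the $M$-factor then has strictly smaller first or second argument, so $k\le(\ell-\ell_0)\land(r-r_0)<\ell$ by your own saturation inequality.
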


\begin{proof}
Note first that for $\ell+r=1$, \eqref{lth} follows from \eqref{theo}.
(We see also from \eqref{theoli} that \eqref{lthli} would hold if we add a
constant term; the problem is that $\Li_1(z)$ is $L(z)$ and not a constant.)

Assume in the rest of the proof that $\ell+r\ge2$.
Then the expansions \eqref{lth} and \eqref{lthli} are
equivalent by \refL{Liar}; furthermore, for the leading terms,
\eqref{ltha} and \eqref{lthb} are equivalent by \eqref{liar20}.

Consider next the case $\ell+r=2$.
If $(\ell,r)=(2,0)$ or $(0,2)$, we use \eqref{the4} with
$\ga_1=\ga_2=\pm\ga$ and obtain \eqref{lth} by \eqref{mr} and
\eqref{lgdyy}.
(Now only terms with $k=0$  appear.)

If $\ell=r=1$, we similarly use \eqref{the0}, \eqref{mr} and \eqref{lgdyy}
and obtain \eqref{lth} including a single term with $k=1$, viz.\
$\kk_{1,1;0,1}L(z)(1-z)^{-1/2}$ with $\kk_{1,1;0,1}$ given by \eqref{ltha} and 
\eqref{lthkk1}.

For $\ell+r\ge3$, we use induction on $\ell+r$.
By the induction hypothesis \eqref{lth} (including the case $\ell+r=1$
just proved by \eqref{theo}), we have for every $(\ell',r')$ with 
$1\le\ell'+r'<\ell+r$, 
\begin{align}\label{ita0}
  M_{\ell',r'}(z) = \Oz{-\frac12(\ell'+r')+\frac12-\eps}.
\end{align}
Consequently, 
for a typical term in \eqref{lhz},
as in \eqref{lc1qq} and
using again \refL{LZ0}, 
\begin{align}\label{ilc1qq}
& zM_{\ell_1,r_1}(z)\dotsm M_{\ell_m,r_m}(z)\Phi\xxm\yz
= O\bigpar{|1-z|^{-\frac12\sumim (\ell_i+r_i)+\frac{1}{2}m-\eps}\Phi\xxm\yz}
\notag\\
&\hskip4em
=
  \begin{cases}
    O\bigpar{|1-z|^{-\frac12(\ell+r-\ell_0-r_0)+\frac12m-\eps}}, & m\le 2,
\\
  O\bigpar{|1-z|^{-\frac12(\ell+r-\ell_0-r_0)+1+\gdd-\eps}}, & m\ge3.
  \end{cases}
\end{align}
Again the exponent here is $<0$, and it follows by
 \eqref{Box3} and \refL{LFFK} 
that
\begin{align}\label{ilc3}
B_{\ga_1}(z)^{\odot\ell_0} \odot 
B_{\ga_2}(z)^{\odot r_0} \odot 
\bigsqpar{zM_{\ell_1,r_1}(z)\dotsm M_{\ell_m,r_m}(z)\Phi\xxm\yz}
\notag\\
=
  \begin{cases}
    O\bigpar{|1-z|^{-\frac12(\ell+r)+\frac12(\ell_0+r_0)+\frac12m-\eps}}, & m\le 2,
\\
  O\bigpar{|1-z|^{-\frac12(\ell+r)+\frac12(\ell_0+r_0)+1+\gdd-\eps}}, & m\ge3.
  \end{cases}
\end{align}
As in the proof of \refL{LZC2},
except in the two cases (1) $m=1$ and $\ell_0+r_0=1$ and 
(2) $m=2$ and $\ell_0=r_0=0$ 
we have $m+\ell_0+r_0\ge3$, and then
the exponent in \eqref{ilc3} is $\ge -\frac12(\ell+r)+1+\gdd-\eps$.
Consequently, by \eqref{lhz}--\eqref{mr},
\begin{samepage}
\begin{align}\label{ilc8}
  R_{l,r}(z) &= 
\ell B_{\ga_1}(z) \odot \bigsqpar{z M_{\ell-1,r}(z) \Phi'\yz}
+r B_{\ga_2}(z) \odot \bigsqpar{z M_{\ell,r-1}(z) \Phi'\yz}
\notag\\&\qquad
+ \frac12\sumsum_{0<i+j<\ell+r}
\binom{\ell}{i}\binom{r}{j} z M_{i,j}(z)M_{\ell-i,r-j}(z) \Phi''\yz
\notag\\&\qquad
+\Oz{-\frac12(\ell+r)+1+\gdd-\eps}
.\end{align}
\end{samepage}%
As in \eqref{lc8}--\eqref{lzc8} and \eqref{the1}--\eqref{the2}, this can be
simplified,
using \eqref{lz0a}--\eqref{lz0b}, \eqref{ita0}, \eqref{Box3} and \refL{LFFK}, 
and we obtain
\begin{samepage}
\begin{align}\label{ilzc8}
  R_{l,r}(z) &= 
\ell B_{\ga_1}(z) \odot  M_{\ell-1,r}(z)
+r B_{\ga_2}(z) \odot  M_{\ell,r-1}(z) 
\notag\\&\qquad
+ \frac{\gss}2\sumsum_{0<i+j<\ell+r}
\binom{\ell}{i}\binom{r}{j} M_{i,j}(z)M_{\ell-i,r-j}(z)
+\Oz{-\frac12(\ell+r)+1+\gdd-\eps}.
\end{align}
\end{samepage}%
By the induction hypothesis in the form \eqref{lthli} and \eqref{Boxli}, 
using as always \refL{LFFK} for the error term,
we have
\begin{align}
 B_{\ga_1}(z) \odot  M_{\ell-1,r}(z)
&=
\sum_{j,k}\hkk_{\ell-1,r;j,k}
\Li_{(4-\ell-r)/2+j\ga,k}(z)
\odot\bigpar{\Li_{-\ga}(z)-\mu(\ga)\Li_0(z)}
\notag\\&\hskip4em
+\Oz{-\frac12(\ell+r)+1+\gdd-\eps}
\end{align}
summing over $-(\ell-1)\le j\le r$ and $0\le k\le (\ell-1)\land r$.
By \eqref{lilix}, this can be rearranged as
\begin{align}\label{win1}
  \sum_{j,k}\ci_{\ell,r;j,k}
\Li_{(4-\ell-r)/2+j\ga,k}(z)
+\Oz{-\frac12(\ell+r)+1+\gdd-\eps},
\end{align}
now summing over 
$-\ell\le j\le r$ and $0\le k\le (\ell-1)\land r$.
By \refL{Liar}, this can also be  written
\begin{align}\label{win2}
  \sum_{j,k}\cii_{\ell,r;j,k}
(1-z)^{(2-\ell-r)/2+j\ga} L(z)^k
+\Oz{-\frac12(\ell+r)+1+\gdd-\eps},
\end{align}
still summing over $-\ell\le j\le r$ and $0\le k\le (\ell-1)\land r$.

By symmetry, $B_{\ga_2}(z) \odot  M_{\ell,r-1}(z) $ can also be written as 
\eqref{win2} (with different coefficients $\cii_{\ell,r;j,k}$), 
now summing over
$-\ell\le j\le r$ and $0\le k\le \ell\land (r-1)$.

Finally, the double sum in \eqref{ilzc8} can by the induction hypothesis
\eqref{lth} also be written as \eqref{win2}, summing over 
$-\ell\le j\le r$ and $0\le k\le \ell\land r$.

Consequently, \eqref{ilzc8} yields 
\begin{align}\label{win}
R_{\ell,r}(z) =  
\sum_{j,k}\ciii_{\ell,r;j,k}
(1-z)^{(2-\ell-r)/2+j\ga} L(z)^k
+\Oz{-\frac12(\ell+r)+1+\gdd-\eps},
\end{align}
summing over $-\ell\le j\le r$ and $0\le k\le \ell\land r$.
By \eqref{mr} and \eqref{lgdyy}, this implies \eqref{lth}, which completes
the induction proof of \eqref{lth}--\eqref{lthli}.

Now consider the case $\ell=r\ge2$.
 We see that then the only terms above with $k=\ell=r$
come from the double sum in \eqref{ilzc8}; moreover, they appear only for
terms there with $i=j$, and we obtain by induction
that the only non-zero coefficient 
in \eqref{win} with $k=\ell$ is, using \eqref{ltha},
\begin{align}\label{winc}
  \ciii_{\ell,\ell;0,\ell}
=\frac{\gss}2\sum_{i=1}^{\ell-1}\binom{\ell}{i}^2\kk_{i,i;0,i}\kk_{\ell-i,\ell-i;0,\ell-i}
=\frac12\gs^{-2\ell}\sum_{i=1}^{\ell-1}\binom{\ell}{i}^2\kkx_{i}\kkx_{\ell-i}
\end{align}
Hence, when deriving \eqref{lth} from \eqref{win} by \eqref{mr} and
\eqref{lgdyy},  we also find that the only non-zero coefficient with
$k=\ell$ is
\begin{align}\label{winkk}
  \kk_{\ell,\ell;0,\ell}
=2\qqw\gs\qw  \ciii_{\ell,\ell;0,\ell}
=2^{-3/2}\gs^{-2\ell-1}\sum_{i=1}^{\ell-1}\binom{\ell}{i}^2\kkx_{i}\kkx_{\ell-i}
.\end{align}
This proves \eqref{ltha} and \eqref{lthkk}.
\end{proof}

The recursion \eqref{lthkk} is the same as \cite[(D.6)]{SJ359}, and thus has
the same solution \cite[(D.10)]{SJ359}
\begin{align}\label{alla}
\kkx_\ell
= 2^{3/2} \frac{\ell!\,(2\ell-2)!}{(\ell-1)!}d_1^\ell,
\end{align}
with,
by \cite[(D.9)]{SJ359}
and \eqref{lthkk1},
\begin{align}\label{id1}
d_1&:=2^{-3/2}\kkx_{1}
=\frac{1}{4\sqrt{\pi}}\Re\frac{\gG(\ga-\half)}{\gG(\ga)}
.\end{align}

\begin{proof}[Proof of \refT{Tit}]
We have $\ga=\ii t$.
If $\ell+r\ge2$, then
\eqref{Mell2},
 \eqref{lthli}, \eqref{lixr}, and  singularity analysis
yield
\begin{align}\label{ixnl}
q_n\E \bigsqpar{\dF_{\ga}(\ctn)\strut^\ell\,\overline{\dF_{\ga}(\ctn)}^r} =
q_n\E \bigsqpar{\dF_{\ga}(\ctn)^\ell\dF_{\bga}(\ctn)^r} =
O\bigpar{
n^{(\ell+r-3)/2}(\log n)^{\ell\land r}}
.\end{align}
When $\ell=r$, we find more  precisely
\begin{align}\label{ixnl=}
q_n\E \bigsqpar{\dF_{\ga}(\ctn)\strut^\ell\,\overline{\dF_{\ga}(\ctn)}^\ell} 
=
\hkk_{\ell,\ell;0,\ell}
n^{(2\ell-3)/2}(\log n)^{\ell}
+O\bigpar{n^{(2 \ell-3)/2}(\log n)^{\ell-1}}
.\end{align}
Hence, using \eqref{qn} and \eqref{lthb},
\begin{align}\label{ignu}
\E \bigsqpar{\dF_{\ga}(\ctn)\strut^\ell\,\overline{\dF_{\ga}(\ctn)}^r} =
  \begin{cases}
    O\bigpar{n^{(\ell+r)/2}(\log n)^{\ell\land r}},
& \ell\neq r,
\\
\gs^{-2\ell}\frac{\sqrt{2\pi}}{\gG(l-\frac12)}\kkx_\ell
n^{\ell}(\log n)^{\ell}
+ O\bigpar{n^{\ell}(\log n)^{\ell-1}},
& \ell=r.
  \end{cases}
\end{align}
Consequently,
\begin{align}\label{itussi}
\frac{\E \bigsqpar{\dF_{\ga}(\ctn)\strut^\ell\,\overline{\dF_{\ga}(\ctn)}^r}}
{(n\log n)^{(\ell+r)/2}}
\to
  \begin{cases}
    0,& \ell\neq r,
\\
\gs^{-2\ell}\frac{\sqrt{2\pi}}{\gG(l-\frac12)}\kkx_\ell,
&\ell=r\ge1.
  \end{cases}
\end{align}
Furthermore, \eqref{itussi} holds also for $\ell+r=1$ by
\eqref{ixp}.

For $\ell=r$, the limit in \eqref{itussi} is by 
 \eqref{alla} and \eqref{id1}, 
\cf{} \cite[(D.11)]{SJ359},
\begin{align}\label{nov}
\gs^{-2\ell}\frac{\sqrt{2\pi}}{\gG(\ell-\frac12)}\kkx_{\ell}
&=
\gs^{-2\ell}\frac{4\sqrt{\pi}}{\gG(\ell-\frac12)}
\frac{\ell!\,(2\ell-2)!}{(\ell-1)!} d_1^\ell
=\gs^{-2\ell}2^{2\ell}\ell!\,d_1^\ell
\notag\\&
=\bigpar{4d_1\gs\qww}^\ell\cdot \ell! 
=
\Bigpar{
\frac{1}{\sqrt{\pi}\gss}\Re\frac{\gG(\ga-\half)}{\gG(\ga)}}^\ell
\cdot
\ell!
.\end{align}
Consequently, by \eqref{cnormal},
the limits in \eqref{itussi} are the moments of a symmetric
complex normal
distribution with variance \eqref{tit2}, and thus \eqref{tit} follows by
the method of moments. (Recall that $\dF_\ga(\ctn)=X_n(\ga)-\mu(\ga)n$ by
\eqref{FX}.) 

Finally, the claim in \eqref{tit2} that the variance is nonzero
follows from the same claim in \eqref{ri2} (where the variance is the same
up to a factor  $\gs^2 / 2$), which is shown in \cite[Theorem D.1]{SJ359}, with
the correction in 
\refApp{Atypo} below.
\end{proof}

\subsection{Joint distributions}\label{S:Ijoint}

We can extend the arguments above to joint distributions of several
$X_n(\ga)$ with different imaginary $\ga$.
Since we have $X_n(\bga)=\overline{X_n(\ga)}$, it suffices to consider the
case $\Im\ga>0$. In this case, different $X_n(\ga)$ are asymptotically
independent, as is stated more precisely in the following theorem.

\begin{theorem}\label{Tjoint}
  For any finite set\/ $t_1,\dots,t_n$ of distinct positive numbers,
the complex random variables 
$\bigpar{X_n(\ii t_k)-\mu(\ii t_k)n}/\sqrt{n\log n}$
converge, as \ntoo, jointly in distribution to
independent symmetric complex normal variables $\zeta_{\ii t_k}$ with variances
given by \eqref{tit2}.
\end{theorem}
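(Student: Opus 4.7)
The plan is to extend the moment analysis of \refS{S:I} to joint moments of $\dF_{\ii t_1}(\ctn),\ldots,\dF_{\ii t_K}(\ctn)$ and their complex conjugates, and then invoke the method of moments. For $K$-tuples $\boldsymbol\ell,\boldsymbol r$ of nonnegative integers, introduce the joint generating function
\begin{align*}
M_{\boldsymbol\ell,\boldsymbol r}(z):=\E\Bigsqpar{\prod_{k=1}^K \dF_{\ii t_k}(\cT)^{\ell_k}\dF_{-\ii t_k}(\cT)^{r_k}\,z^{|\cT|}}.
\end{align*}
The root decomposition that gave \eqref{lhz} produces a formally identical recursion for $M_{\boldsymbol\ell,\boldsymbol r}(z)$, now with $2K$ Hadamard factors $B_{\pm\ii t_k}(z)$, each appearing with its own exponent, and with multinomial coefficients $\prod_k\binom{\ell_k}{\ell_{k,0},\dots,\ell_{k,m}}\binom{r_k}{r_{k,0},\dots,r_{k,m}}$ distributing every $\ell_k$ and $r_k$ separately among the $m$ children of the root.

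The first step is to prove by induction on $|\boldsymbol\ell|+|\boldsymbol r|$ the multivariate analogue of \refL{Lth}: for $|\boldsymbol\ell|+|\boldsymbol r|\ge 2$,
\begin{align*}
M_{\boldsymbol\ell,\boldsymbol r}(z)=\sum_{\boldsymbol j,m}\kk_{\boldsymbol\ell,\boldsymbol r;\boldsymbol j,m}(1-z)^{(1-|\boldsymbol\ell|-|\boldsymbol r|)/2+\ii\sum_k j_k t_k}L(z)^m+\Oz{(1-|\boldsymbol\ell|-|\boldsymbol r|)/2+\gdd-\eps},
\end{align*}
summed over $-\ell_k\le j_k\le r_k$ and $0\le m\le\sum_k(\ell_k\wedge r_k)$. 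The inductive step copies the proof of \refL{Lth}: by \refLs{LFFK} and \ref{Liar} together with the singular expansions \eqref{Boxli}--\eqref{Box3} of the $B_{\pm\ii t_k}(z)$ and the estimates of \refL{LZ0} for $\Phi^{(m)}(y(z))$, the recursion reduces modulo acceptable error to Hadamard products $B_{\pm\ii t_k}(z)\odot M_{\boldsymbol\ell',\boldsymbol r'}(z)$ plus the double sum $\frac{\gss}{2}\sum\prod_k\binom{\ell_k}{i_k}\binom{r_k}{j_k}\,M_{\boldsymbol i,\boldsymbol j}(z)M_{\boldsymbol\ell-\boldsymbol i,\boldsymbol r-\boldsymbol j}(z)$, and each of these preserves the stated form.

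Because $\ell_k\wedge r_k\le(\ell_k+r_k)/2$ with equality iff $\ell_k=r_k$, the maximal $L(z)$-exponent $\sum_k(\ell_k\wedge r_k)$ in the expansion satisfies $\sum_k(\ell_k\wedge r_k)\le(|\boldsymbol\ell|+|\boldsymbol r|)/2$, with strict inequality unless $\boldsymbol\ell=\boldsymbol r$. Singularity analysis combined with \eqref{qn}, exactly as in \eqref{ixnl}--\eqref{itussi}, then gives
\begin{align*}
\frac{\E\prod_k\dF_{\ii t_k}(\ctn)^{\ell_k}\dF_{-\ii t_k}(\ctn)^{r_k}}{(n\log n)^{(|\boldsymbol\ell|+|\boldsymbol r|)/2}}\longrightarrow 0\qquad\text{whenever }\boldsymbol\ell\neq\boldsymbol r,
\end{align*}
since the oscillating factors $n^{-\ii\sum j_k t_k}$ arising from $\boldsymbol j\neq\boldsymbol 0$ appear only in terms with strictly sub-maximal log-power and are therefore absorbed into $o(1)$. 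When $\boldsymbol\ell=\boldsymbol r$, the top-log-power contribution in the double sum comes only from the diagonal $M_{\boldsymbol i,\boldsymbol i}M_{\boldsymbol\ell-\boldsymbol i,\boldsymbol\ell-\boldsymbol i}$ terms, yielding for the renormalized leading coefficient $\tilde\kkx_{\boldsymbol\ell}:=\gs^{2|\boldsymbol\ell|+1}\kk_{\boldsymbol\ell,\boldsymbol\ell;\boldsymbol 0,|\boldsymbol\ell|}$ the convolution recursion
\begin{align*}
\tilde\kkx_{\boldsymbol\ell}=2^{-3/2}\sum_{\boldsymbol 0\neq\boldsymbol i\neq\boldsymbol\ell}\prod_{k=1}^K\binom{\ell_k}{i_k}^2\,\tilde\kkx_{\boldsymbol i}\tilde\kkx_{\boldsymbol\ell-\boldsymbol i}\qquad(|\boldsymbol\ell|\ge 2),
\end{align*}
a direct multivariate generalization of \eqref{lthkk}.

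The main obstacle is to identify the explicit solution and match it with the independent-normal prediction. Guided by \eqref{nov}, I would verify by induction on $|\boldsymbol\ell|$ that
\begin{align*}
\tilde\kkx_{\boldsymbol\ell}=\frac{\gG(|\boldsymbol\ell|-\tfrac12)}{\sqrt{2\pi}}\,2^{|\boldsymbol\ell|/2}\prod_{k=1}^K\ell_k!\,\bigpar{\kkx_1(t_k)}^{\ell_k},
\end{align*}
where $\kkx_1(t_k)=(2\pi)^{-1/2}\Re\bigsqpar{\gG(\ii t_k-\tfrac12)/\gG(\ii t_k)}$ as in \eqref{lthkk1}. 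Substituting into the recursion and applying the Vandermonde identity $\sum_{\boldsymbol i:|\boldsymbol i|=i}\prod_k\binom{\ell_k}{i_k}=\binom{|\boldsymbol\ell|}{i}$ to collapse the multivariate sum reduces the verification to the single-parameter identity $\sum_{i=1}^{L-1}\binom{L}{i}\gG(i-\tfrac12)\gG(L-i-\tfrac12)=2^{3/2}\sqrt{2\pi}\,\gG(L-\tfrac12)$ ($L=|\boldsymbol\ell|\ge 2$), which is precisely what makes the one-dimensional recursion \eqref{lthkk} have the solution \eqref{alla}; it therefore holds automatically. Plugging back and running singularity analysis as in \eqref{nov} yields
\begin{align*}
\lim_{\ntoo}\frac{\E\prod_k\dF_{\ii t_k}(\ctn)^{\ell_k}\dF_{-\ii t_k}(\ctn)^{\ell_k}}{(n\log n)^{|\boldsymbol\ell|}}=\prod_{k=1}^K\bigpar{\E|\zeta_{\ii t_k}|^2}^{\ell_k}\ell_k!,
\end{align*}
which by \eqref{cnormal} is the mixed moment of independent symmetric complex normal variables with variances \eqref{tit2}. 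Since each $\zeta_{\ii t_k}$ is determined by its moments, the method of moments delivers the joint convergence in distribution.
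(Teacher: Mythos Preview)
Your proof is correct and follows the same overall framework as the paper---extend the moment-generating-function recursion to mixed moments, establish a singular expansion by induction, identify the leading $L(z)$-power, and apply the method of moments---but the decisive combinatorial step is handled differently.

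The paper indexes by an arbitrary sequence $A=(\ga_1,\dots,\ga_\ell)$ of imaginary exponents, shows that the leading $L(z)^{\ell/2}$-coefficient $\kky_A$ is nonzero only when~$A$ is \emph{balanced} (partitionable into pairs $\{\ga_i,-\ga_i\}$), and then unfolds the quadratic recursion for $\kky_A$ into a sum over full binary trees with $k=\ell/2$ leaves labelled by balanced pairs. A direct count ($C_{k-1}$ trees, $k!$ leaf assignments, $\prod_j k_j!$ pairings) yields the product $\prod_j \gs_{\ii t_j}^{2k_j}k_j!$ after a short Gamma-function computation. Your route instead parametrizes by exponent-tuples $(\boldsymbol\ell,\boldsymbol r)$, observes that the leading log-power $\sum_k(\ell_k\wedge r_k)$ forces $\boldsymbol\ell=\boldsymbol r$ and that the leading-coefficient recursion is the multivariate analogue of~\eqref{lthkk}, and then verifies a guessed closed form by using $\binom{\ell_k}{i_k}^2 i_k!(\ell_k-i_k)!=\ell_k!\binom{\ell_k}{i_k}$ together with Vandermonde's identity to collapse the multivariate sum to the one-variable Gamma identity already implicit in~\eqref{alla}. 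This algebraic reduction is a clean alternative to the tree-counting argument: it reuses the single-$\ga$ solution as a black box rather than re-deriving the constants combinatorially. The paper in fact remarks (just after its proof) that the tree-counting argument, specialized to a single~$t$, gives an alternative proof of~\eqref{alla}; your argument runs this implication in the opposite direction.
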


This can be interpreted as joint convergence 
(in the product topology)
of the entire family
$\set{X_n(\ii t):t>0}$ of random variables, after normalization, 
to an (uncountable) family of
\emph{independent} symmetric complex normal variables $\zetat$.
As said in \refR{Rjoint}, this behaviour is strikingly different from the
cases $\rea<0$ and $\rea>0$, where we have joint convergence to analytic
random functions of $\ga$.

\begin{proof}
  We argue as above, using the method of moments and singularity analysis of
  generating  functions, with mainly notational differences.
We give only a sketch, leaving further details to the reader.

For a sequence of arbitrary non-zero imaginary numbers $\ga_1,\dots,\ga_\ell$
(allowing repetitions), define the generating function
  \begin{align}
\label{Mellj}
  M_{\ga_1,\dots,\ga_\ell}(z)
&:=\E \bigsqpar{\dF_{\ga_1}(\cT)\cdots\dF_{\ga_\ell}(\cT) z^{|\cT|}}
=\sumn q_n 
\E\bigsqpar{\dF_{\ga_1}(\ctn)\cdots\dF_{\ga_\ell}(\ctn)} z^n
.\end{align}
When $\ell=1$ and $2$, these are the same as $M_{\ga_1}(z)$ or $M_{1,1}(z)$
in the notation used above.
The recursion \eqref{lhz} extends as follows.
We write again
\begin{align}
    M_{\ga_1,\dots,\ga_\ell}(z)
=\frac{zy'(z)}{y(z)} R_{\ga_1,\dots,\ga_\ell}(z).
\end{align}
Then, by a straightforward extension of the proof of \cite[Lemma 12.4]{SJ359},
\cf{}\eqref{lhz},
\begin{align}\label{rec}
   R_{\ga_1,\dots,\ga_\ell}(z)
=\sum_{m=0}^\ell\frac{1}{m!}
\sum B_{\ga_{i_1}}(z)\odot\cdots\odot B_{\ga_{i_{q}}}(z)\odot
\bigsqpar{zM_{A_1}(z)\dotsm M_{A_m}(z)\Phi^{(m)}\yz}
\end{align}
where we sum over all partitions of $[\ell]:=\set{1,\dots,\ell}$ into an ordered
sequence of $m+1$ sets
$I_0,\dots,I_m$ with $I_1,\dots,I_m$ neither empty nor equal to the full set
$[\ell]$
(while $I_0$ may be empty or equal to $[\ell]$),
and $i_j$ are defined by
$I_0=\set{i_1,\dots,i_q}$ and, for $1\le j\le m$,
$A_j$ is the sequence $(\ga_i:i\in I_j)$.

As in \refL{Lth}, it follows by induction that for any sequence
$A=(\ga_1,\dots,\ga_\ell)$ of length $|A|=\ell\ge2$, 
\begin{align}
\label{lth+}
  M_{A}(z)
&=
\sum_{\gb,k}\kk_{A;\gb,k} (1-z)^{(1-\ell)/2+\gb} L(z)^k
+\Oz{\frac12(1-\ell)+\gdde}
\\\label{lthli+}
&=
\sum_{\gb,k}\hkk_{A;\gb,k}\Li_{(3-\ell)/2+\gb,k}(z)
+\Oz{\frac12(1-\ell)+\gdde}
,\end{align}
where we sum over 
$0\le k\le \ell/2$
and all $\gb$ such that $-\gb$ equals the sum of some subsequence of $A$.
Moreover, as above, it is seen by induction
that the only non-zero coefficients 
with $k=\ell/2$ have $\gb=0$, and that they appear only when $\ell$ is even
and $A$ is balanced in the sense that it
can be partitioned into $\ell/2$ pairs $\set{\ga_i,-\ga_i}$.
We now write $\kky_A:=\kk_{A;0,k}$ if $A$ is balanced with $|A|=2k$.
(We let $\kky_A:=0$ if $A$ is not balanced.)
These leading terms come from the case $m=2$ and $q=0$ in \eqref{rec}, and
we obtain the recurrence, for $|A|\ge4$,
\begin{align}\label{qkkya}
  \kky_A = 2^{-3/2}\gs \sum \kky_{A_1}\kky_{A_2},
\end{align}
summing over all partitions of $A$ into two nonempty sets $A_1$ and $A_2$
that both are balanced.

It follows by induction from \eqref{qkkya} that if $|A|=2k\ge2$, then 
$\kky_A$ can be written as a sum
\begin{align}\label{per}
  \kky_A= \bigpar{2^{-3/2}\gs}^{k-1} \sum \prod_{j=1}^k\kky_{A_j},  
\end{align}
where we sum
over full 
binary trees with $k$ leaves, where
each leaf is labelled by a  pair $I_j$ of indices 
such that $I_1,\dots,I_k$ form a partition of $[2k]$, and furthermore 
the corresponding sets $A_j$ are balanced, 
\ie, $\ga_i+\ga_{i'}=0$ if $I_j=\set{i,i'}$.

Let $A=(\ga_1,\dots,\ga_{2k})$ consist of the numbers $\ii t_j$ and $-\ii t_j$
repeated $k_j$ times each, 
for $j=1,\dots,r$,
where $t_1,\dots,t_r$ are distinct and positive;
thus $|A|=2k$ with $k=\sum_j k_j$.
Then 
there are $\prod_j k_j!$ ways to partition $A$ into balanced pairs, and
for each binary tree with $k$ leaves,
these pairs can be assigned to the $k$ leaves in $k!$ ways.
Each tree and each assignment of balanced pairs $A_i$ gives the same
contribution to the sum \eqref{per}, and we obtain,
since there are $C_{k-1}=(2k-2)!/(k!(k-1)!)$ full binary trees with $k$ leaves,
\begin{align}\label{paab0}
  \kky_A= \bigpar{2^{-3/2}\gs}^{k-1} \frac{(2k-2)!}{(k-1)!}
\prod_{j=1}^r{\left[ (\kky_{\set{\pm\ii t_j}})^{k_j}k_j! \right]}.
\end{align}

Let $\gsst$ be the variance of $\zetat$ in \eqref{tit2}.
For the case $A=\set{\ii t,-\ii t}$, \refL{Lth} applies and we have by
\eqref{ltha} and \eqref{lthkk1}, in the present  notation,
\begin{align}\label{qkky2}
\kky_{\set{\pm\ii t}}=2\qqw\gs\qw\gsst.  
\end{align}
Hence, \eqref{paab0} yields 
\begin{align}\label{paab1}
  \kky_A= 2^{-2k+\frac32}\gs^{-1} \frac{(2k-2)!}{(k-1)!}
\prod_{j=1}^r \left( \gs_{\ii t_j}^{2k_j} k_j! \right).
\end{align}
Since $\hkk_{A;0,k}=\gG(k-\frac12)\qw\kky_A$, we finally obtain from
\eqref{lthli+}, using \eqref{qn}, that
\begin{align}\label{paab2}
n^{-k}  \E\bigsqpar{\dF_{\ga_1}(\ctn)\cdots\dF_{\ga_{2k}}(\ctn)}
&\to 2^{-2k+2}\sqrt\pi \frac{(2k-2)!}{\gG(k-\frac12)(k-1)!}
\prod_{j=1}^r \left( \gs_{\ii t_j}^{2k_j} k_j! \right)
\notag\\&
=\prod_{j=1}^r \left( \gs_{\ii t_j}^{2k_j}k_j! \right),
\end{align}
which equals the corresponding mixed moment
$\E\bigpar{\zeta_{\ga_1}\cdots\zeta_{\ga_{2k}}}
=\prod_j\E|\zeta_{\ii t_j}|^{2k_j}$, 
see \eqref{cnormal}.
Similarly, all mixed moments with unbalanced indices converge after
normalization to 0.
Hence, the result follows by the method of moments.
\end{proof}

Note that the combinatorial argument in the final part of the proof 
(restricted to the case $r=1$) yields an alternative proof that the recursion
\eqref{winkk} is solved by \eqref{alla}--\eqref{id1}.
Conversely, the argument above without detailed counting of possibilities
shows that the \lhs{} of \eqref{paab2} converges to $c_k$ times the \rhs,
for some combinatorial constant $c_k$ not depending on $k_1,\dots k_r$.
Since \eqref{itussi} shows that the formula is correct for $r=1$, we must
have $c_k=1$, and thus \eqref{paab2} holds.

\section{Negative real part}\label{NEG}
\ccreset

In this section, we consider the case that $\ga$ in \eqref{Xn} has negative real part. Applying the same approach as in previous sections, we prove convergence of all moments for the normalized random variable. As before, we assume throughout the section 
that \eqref{2+gd} holds with $0<\gd<1$. Again, we set
\begin{align}
  b_n:=n^\ga-\mu(\ga),
\end{align}
with the generating function
\begin{align}\label{Bz5}
B(z)&= B_\ga(z):=\sumn b_nz^n =\Li_{-\ga}(z)-\mu(\ga)\Li_0(z).
\end{align}
In contrast to \refS{S:I}, the term $\mu(\ga)\Li_0(z) = \mu(\ga)z(1-z)^{-1}$ now dominates.
For later convenience, we let
$\eta := \min(-\rea,\,\gd/2)$, and note that $0<\eta<\frac12$ (assuming
again $\gd<1$ as we may).
Then \eqref{lia} implies
\begin{align}\label{adv2}
B(z) = -\mu(\ga) (1-z)^{-1} + \Oz{-1+\eta}.
\end{align}
This is even true for $\ga\in\set{-1,-2,\dots}$, where logarithmic terms occur in the asymptotic expansion of $\Li_{-\ga}$, due to the aforementioned fact that $\eta < \frac12$. 

Once again, we let
$\dF(T)=\dF_\ga(T)$ denote the additive functional defined by the toll function
$\df_\ga(T):=b_{|T|}$, so that
\begin{align}
 \dF_\ga(\tn)=X_n(\ga)-n\mu(\ga). 
\end{align}

\subsection{The mean}\label{NEGmean}

We use the same notation for the generating function of the mean as 
in \refS{S:I}, \ie,
\begin{align}
M_\ga(z):=\E \bigsqpar{\dF_\ga(\cT) z^{|\cT|}}
=\sumn q_n \E[\dF_\ga(\tn)] z^n,
\end{align}
and note that \eqref{sx1} still holds:
\begin{align}
  M_\ga(z) =\frac{zy'(z)}{y(z)}\cdot\bigpar{ B_\ga(z)\odot y(z)}.
\end{align}
Thus $M_\ga(z)$ is still \gda.
In analogy with \eqref{sixten2}, we now have
\begin{align}
  B_\ga(z) \odot y(z )
&=  \frac{1}{\sqrt{2\pi}\gs}\Li_{3/2-\ga}(z)
-\frac{\mu(\ga)}{\sqrt{2\pi}\gs}\Li_{3/2}(z)
+\cc+\Oz{\frac12+\frac\gd2}
\notag\\&=
2\qq\gs\qw\mu(\ga)(1-z)\qq
+\cc+\Oz{\frac12+\eta}.
\end{align}
Moreover, \eqref{xhga} still holds, so $\ccx = 0$. Combining this with \eqref{lgdyy} now yields
\begin{align}\label{meanneg}
  M_\ga(z) = \gs\qww \mu(\ga) + \Oz{\eta}.
\end{align}
Applying singularity analysis and \eqref{qn}, we find that
\begin{align} \label{EXn5}
 \E[\dF_\ga(\tn)] = \On{\frac12-\eta}
\end{align}
or equivalently
\begin{align}\label{ixq-}
  \E X_n(\ga)=\mu(\ga)n + \On{\frac12-\eta}.
\end{align}

\subsection{Higher moments}\label{NEGmom}

As in \refS{S:Imom}, we consider the mixed moments 
of $\dF_{\ga_1}(\tn)$ and $\dF_{\ga_2}(\tn)$ for two complex numbers $\ga_1$ and $\ga_2$ that are now both assumed to have negative real part. In particular, this includes the special case that $\ga_2 = \bga_1$. We are thus interested in the generating function
\begin{align}
  M_{\ell_1,\ell_2}(z)
&:=\E \bigsqpar{\dF_{\ga_1}(\cT)^{\ell_1}\dF_{\ga_2}(\cT)^{\ell_2} z^{|\cT|}}
\end{align}
for integers $\ell_1,\ell_2 \ge 0$, \cf\,\eqref{Mell2}. In particular, we have $M_{1,0}=M_{\ga_1}$ and $M_{0,1}=M_{\ga_2}$. 
Set $\eta := \min(-\rea_1,-\rea_2,\,\gd/2)$ (again noting that $\eta < \frac12$). Then by
\eqref{meanneg}
we have
\begin{align}\label{m1001}
 M_{1,0}(z) = \gs\qww \mu(\ga_1) + \Oz{\eta} \text{ and }  M_{0,1}(z) = \gs\qww \mu(\ga_2) + \Oz{\eta}.
\end{align}
In order to deal with higher moments, we make use of the recursion \eqref{lhz}. Let us start with 
second-order moments:\ here, we obtain
\begin{align}\label{m2as1}
  M_{1,1}(z) &= \frac{z y'(z)}{y(z)} \left[ B_{\ga_1}(z)\odot B_{\ga_2}(z)\odot y(z)
+ B_{\ga_1}(z)\odot (zM_{0,1}(z)\Phi'(y(z))) \right.
\notag\\&\qquad
\left.+ B_{\ga_2}(z)\odot (zM_{1,0}(z)\Phi'(y(z)))
+ zM_{1,0}(z)M_{0,1}(z)\Phi''(y(z)) \right].
\end{align}
In view of \eqref{meanneg}, \eqref{lgdy}, \eqref{lz0a}, and \eqref{lz0b}, the functions $y(z)$, $zM_{0,1}(z)\Phi'\bigpar{y(z)}$, $zM_{1,0}(z)\Phi'\bigpar{y(z)}$, and $z M_{1,0}(z)M_{0,1}(z) \Phi''\bigpar{y(z)}$ are all of the form $c + \Oz{\eta}$, and taking the Hadamard product with $B_{\ga_1}(z)$ or $B_{\ga_2}(z)$ does not change this property. Combining this with \eqref{lgdyy} we conclude that there is a constant $\kk_{1,1}$ such that
\begin{align}\label{m2as1x}
M_{1,1}(z) = 2\qqw \gs\qw \kk_{1,1} (1-z)^{-1/2} + \Oz{-\frac12+\eta},
\end{align}
which implies
 by virtue of singularity analysis and~\eqref{qn} that
\begin{align}
\E[\dF_{\ga_1}(\tn)\dF_{\ga_2}(\tn)] = \kk_{1,1}\,n + \On{1 - \eta}.
\end{align}
We can obtain the functions $M_{2,0}(z)$ and $M_{0,2}(z)$ as special cases of $M_{1,1}(z)$ where $\ga_1 = \ga_2$. Hence there are also constants $\kk_{2,0}$ and $\kk_{0,2}$ such that
\begin{align}\label{m2as2}
M_{2,0}(z) = 2\qqw \gs\qw \kk_{2,0} (1-z)^{-1/2} + \Oz{-\frac12+\eta}
\end{align}
and
\begin{align}\label{m2as3}
M_{0,2}(z) = 2\qqw \gs\qw \kk_{0,2} (1-z)^{-1/2} + \Oz{-\frac12+\eta},
\end{align}
and thus
\begin{align}
\E[\dF_{\ga_1}(\tn)^2] = \kk_{2,0}\,n + \On{1 - \eta} \text{ and } \E[\dF_{\ga_2}(\tn)^2] = \kk_{0,2}\,n + \On{1 - \eta}.
\end{align}
We will use these as the base case of an inductive proof of the following lemma.

\begin{lemma}\label{mellsa}
Suppose that $\rea_1 < 0$ and $\rea_2 < 0$, and let 
\begin{align}
  \eta = \min(-\rea_1,-\rea_2,\,\gd/2)
\end{align} 
be as above. 
Then, for all non-negative integers $\ell$ and $r$ with $s = \ell+r \ge 1$, the function $M_{\ell,r}(z)$ is \gda{} and we have
\begin{align} \label{Mz5}
M_{\ell,r}(z) = \hkk_{\ell,r} (1-z)^{(1-s)/2} + \Oz{(1-s)/2+\eta},
\end{align}
where $\hkk_{1,0}=\gs\qww\mu(\ga_1)$, $\hkk_{0,1}=\gs\qww\mu(\ga_2)$, and, for $s\ge2$,
\begin{align}\label{kappalr}
\hkk_{\ell,r} = \frac{(s-3)!!}{\gs 2^{(s-1)/2}} \sum_{\substack{j=0 \\ j \equiv \ell \bmod 2}}^{\ell \land r}
\binom{\ell}{j} \binom{r}{j} j!\, (\ell-j-1)!!\,(r-j-1)!!\,\kk_{1,1}^{j} \kk_{2,0}^{(\ell-j)/2}\kk_{0,2}^{(r-j)/2}
\end{align}
if $s$ is even, and $\hkk_{\ell,r} = 0$ otherwise.
\end{lemma}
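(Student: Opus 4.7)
The plan is to argue by strong induction on $s = \ell + r$. The base cases $s = 1$ and $s = 2$ are already supplied by \eqref{m1001} and by \eqref{m2as1x}--\eqref{m2as3}, respectively; in the $s = 2$ case a direct check confirms that the single surviving summand in \eqref{kappalr} reproduces $\hkk_{\ell,r} = 2^{-1/2}\gs^{-1}\kk_{\ell,r}$. That $M_{\ell,r}(z)$ is \gda{} will propagate through the induction from the recursion \eqref{lhz} together with \refL{LFFK}, exactly as in \refSs{S:shape} and~\ref{S:I}.

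For the inductive step with $s \ge 3$, I apply \eqref{lhz} and sort contributions by singular exponent. The structural novelty compared to \refLs{LZC2} and~\ref{Lth} is that here $B_{\ga_i}(z) = -\mu(\ga_i)(1-z)^{-1} + \Oz{-1+\eta}$ by \eqref{adv2}, so \refL{LFFK} (applied with $a = -1$) shows that a Hadamard product against $B_{\ga_i}$ preserves the leading singular exponent of the other factor. Combining the induction hypothesis $M_{\ell',r'}(z) = \Oz{(1-\ell'-r')/2}$ with \refL{LZ0}, a generic summand of \eqref{lhz} is of order $(1-z)^{-(s-\ell_0-r_0-m)/2}$ for $m \le 2$ and strictly smaller for $m \ge 3$. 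After multiplication by the prefactor $zy'(z)/y(z) \sim 2^{-1/2}\gs^{-1}(1-z)^{-1/2}$, the only summands reaching the claimed leading exponent $(1-z)^{(1-s)/2}$ are the single-Hadamard terms $\ell B_{\ga_1}\odot[zM_{\ell-1,r}\Phi'(y(z))]$ and $r B_{\ga_2}\odot[zM_{\ell,r-1}\Phi'(y(z))]$, together with the quadratic sum $\tfrac12\sum_{0<i+j<s}\binom{\ell}{i}\binom{r}{j}zM_{i,j}(z)M_{\ell-i,r-j}(z)\Phi''(y(z))$; all other terms are absorbed into $\Oz{(1-s)/2+\eta}$.

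To extract the leading coefficient, I use that $(1-z)^{-1}$ is the identity for Hadamard product, so the leading part of $B_{\ga_1}\odot M_{\ell-1,r}(z)$ equals $-\mu(\ga_1)\hkk_{\ell-1,r}(1-z)^{(2-s)/2}$. The quadratic sum, in turn, contains two ``boundary'' pairs involving $\hkk_{1,0}$, namely $(i,j) = (1,0)$ and $(i,j) = (\ell-1,r)$, each contributing $\ell\hkk_{1,0}\hkk_{\ell-1,r}$ at order $(1-z)^{(2-s)/2}$; using the base identity $\hkk_{1,0} = \gs^{-2}\mu(\ga_1)$ and $\Phi''(y(z)) \sim \gs^2$, these two terms precisely cancel the single-Hadamard $B_{\ga_1}$ contribution. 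Symmetric cancellation disposes of the $B_{\ga_2}$ contribution, leaving the clean recursion
\begin{align*}
\hkk_{\ell,r} = \frac{\gs}{2\sqrt{2}}\sum\nolimits^{\flat}\binom{\ell}{i}\binom{r}{j}\hkk_{i,j}\hkk_{\ell-i,r-j},
\end{align*}
where $\sum\nolimits^{\flat}$ runs over $(i,j)$ with $0 < i+j < s$ and $(i,j) \notin \{(1,0),(0,1),(\ell-1,r),(\ell,r-1)\}$---so that in every surviving summand both factors have $s' := i+j \ge 2$ and $s - s' \ge 2$. Vanishing of $\hkk_{\ell,r}$ for $s$ odd is automatic from this recursion: one of the two factors must then have odd total index $\ge 3$ and vanishes by the induction hypothesis.

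Finally I verify that \eqref{kappalr} satisfies this recursion. Reading the bracket in \eqref{kappalr} as the $(\ell,r)$-mixed moment $Q_{\ell,r}$ of a centered Gaussian vector $(Z_1,Z_2)$ with covariances $\kk_{2,0}, \kk_{0,2}, \kk_{1,1}$ (times the prefactor $(s-3)!!/(\gs\, 2^{(s-1)/2})$), the quadratic recursion becomes the familiar Wick/Isserlis convolution: decomposing each perfect pair-matching of $s$ labelled slots into an internally-matched subset and its internally-matched complement, each matching is counted twice, yielding $2(s-3)!!\,Q_{\ell,r} = \sum\nolimits^{\flat}\binom{\ell}{i}\binom{r}{j}(s_1-3)!!(s_2-3)!!\,Q_{i,j}Q_{\ell-i,r-j}$ with $s_1 = i+j$ and $s_2 = s - s_1$. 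Substituting $\hkk_{\ell,r} = \frac{(s-3)!!}{\gs\,2^{(s-1)/2}}Q_{\ell,r}$ into the $\hkk$-recursion and comparing prefactors reproduces exactly this Gaussian identity, closing the induction. The main technical obstacle is the bookkeeping of the semifactorial and power-of-two prefactors, though I do not expect a conceptual difficulty: the Gaussian interpretation is forced by consistency with \refT{TNEG}, which predicts complex-normal limits for $X_n(\ga)$ when $\rea < 0$.
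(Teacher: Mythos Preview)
Your inductive derivation of the recursion for $\hkk_{\ell,r}$ is correct and matches the paper closely. The one organizational difference is that you treat odd and even $s$ uniformly via the cancellation between the $B_{\ga_i}$-Hadamard terms and the boundary pairs $(i,j)\in\{(1,0),(0,1),(\ell-1,r),(\ell,r-1)\}$ of the quadratic sum, whereas the paper splits into cases (for even $s$ the cancellation is trivially $0=0$ since $\hkk_{\ell-1,r}=0$ by induction; for odd $s$ the paper first discards quadratic terms with an odd factor of index $>1$, leaving only the boundary terms to cancel). Both routes arrive at the same recursion $\hkk_{\ell,r}=2^{-3/2}\gs\sum^{\flat}\binom{\ell}{i}\binom{r}{j}\hkk_{i,j}\hkk_{\ell-i,r-j}$.

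For the verification of~\eqref{kappalr} you take a genuinely different route from the paper, which instead sums $\sum\hkk_{\ell,r}x^\ell y^r/(\ell!r!)$ to the closed form $\sqrt{2}/\gs-\gs^{-1}\sqrt{2-(\kk_{2,0}x^2+2\kk_{1,1}xy+\kk_{0,2}y^2)}$ and checks the quadratic equation directly. Your identification of the bracket in~\eqref{kappalr} with the Gaussian mixed moment $Q_{\ell,r}=\E[Z_1^\ell Z_2^r]$ is correct (and is exactly how the paper later reads the formula in the proof of \refT{TNEG}), and substituting $\hkk_{\ell,r}=(s-3)!!\,\gs^{-1}2^{-(s-1)/2}Q_{\ell,r}$ does reduce the recursion to
\[
2(s-3)!!\,Q_{\ell,r}=\sum\nolimits^{\flat}\binom{\ell}{i}\binom{r}{j}(s_1-3)!!\,(s_2-3)!!\,Q_{i,j}Q_{\ell-i,r-j}.
\]
However, your justification ``each matching is counted twice'' is wrong: a fixed perfect matching of the $s$ slots admits $2^{s/2}-2$ ordered splits into two nonempty internally-matched parts, not two. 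What actually makes the identity work is that, for a fixed matching with $k=s/2$ edges, summing the weight $(s_1-3)!!(s_2-3)!!$ over all such splits gives the scalar identity
\[
\sum_{k_1=1}^{k-1}\binom{k}{k_1}(2k_1-3)!!\,(2k_2-3)!!=2(2k-3)!!,
\]
which is exactly the statement that $f(u):=1-\sqrt{1-u}$ solves $f=\tfrac12(f^2+u)$ (equivalently, that $(2k-3)!!$ counts unordered full binary trees on $k$ labelled leaves). You need to supply this step---by either of these one-line arguments---to close the induction; as written, the combinatorial sentence does not prove the displayed identity.
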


\begin{proof}
We 
prove the statement by induction on $s = \ell + r$. Note that \eqref{m1001} as well as \eqref{m2as1x}, \eqref{m2as2}, and \eqref{m2as3} are precisely the cases $s=1$ and $s = 2$, respectively. 

For the induction step, we take
$s \ge 3$ and use recursion \eqref{lhz}. It follows immediately from this recursion that all $M_{\ell,r}$ are $\Delta$-analytic, so we focus on the asymptotic behavior at $1$. Let us first consider the product
\begin{align}
zM_{\ell_1,r_1}(z)\dotsm M_{\ell_m,r_m}(z)\Phi\xxm\yz,
\end{align}
where all $\ell_i$ and $r_i$ are non-negative integers, $1 \le \ell_i + r_i < s$ for 
every $i \ge 1$, $\ell_0 + \ell_1 + \cdots + \ell_m = \ell$, and $r_0 + r_1 + \cdots + r_m = r$. By the induction hypothesis, $M_{\ell_i,r_i}(z) = \Oz{(1-\ell_i-r_i)/2}$ for all $i \ge 1$, which can be improved to $M_{\ell_i,r_i}(z) = \Oz{(1-\ell_i-r_i)/2 + \eta}$ if $\ell_i + r_i$ is odd and greater than $1$. Combining with \eqref{lz0c}, we obtain
\begin{align}
zM_{\ell_1,r_1}(z)\dotsm M_{\ell_m,r_m}(z)\Phi\xxm\yz &= \Oz{(m-\ell_1 - \cdots - \ell_m-r_1-\cdots-r_m)/2 + \gdd + 1 - m/2}\notag  \\ 
&= \Oz{(\ell_0 + r_0 - \ell - r)/2 + 1 + \eta}
\end{align}
for $m \ge 3$. This estimate continues to hold after taking the Hadamard
product with $B_{\ga_1}(z)^{\odot\ell_0} \odot B_{\ga_2}(z)^{\odot r_0}$,
and the factor $\frac{z y'(z)}{y(z)}$ in \eqref{lhz} 
contributes $-\frac12$ to the exponent by \eqref{lgdyy}. Since
$\ell_0$ and $r_0$ are non-negative, it follows that the total contribution of all
terms with $m \ge 3$ is $\Oz{(1- s)/2 + \eta}$ and thus negligible. We
can therefore focus on the cases $m = 0$, $m=1$, and $m=2$. Here,
$\Phi\xxm\yz$ is 
$O(1)$
in all cases by 
\eqref{lz00}--\eqref{lz0b}, and we obtain 
\begin{align}
zM_{\ell_1,r_1}(z)\dotsm M_{\ell_m,r_m}(z)\Phi\xxm\yz &= \Oz{(m-\ell_1 - \cdots - \ell_m-r_1-\cdots-r_m)/2} \notag\\
&= \Oz{(m + \ell_0 + r_0 - \ell - r)/2}.
\end{align}
Terms with
 $m + \ell_0 + r_0 \ge 3$ are negligible for the same reason as before. Likewise, terms with $m + \ell_0 + r_0 = 2$ are negligible if at least one of the sums $\ell_i + r_i$ with $i \ge 1$ is odd and greater than $1$, as we can then improve the bound on $M_{\ell_i,r_i}(z)$. Let us determine all remaining possibilities:
\begin{itemize}
\item $m = 0$ implies $m + \ell_0 + r_0 = \ell + r = s \ge 3$, so we have already accounted for this negligible case.
\item $m = 1$ gives us $\ell_0 + \ell_1 = \ell$ and $r_0 + r_1 = r$ with $1 \le \ell_1 + r_1 < \ell + r$, thus $\ell_0 + r_0 \ge 1$. So we have $(\ell_0,\ell_1,r_0,r_1) = (1,\ell-1,0,r)$ and $(\ell_0,\ell_1,r_0,r_1) = (0,\ell,1,r-1)$ as the only two relevant possibilities in this case.
\item Finally, if $m = 2$, we must have $\ell_0 = r_0 = 0$ and $\ell_1 + \ell_2 = \ell$ and $r_1 + r_2 = r$.
\end{itemize}
Now we divide the argument into two subcases, according as $s = \ell + r$ is even or odd.

\emph{Odd $s \ge 3$.} 
If $m = 2$, $\ell_0 = r_0 = 0$, and $\ell_1 + \ell_2 + r_1  + r_2 = \ell + r  = s$, then either $\ell_1 + r_1$ or $\ell_2 + r_2$ is odd.
Thus the corresponding term is asymptotically negligible unless $\ell_1 + r_1 = 1$ or $\ell_2 + r_2 = 1$.  So in this case, there are only four terms
that might be asymptotically 
relevant: 
\begin{align}
  (\ell_1,\ell_2,r_1,r_2) \in \{(1,\ell-1,0,r), (\ell-1,1,r,0), (0,\ell,1,r-1), (\ell,0,r-1,1)\}.
\end{align}
In addition, $m = 1$ contributes with two terms as mentioned above. Thus we obtain
\begin{align}\label{msodd}
M_{\ell,r}(z) &= \frac{z y'(z)}{y(z)} \left[ \ell B_{\ga_1}(z) \odot \left(zM_{\ell-1,r}(z) \Phi'\bigpar{y(z)}\right) + r B_{\ga_2}(z) \odot \left(zM_{\ell,r-1}(z) \Phi'\bigpar{y(z)}\right) \right. \notag \\
&\qquad \left. + \ell z M_{1,0}(z) M_{\ell-1,r}(z) \Phi''\bigpar{y(z)} + r z M_{0,1}(z) M_{\ell,r-1}(z) \Phi''\bigpar{y(z)} \right] \notag \\
&\quad +  \Oz{(1-s)/2+\eta}.
\end{align}
By the induction hypothesis, $M_{\ell-1,r}(z) =  \hkk_{\ell-1,r} (1-z)^{1-\frac{s}{2}} + \Oz{1-\frac{s}{2}+\eta}$. 
Consequently, using
\eqref{meanneg}, \eqref{lz0a}, and \eqref{lz0b}, we get
\begin{align}
zM_{\ell-1,r}(z) \Phi'\bigpar{y(z)} &= \hkk_{\ell-1,r} (1-z)^{1-\frac{s}{2}} + \Oz{1-\frac{s}{2}+\eta}, \\
z M_{1,0}(z) M_{\ell-1,r}(z) \Phi''\bigpar{y(z)} &= \mu(\ga_1) \hkk_{\ell-1,r} (1-z)^{1-\frac{s}{2}} + \Oz{1-\frac{s}{2}+\eta}.
\end{align}
Recall from~\eqref{Bz5} that $B_{\ga_1}(z) = \Li_{-\ga_1}(z)-\mu(\ga_1)\Li_0(z)$. 
Applying the Hadamard product gives us, using \eqref{lia}, \eqref{lili}, and \refL{LFFK}, 
\begin{align}
B_{\ga_1}(z) \odot \left(zM_{\ell-1,r}(z) \Phi'\bigpar{y(z)}\right) = -\mu(\ga_1) \hkk_{\ell-1,r} (1-z)^{1-\frac{s}{2}} 
+ \Oz{1-\frac{s}{2}+\eta},
\end{align}
so the first and third terms in \eqref{msodd} effectively cancel, and the same argument applies to the second and fourth terms. Hence we have proven the desired statement in the case that $s$ is odd.

\emph{Even $s \ge 4$.}
In this case, we can neglect the terms with $m = 1$ and $\ell_1 + r_1 = \ell + r -1 = s-1$, since $s-1$ is odd and greater than $1$. Thus only terms with $m = 2$ and $\ell_0 = r_0 = 0$ matter. For the same reason, we can ignore all terms where $\ell_1+r_1$ and $\ell_2+r_2$ are odd: at least one of them has to be greater then $1$, making all such terms asymptotically negligible.
Hence we obtain
\begin{align}
M_{\ell,r}(z) &= \frac{z y'(z)}{y(z)} \cdot \frac12 \sum_{\substack{\ell_1,\ell_2,r_1,r_2 \\ \ell_1+\ell_2 = \ell,\,r_1+r_2 = r \\ \ell_i + r_i \text{ even and }> 0}} \binom{\ell}{\ell_1} \binom{r}{r_1} z M_{\ell_1,r_1}(z) M_{\ell_2,r_2}(z) \Phi''\bigpar{y(z)} \notag \\
&\qquad + \Oz{(1-s)/2+\eta}. \label{mseven}
\end{align}
Let us write $\sumo$ for the sum in~\eqref{mseven}. Plugging in \eqref{lgdyy}, \eqref{lz0b}, and the induction hypothesis, we obtain
\begin{align}
M_{\ell,r}(z) = 2^{-3/2} \gs  \sumo \binom{\ell}{\ell_1} \binom{r}{r_1} \hkk_{\ell_1,r_1}\hkk_{\ell_2,r_2} (1-z)^{(1-s)/2} + \Oz{(1-s)/2+\eta}.
\end{align}
Thus we have completed the induction for~\eqref{Mz5} with
\begin{align}\label{kapparec}
\hkk_{\ell,r} = 2^{-3/2} \gs \sumo \binom{\ell}{\ell_1} \binom{r}{r_1} \hkk_{\ell_1,r_1}\hkk_{\ell_2,r_2}.
\end{align}

In order to verify the formula~\eqref{kappalr} for $\hkk_{\ell,r}$ given in the statement of the lemma, 
in light of~\eqref{m2as1x}, \eqref{m2as2}, and~\eqref{m2as3}
we need only show that
$\hkk_{\ell,r}$ as defined in \eqref{kappalr} satisfies the recursion \eqref{kapparec}. This is easy to achieve by means of generating functions, as follows.  Set
\begin{align}
K(x,y) &:= \sum_{\substack{s \ge 2 \\ s \text{ even}}} \sum_{\ell+r = s} \hkk_{\ell,r} \frac{x^{\ell}}{\ell!} \frac{y^r}{r!} \notag \\
&= \sum_{\substack{s \ge 2 \\ s \text{ even}}} \sum_{\ell+r = s} \frac{(s-3)!!}{\gs 2^{(s-1)/2}}
\sum_{\substack{j=0 \\ j \equiv \ell \bmod 2}}^{\ell \land r}
\binom{\ell}{j} \binom{r}{j} j!\, (\ell-j-1)!!\,(r-j-1)!! \notag \\
&\qquad \qquad \qquad \cdot\kk_{1,1}^{j} \kk_{2,0}^{(\ell-j)/2}\kk_{0,2}^{(r-j)/2} \frac{x^{\ell}}{\ell!} \frac{y^r}{r!}.
\end{align}
Setting $\ell-j = 2a$ and $r-j = 2b$, this can be rewritten 
as
\begin{align}
K(x,y) &= \sum_{\substack{s \ge 2 \\ s \text{ even}}} \frac{(s-3)!!}{\gs 2^{(s-1)/2}} \sum_{\substack{a,b,j \ge 0: \\ 
a+b+j = s/2}} \frac{\kk_{1,1}^{j} \kk_{2,0}^{a}\kk_{0,2}^{b} x^{j+2a} y^{j+2b}}{j!\,a!\,b!\,2^{a+b}} \notag \\
&= \sum_{\substack{s \ge 2 \\ s \text{ even}}} \frac{(s-3)!!}{\gs 2^{(s-1)/2}(s/2)!} \left( \frac{\kk_{2,0}\,x^2}{2} + \kk_{1,1}\,xy + \frac{\kk_{0,2}\,y^2}{2} \right)^{s/2} \notag \\
&= \frac{\sqrt{2}}{\gs} \sum_{t \ge 1} \frac{(2t-3)!!}{t!\, 2^{2t}} \left( \kk_{2,0}\,x^2 + 2\kk_{1,1}\,xy + \kk_{0,2}\,y^2 \right)^t \notag \\
&= \frac{\sqrt{2}}{\gs} - \frac{1}{\gs} \sqrt{2 - \left( \kk_{2,0}\,x^2 + 2\kk_{1,1}\,xy + \kk_{0,2}\,y^2 \right)}.
\end{align}
The recursion \eqref{kapparec} now follows by comparing coefficients of $x^{\ell}y^r$ in the identity
\begin{align}
K(x,y) = 2^{-3/2} \gs K(x,y)^2 + \frac{\kk_{2,0}\,x^2 + 2\kk_{1,1}\,xy + \kk_{0,2}\,y^2}{2^{3/2} \gs}.
\end{align}
This completes the proof of the lemma.
\end{proof}

So the functions $M_{\ell,r}(z)$ are amenable to singularity analysis, and we obtain the following theorem as an immediate application.

\begin{theorem}\label{TNEGmom}
Suppose that  $\rea_1 < 0$ and $\rea_2 < 0$. Then there exist constants $\kk_{2,0}$, $\kk_{1,1}$, and $\kk_{0,2}$ such that, for all non-negative integers $\ell$ and $r$,
\begin{align}\label{tnegmom}
&\frac{\E[\dF_{\ga_1}(\tn)^{\ell}\dF_{\ga_2}(\tn)^{r}]}{n^{(\ell+r)/2}} \notag \\
&\qquad \to  \sum_{\substack{j=0 \\ j \equiv \ell \bmod 2}}^{\ell \land r}
\binom{\ell}{j} \binom{r}{j} j!\, (\ell-j-1)!!\,(r-j-1)!!\, \kk_{1,1}^{j} \kk_{2,0}^{(\ell-j)/2}\kk_{0,2}^{(r-j)/2} 
\end{align}
as $n \to \infty$ if $\ell+r$ is even, and $\frac{\E[\dF_{\ga_1}(\tn)^{\ell}\dF_{\ga_2}(\tn)^{r}]}{n^{(\ell+r)/2}} \to 0$ otherwise.
\end{theorem}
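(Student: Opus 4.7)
The plan is to derive \refT{TNEGmom} as a routine consequence of \refL{mellsa}, via standard singularity analysis (\cite[Theorem VI.3]{FS}) together with the asymptotic \eqref{qn} for $q_n$. In other words, the real work is already contained in the lemma; what remains is coefficient extraction and a Gamma-function bookkeeping step.

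First, I would extract the $n$th coefficient of both sides of \eqref{Mz5}. Setting $s := \ell + r \ge 2$, the transfer theorem applied to
\begin{align*}
M_{\ell,r}(z) = \hkk_{\ell,r}(1-z)^{(1-s)/2} + \Oz{(1-s)/2+\eta}
\end{align*}
yields
\begin{align*}
q_n \E\bigsqpar{\dF_{\ga_1}(\tn)^{\ell}\dF_{\ga_2}(\tn)^{r}} = [z^n] M_{\ell,r}(z) = \frac{\hkk_{\ell,r}}{\gG\bigpar{(s-1)/2}}\, n^{(s-3)/2} + O\bigpar{n^{(s-3)/2 - \eta}}.
\end{align*}
Dividing by $q_n = \frac{1+O(n^{-\gd/2})}{\sqrt{2\pi}\,\gs}\,n^{-3/2}$ from \eqref{qn} gives
\begin{align*}
\E\bigsqpar{\dF_{\ga_1}(\tn)^{\ell}\dF_{\ga_2}(\tn)^{r}} = \frac{\sqrt{2\pi}\,\gs\,\hkk_{\ell,r}}{\gG\bigpar{(s-1)/2}}\, n^{s/2} + O\bigpar{n^{s/2 - \eta}}.
\end{align*}
The case $s = 1$ is not covered by this formula, but there \eqref{EXn5} already supplies $\E[\dF_{\ga_i}(\tn)]/n^{1/2} = O(n^{-\eta}) \to 0$, which matches the ``$\ell + r$ odd'' conclusion of the theorem.

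For $s \ge 3$ odd, \refL{mellsa} asserts $\hkk_{\ell,r} = 0$, so the leading term vanishes and $\E[\dF_{\ga_1}(\tn)^{\ell}\dF_{\ga_2}(\tn)^{r}]/n^{s/2} = O(n^{-\eta}) \to 0$, as claimed. For $s = 2k$ even, I would substitute the explicit formula \eqref{kappalr} for $\hkk_{\ell,r}$ and apply the classical Gamma-function identity
\begin{align*}
\gG\bigpar{(s-1)/2} = \gG\bigpar{k - \tfrac12} = \frac{(s-3)!!\,\sqrt{\pi}}{2^{(s-2)/2}}
\end{align*}
to collapse the prefactor. A short direct calculation then gives
\begin{align*}
\frac{\sqrt{2\pi}\,\gs}{\gG\bigpar{(s-1)/2}} \cdot \frac{(s-3)!!}{\gs\,2^{(s-1)/2}} = 1,
\end{align*}
so the leading coefficient reduces precisely to the double sum on the right-hand side of \eqref{tnegmom}.

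Since the bulk of the argument is already absorbed into \refL{mellsa}, the present step is essentially a two-line computation. The only genuine subtlety is the combinatorial simplification in the even case, and the Gamma identity above is exactly what causes the double factorials in~\eqref{tnegmom} to appear naturally; I expect no real obstacle beyond making sure the $s = 1$ boundary case is handled separately via \eqref{EXn5}.
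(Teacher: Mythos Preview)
Your proposal is correct and follows essentially the same approach as the paper: apply singularity analysis to the expansion \eqref{Mz5} from \refL{mellsa}, divide by $q_n$ via \eqref{qn}, then use the semifactorial/Gamma identity to cancel the prefactor, handling $s=1$ separately by \eqref{EXn5}. The paper's proof is line-for-line the same argument with the same Gamma simplification.
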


\begin{proof}
In view of Lemma~\ref{mellsa}, singularity analysis gives us
\begin{align}
[z^n] M_{\ell,r}(z) = \frac{\hkk_{\ell,r}}{\gG((s-1)/2)} n^{(s-3)/2} + \On{(s-3)/2-\eta}
\end{align}
for $s = \ell + r \ge 2$, so, using~\eqref{qn},
\begin{align}
\E[\dF_{\ga_1}(\tn)^{\ell}\dF_{\ga_2}(\tn)^{r}] = \frac{[z^n] M_{\ell,r}(z)}{q_n} = \frac{\sqrt{2\pi}\gs\hkk_{\ell,r}}{\gG((s-1)/2)} n^{s/2} + \On{s/2-\eta}.
\end{align}
Since $\gG((s-1)/2) =2^{1-(s/2)}\sqrt{\pi}(s-3)!!$ for even $s$ (recall~\eqref{semi}), the statement follows immediately from the formula for $\hkk_{\ell,r}$ in Lemma~\ref{mellsa} for all $s \ge 2$ and from~\eqref{EXn5} for $s = 1$.
\end{proof}

The following lemma will be used in the proof of \refT{TNEG} to establish that the limiting variance is positive.  Recall the notation \eqref{Xn} and $q_k = \P(|\cT| = k)$.

\begin{lemma}\label{Lnondeterm}
Consider any complex~$\ga$ with $\rea \neq 0$.  Then there exists $k$ such that $q_k > 0$ and $F_{\ga}(\cT_k)$ is not deterministic.
\end{lemma}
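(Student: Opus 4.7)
The plan is to construct, for any $\ga$ with $\rea\ne0$, two plane trees $T_1,T_2$ of a common size $k$ that each occur with positive probability under the (unconditioned) Galton--Watson measure and that satisfy $F_\ga(T_1)\ne F_\ga(T_2)$; this immediately gives $q_k>0$ and non-determinism of $F_\ga(\cT_k)$. Because $\E\xi=1$ and $\Var\xi>0$, $\xi$ cannot be concentrated at a single nonnegative integer; combined with $\xi\ge0$ and $\E\xi=1$, this forces $p_0>0$ together with $p_d>0$ for at least one $d\ge2$. In particular, there is always some plane tree $S'$ with $\P(\cT=S')>0$ and $|S'|=:s\ge2$ (for instance the two-vertex path if $p_1>0$, or the $(d+1)$-vertex ``star'' with root of out-degree $d$ and all children leaves if $p_1=0$).

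The core construction is a small skeleton tree $T_0$ of size $2d+1$ with two leaves at different depths: let the root have out-degree $d$ with children $v_1,\dots,v_d$; let $v_1$ again have out-degree $d$ with children $w_1,\dots,w_d$; and let $v_2,\dots,v_d,w_1,\dots,w_d$ all be leaves. Only the degrees $0$ and $d$ appear in $T_0$, so $\P(\cT=T_0)>0$. I would then form $T_1$ by grafting a copy of $S'$ onto the depth-$1$ leaf $v_2$ (i.e., replace $v_2$ by the root of $S'$), and $T_2$ by grafting the same copy of $S'$ instead onto the depth-$2$ leaf $w_1$. Both resulting trees have the common size $k:=2d+s$ and share the same multiset of vertex out-degrees, so $\P(\cT=T_1)=\P(\cT=T_2)>0$, and in particular $q_k>0$.

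The last step is a direct comparison of $F_\ga(T_1)$ and $F_\ga(T_2)$. Grafting $S'$ onto a leaf $u$ of $T_0$ affects subtree sizes only at (i) the strict ancestors of $u$ in $T_0$, each of which grows by $s-1$, (ii) the leaf $u$ itself, whose new subtree size is $s$, and (iii) the newly added non-root vertices of $S'$. The strict ancestors of $v_2$ are just the root, while those of $w_1$ are the root and $v_1$, so every contribution to $F_\ga$ agrees between $T_1$ and $T_2$ except the one from $v_1$, and one obtains
\begin{align*}
F_\ga(T_1)-F_\ga(T_2)=(d+1)^\ga-(d+s)^\ga.
\end{align*}
Since $s\ge2$ ensures $d+1\ne d+s$ and $\rea\ne0$ ensures $|(d+1)^\ga|=(d+1)^{\rea}\ne(d+s)^{\rea}=|(d+s)^\ga|$, this difference is nonzero, so $F_\ga(\cT_k)$ takes two distinct values on positive-probability atoms. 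The only place that needs any genuine care is the bookkeeping of which subtree sizes are disturbed by the grafting operation; everything else is purely combinatorial setup.
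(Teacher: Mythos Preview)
Your proof is correct and takes essentially the same approach as the paper. In fact, if you choose $S'$ to be the $(d{+}1)$-vertex star (root with $d$ leaves), your trees $T_1$ and $T_2$ coincide exactly with the paper's Tree~1 and Tree~2 (with $j=d$), and your difference $(d+1)^\ga-(d+s)^\ga$ specializes to the paper's $(j+1)^\ga-(2j+1)^\ga$; the only distinction is that you phrase the construction via a grafting operation and allow a general $S'$, which is a mild (unneeded) generalization of the same idea.
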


\begin{proof}
We know that $p_0 > 0$ and that $p_j > 0$ for some $j \geq 2$.  Fix such a value~$j$.  Let $k = 3 j + 1 \geq 7$.  Consider 
two realizations of the random tree $\cT_k$, each of which has positive probability.  Tree 1 has $j$ children of the root, and precisely two of those~$j$ children have $j$ children each; the other $j - 2$ have no children.  Tree 2 also has $j$ children of the root; precisely one of those $j$ children (call it child 1) has $j$ children, while the other $j - 1$ have no children; precisely one of the children of child 1 has $j$ children, while the others have no children.

Then the values of $F_{\ga}$ for Tree 1 and Tree 2 are, respectively,
\begin{align}
3 j - 2 + 2 (j + 1)^{\ga} + (3 j + 1)^{\ga}
\end{align}
and
\begin{align}
3 j  - 2 + (j + 1)^{\ga} + (2 j + 1)^{\ga} + (3 j + 1)^{\ga}.
\end{align}
These values can't be equal, because otherwise we would have $(j + 1)^{\ga} = (2 j + 1)^{\ga}$; but the two numbers here have unequal absolute values.  
\end{proof}

\begin{proof}[Proof of \refT{TNEG}]
The limit in \eqref{tnegmom} equals the mixed moment 
$\E \bigsqpar{\zeta_1^\ell\zeta_2^r}$, where $\zeta_1$ and $\zeta_2$ have a
joint complex normal distribution and 
$\E\zeta_1^2=\kk_{2,0}$,
$\E\zeta_1\zeta_2=\kk_{1,1}$,    
and $\E\zeta_2^2=\kk_{0,2}$; this follows by Wick's theorem
\cite[Theorem 1.28 or Theorem 1.36]{SJIII} by noting that
the factor
$\binom{\ell}{j} \binom{r}{j} j!\, (\ell-j-1)!!\,(r-j-1)!!$
in \eqref{tnegmom}
is the number of perfect matchings of $\ell$ (labelled) copies of $\zeta_1$ and
$r$ copies of $\zeta_2$ such that there are $j$ pairs $(\zeta_1,\zeta_2)$.

Hence, \refT{TNEG}, except for the assertion of positive variance addressed next, follows by the method of moments, taking $\ga_1:=\ga$ and $\ga_2:=\bga$, \cf{} \refR{Rmom}.

We already know from \refT{TNEGmom} that $\Var F_{\ga}(\tn) = a\,n + o(n)$ for some $a \geq 0$; we need only show that $a > 0$.  Fix $k$ as in \refL{Lnondeterm}.  Write $v_k > 0$ for the variance of $F_{\ga}(\cT_k)$.  Let $N_{n, k}$ denote the number of fringe subtrees of size $k$ in $\tn$.  It  follows from \cite[Theorem 1.5(i)]{SJ285}
that
\begin{align}
\E N_{n, k} \sim q_k n
\end{align}
as $n \to \infty$.  If for $\tn$ we condition on $N_{n, k} = m$ and all of $\tn$ except for fringe subtrees of size $k$, then the conditional variance of $F_{\ga}(\tn)$ is the variance of the sum of $m$ independent copies of $F_{\ga}(\cT_k)$, namely, $m v_k$.  Thus
\begin{align}
\Var F_{\ga}(\tn) \geq v_k \E N_{n, k} \geq (1 + o(1)) v_k q_k n,
\end{align}
so the constant $a$ mentioned at the start of this paragraph satisfies $a \geq v_k q_k > 0$.
\end{proof}

\begin{remark}
We recall that asymptotic normality of $X_n(\ga)$, 
or equivalently of $\dF_{\ga}(\tn)$, is already proven in \cite[Theorem~1.1]{SJ359}.
Furthermore, \cite[Section~5]{SJ359} shows joint asymptotic normality for several
$\ga$ with $\rea<0$, 
which for the case of two values $\ga_1$ and $\ga_2$ is consistent
with \eqref{tnegmom} (by the argument in the proof of \refT{TNEG} above).
It would certainly be possible to generalize the moment convergence results
in this section to convergence of mixed moments for  combinations of
several $\ga_i$, similarly to \refS{S:Ijoint},
including also the possibility $\Re\ga_i\ge0$ for some values of~$i$.
However, this would require a lengthy case distinction (depending on the
signs of the values $\Re \ga_i$), so we did not perform these calculations
explicitly.
Instead we just note that if we consider only the case
$\Re\ga_i<0$, then 
convergence of all mixed moments follows from
the joint convergence in \eqref{t1.1} for several $\ga_i$
shown in \cite[Section~5]{SJ359} 
together with the uniform integrability of 
$|n\qqw [X_n(\ga) - \mu(\ga)n]|^r$ for 
arbitrary $r>0$ that follows from \refT{TNEG} (see \refR{Rmom}).
\end{remark}

\section{Fractional moments (mainly of negative order) of tree-size:\\ comparisons across offspring distributions}
\label{S:comp}

Recall from \cite[Theorem~1.7]{SJ359} that the $\ga$th moment $\mu(\ga) = \E |\cT|^{\ga}$ of tree size defined at~\eqref{mu} is the slope in the lead-order linear approximation $\mu(\ga) n$ of $\E X_n(\ga)$ whenever $\Re \ga < \half$; and from \refT{TNEG} that this linear approximation suffices as a centering for $X_n(\ga)$ in order to obtain a normal limit distribution when $\Re(\ga) < 0$.  (See also \refR{Rcenters}.)  It is therefore of interest to compute 
$\mu(\ga)$ and, similarly, the constant $\mu' = \E \log |\cT|$ defined at~\eqref{xd}, which serves as the centering slope in \refT{TZ}. 

In \cite[Appendix~A]{SJ359} it is noted that although $\mu(\ga)$ can be evaluated numerically, no exact values for important examples of \GW\ trees are known in any simple form except in the case that $\ga$ is a negative integer.  This section is motivated by our having noticed that for all such values (for small~$k$) reported for four examples in that appendix, $\mu(-k)$ is smallest for binary trees \cite[Example~A.3]{SJ359}, second smallest for labelled trees \cite[Example~A.1]{SJ359}, second largest for full binary trees \cite[Example~A.4]{SJ359}, and largest for ordered trees \cite[Example~A.2]{SJ359}.  We wanted to understand why this ordering occurs and whether any such ordering could be predicted for the values $\mu'$ defined at~\eqref{xd}.

In \refS{S:comp_theo} we give a sufficient condition (\eqref{strict_Phi_order} in \refT{Tstrict_order}) for such (strict) orderings that is fairly easy to check.  In \refS{S:ex} we give a class of examples extending the four in \cite[Appendix~A]{SJ359} where this condition is met.  In \refS{S:num} we discuss numerical computation of $\mu'$, which we carry out for the four examples in \cite[Appendix~A]{SJ359} and some additional examples.

The results of this \refS{S:comp} do not require~\eqref{2+gd}.    

\subsection{Comparison theory}\label{S:comp_theo}

The main results of this section are in 
\refT{Tstrict_order}.
Working toward those results, we begin by recalling from \cite[(A.6)]{SJ359} (where~$y$ is called ``$g$'' and~\eqref{2+gd} is not required) that for $\Re \ga < \half$ we have
\begin{equation}\label{A.6}
\mu(\ga) = \frac{1}{\gG(1 - \ga)} \int_0^1\! (\log \tfrac1t)^{-\ga} y'(t) \dd t.
\end{equation}

To utilize \eqref{A.6} directly, even merely to obtain inequalities across models for real~$\ga$, one needs to compute the derivative of the tree-size \pgf\ $y$, or at least to compare the functions $y'$ for the compared models.  This is nontrivial, since explicit computation of~$y'$ (or~$y$) is difficult or even infeasible in examples such as $m$-ary trees and full $m$-ary trees when $m > 2$.  Fortunately, according to~\eqref{A.6recast} (and similarly~\eqref{mu'intrecast} in regard to $\mu'$) to follow, one need only treat the simpler offspring {\pgf}(s)~$\Phi$. 

Before proceeding to our main results, we present 
a simple lemma, a recasting of~\eqref{A.6}, and a definition.

\begin{lemma}\label{Lstrict}
The function $t \mapsto t / \Phi(t)$ 
is the inverse function of $y:\oi\to\oi$, and it
increases strictly from~$0$ to~$1$ for $t \in [0, 1]$.
\end{lemma}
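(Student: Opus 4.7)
The plan is to read off the inverse directly from the defining functional equation~\eqref{Phiy}, $y(z) = z\Phi(y(z))$. Since $\Phi$ has non-negative coefficients with $\Phi(0) = p_0$ and $\Phi(1) = 1$, and since the standing assumptions $\E\xi = 1$ and $0 < \Var\xi < \infty$ force $p_0 > 0$ (otherwise $\xi\ge 1$ a.s., which together with $\E\xi=1$ would force $\xi \equiv 1$ and hence $\Var\xi=0$), the map $g(t) := t/\Phi(t)$ is well-defined and continuous on $\oi$ with $g(0) = 0$ and $g(1) = 1$. Rearranging~\eqref{Phiy} gives $g(y(z)) = z$ for $z \in \oi$, so once I show that $g$ is a strictly increasing continuous bijection $\oi \to \oi$, it will automatically be the inverse of $y$.

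The one substantive step is therefore strict monotonicity of $g$. I would compute
\begin{equation*}
  g'(t) = \frac{\Phi(t) - t\Phi'(t)}{\Phi(t)^2} = \frac{h(t)}{\Phi(t)^2},
\end{equation*}
and reduce the claim to $h(t) > 0$ on $[0,1)$. Two elementary observations finish the job: first $h(1) = \Phi(1) - \Phi'(1) = 1 - 1 = 0$, and second $h'(t) = -t\Phi''(t)$, which is strictly negative on $(0,1]$ because $\Var\xi > 0$ together with $\E\xi = 1$ forces $p_k > 0$ for some $k \ge 2$ (otherwise $\xi \in \set{0,1}$ a.s., leading to $\E\xi < 1$ or $\Var\xi = 0$). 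Hence $h$ is strictly decreasing on $(0,1]$ and vanishes at~$1$, so $h > 0$ on $[0,1)$ and $g' > 0$ there, giving strict monotonicity of $g$ on $\oi$.

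I do not foresee any serious obstacle; the only care needed is extracting the two positivity facts $p_0 > 0$ and $\Phi'' > 0$ on $(0,1]$ from the standing hypotheses. With strict monotonicity of $g$ in hand, $g$ is a continuous bijection $\oi \to \oi$, and the identity $g \circ y = \mathrm{id}$ on $\oi$ then shows both that $y$ maps $\oi$ bijectively onto $\oi$ and that $g$ is its inverse, as claimed.
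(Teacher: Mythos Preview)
Your argument is correct. You establish strict monotonicity of $g(t)=t/\Phi(t)$ directly via the derivative computation $g'=(\Phi-t\Phi')/\Phi^2$ and the sign analysis of $h=\Phi-t\Phi'$ (using $h(1)=0$ and $h'(t)=-t\Phi''(t)<0$ on $(0,1]$), then invoke $g\circ y=\mathrm{id}$ from~\eqref{Phiy}. The paper instead goes the other way round: it observes that $y(z)=\sum_n q_n z^n$ is manifestly continuous and strictly increasing on $[0,1]$ with $y(0)=0$ and $y(1)=1$ (non-negative coefficients, not all zero), so $y$ has a strictly increasing inverse, and then reads off from~\eqref{Phiy} that this inverse is $t\mapsto t/\Phi(t)$. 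The paper's route is shorter because strict monotonicity of a non-trivial power series with non-negative coefficients is immediate, whereas you have to extract $p_0>0$ and $\Phi''>0$ from the standing hypotheses and run a small calculus argument; on the other hand, your route works purely in terms of the offspring \pgf~$\Phi$ without appealing to any properties of~$y$ beyond the functional equation, which is a minor conceptual gain.
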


\begin{proof}
It is obvious from \eqref{yz} that $y(z)$ is 
continuous and strictly increasing for $z\in\oi$ with $y(0)=0$ and $y(1)=1$.
Hence its inverse is also strictly increasing from 0 to 1 on $\oi$.
Finally, \eqref{Phiy} shows that the inverse is $t \mapsto t / \Phi(t)$.
\end{proof}

We 
will henceforth write
\begin{equation}\label{rdef}
R(\eta) := \frac{1}{y^{-1}(\eta)} = \frac{\Phi(\eta)}{\eta} \in [1, \infty), \quad \eta \in (0, 1];
\end{equation}
this strictly decreasing function~$R$ will appear on several occasions in the sequel, especially in \refApp{Aconverse}.

It follows from \eqref{A.6}, \refL{Lstrict}, a change of variables from~$t$ to $\eta = y(t)$, and~\eqref{Phiy} that
\begin{equation}\label{A.6recast}
\mu(\ga) = \frac{1}{\gG(1 - \ga)} \int_0^1 [\log R(\eta)]^{-\ga} \dd \eta.
\end{equation}
Further, differentiation with respect to~$\ga$ at $\ga = 0$ gives
\begin{equation}\label{mu'intrecast}
\mu' = - \gam - \int_0^1[\log \log R(\eta)] \dd \eta.
\end{equation}

For the remainder of \refS{S:comp} we focus on real~$\ga$ and utilize the following notation.

\begin{definition}\label{Dineq}
For two real-valued functions $g_1$ and $g_2$ defined on $(0, 1)$, write $g_1 \leq g_2$ to mean that $g_1(t) \leq g_2(t)$ for all $t \in (0, 1)$; write $g_1 < g_2$ to mean that $g_1 \leq g_2$ but $g_2 \not\leq g_1$ (equivalently, that $g_1(t) \leq g_2(t)$ for all $t \in (0, 1)$, with strict inequality for 
at least one value of~$t$); and write $g_1 \prec g_2$ to mean that $g_1(t) < g_2(t)$ for all $t \in (0, 1)$.
\end{definition}

Consider two \GW{} trees, $\cT^{(1)}$ 
and $\cT^{(2)}$, with respective offspring distributions $\xi_1$ and $\xi_2$.  Denote the trees' respective 
$\Phi$-functions by $\Phi_1$ and $\Phi_2$, and use similarly subscripted notation 
for other functions associated with the trees.

We note in passing that, as a simple consequence of~\refL{Lstrict} 
whose proof is left to the reader, 
\begin{equation}\label{Phiiffy}
\Phi_1 \leq \Phi_2 \quad \mbox{if and only if} \quad y_1 \leq y_2,
\end{equation}
and hence also
\begin{equation}
\Phi_1 < \Phi_2 \quad \mbox{if and only if} \quad y_1 < y_2.
\end{equation}
The result~\eqref{Phiiffy} is perhaps of some independent interest but is used in the sequel mainly in the proof of \refT{TLaplace}.

\begin{theorem}\label{Torder}
Consider two \GW{} trees, $\cT^{(1)}$ and $\cT^{(2)}$.  Suppose
\begin{equation}
\Phi_1 \leq \Phi_2. \label{Phi_order}
\end{equation}
  \begin{thmenumerate}
  \item \label{T_neg_a}
If\/ $\ga < 0$, then
\begin{equation}\label{neg_a}
\mu_1(\ga) \leq \mu_2(\ga).
\end{equation}
  \item \label{T_pos_a}
If\/ $0 < \ga < \frac12$, then
\begin{equation}\label{pos_a}
\mu_1(\ga) \geq \mu_2(\ga).
\end{equation}
\item \label{T_shape}
The centering constants for the corresponding shape functionals satisfy
\begin{align}\label{t_shape}
  \mu_1' \geq \mu_2'.
\end{align}
  \end{thmenumerate}
\end{theorem}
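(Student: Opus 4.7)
The plan is to derive all three conclusions directly from the integral representations \eqref{A.6recast} and \eqref{mu'intrecast} by a single monotonicity argument. The starting observation is that the hypothesis $\Phi_1 \le \Phi_2$ translates, upon division by $\eta$, to $R_1(\eta) \le R_2(\eta)$ for every $\eta \in (0,1]$ in the notation of \eqref{rdef}, and hence, since $R(\eta) \ge 1$, to $\log R_1(\eta) \le \log R_2(\eta)$ on $(0,1]$. All three parts of the theorem then follow by composing this pointwise inequality with an appropriate monotone function and integrating.

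For part \ref{T_neg_a}, $\ga < 0$ gives $-\ga > 0$, so $x \mapsto x^{-\ga}$ is non-decreasing on $[0,\infty)$; applied pointwise to $\log R_1 \le \log R_2$, and combined with the positive prefactor $1/\gG(1-\ga)$ in \eqref{A.6recast}, this yields \eqref{neg_a}. For part \ref{T_pos_a}, the assumption $0 < \ga < 1/2$ gives $-\ga < 0$, so $x \mapsto x^{-\ga}$ is non-increasing on $(0,\infty)$; the same pointwise comparison in \eqref{A.6recast} reverses to produce \eqref{pos_a}. For part \ref{T_shape}, $x \mapsto \log x$ is increasing on $(0,\infty)$, so $\log R_1 \le \log R_2$ implies $\log\log R_1 \le \log\log R_2$ on $(0,1)$ (where $R > 1$, because $\Phi(\eta) > \eta$ for $\eta \in (0,1)$ under $\E\xi = 1$ and $\gss > 0$); the minus sign in \eqref{mu'intrecast} then flips the inequality to give \eqref{t_shape}.

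There is no real obstacle: the only technical point is to confirm integrability of the integrands in \eqref{A.6recast} and \eqref{mu'intrecast} near the endpoints, which is uniform across offspring distributions. Near $\eta = 1$, the Taylor expansion $\Phi(\eta) = 1 - (1-\eta) + \frac{\gss}{2}(1-\eta)^2 + o((1-\eta)^2)$ (using $\Phi(1) = \Phi'(1) = 1$ and $\Phi''(1) = \gss$) yields $\log R(\eta) \sim \frac{\gss}{2}(1-\eta)^2$, so $[\log R(\eta)]^{-\ga} = O\bigpar{(1-\eta)^{-2\ga}}$ is integrable whenever $\ga < \tfrac12$, and likewise $\log\log R(\eta) = 2\log(1-\eta) + O(1)$ is integrable. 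Near $\eta = 0$ (where necessarily $p_0 > 0$ since $|\cT| < \infty$ a.s.), $R(\eta) = O(1/\eta)$, so $\log R(\eta) = O(\log(1/\eta))$ and $\log\log R(\eta) = O\bigpar{\log\log(1/\eta)}$, both integrable. Thus each of parts \ref{T_neg_a}--\ref{T_shape} reduces to a one-line monotonicity argument, with the three different conclusions arising solely from the sign of $-\ga$ (parts \ref{T_neg_a} and \ref{T_pos_a}) and from the extra minus sign multiplying the integral in \eqref{mu'intrecast} (part \ref{T_shape}).
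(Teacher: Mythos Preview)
Your proof is correct and follows exactly the approach of the paper, which states simply that the result ``is immediate from \eqref{A.6recast} and \eqref{mu'intrecast}.'' You have merely spelled out the monotonicity reasoning behind that one-line appeal; the integrability checks you add are fine but unnecessary, since the integral representations \eqref{A.6recast} and \eqref{mu'intrecast} are already established as equalities in the paper (so finiteness of the integrals is known).
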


\begin{proof}
This is immediate from~\eqref{A.6recast} and \eqref{mu'intrecast}.
\end{proof}

Note 
that, by considering difference quotients, each of~\ref{T_neg_a} and~\ref{T_pos_a} in \refT{Torder} implies~\ref{T_shape} there; one doesn't need the stronger hypothesis~\eqref{Phi_order} for this conclusion.

\begin{remark}
The conclusions
in \refT{Torder} do not always extend from $\mu(\ga)$ to 
$\E X_n(\ga)$ for finite~$n$.  A counterexample with $n = 3$ is provided by
taking $\xi_1 \sim 2 \Bi(1, \tfrac12)$ (corresponding to uniform full binary
trees, with $X_3(\ga)$ concentrated at $2 + 3^{\ga}$) and $\xi_2 \sim\Ge(\half)$ 
(corresponding to ordered trees, with $\E X_3(\ga) = \frac32 +
\half 2^{\ga} + 3^{\ga}$).  As shown in \refL{LPhiordt}, we have $\Phi_1
\leq \Phi_2$, but \refT{Torder}\ref{T_neg_a}--\ref{T_pos_a} 
with $\E X_3(\ga)$ in place of of $\mu(\ga)$ fails for
\emph{every} value of~$\ga$, as does \eqref{t_shape}.
\end{remark}

The converse to \refT{Torder} fails.  That is, \refT{Torder}\ref{T_neg_a}--\ref{T_pos_a} do not imply that~\eqref{Phi_order} does, too.  A counterexample is provided in \refApp{Aconverse}.
However, as the next theorem shows, \eqref{Phi_order} has for $\ga < 0$ a stronger consequence than~\refT{Torder}\ref{T_neg_a}, and this stronger consequence yields a converse result:

\begin{theorem}\label{TLaplace}
We have
\begin{equation}\label{iff_ineq}
\mbox{\rm $\E (|\cT_1| - 1 + t)^{\ga} \leq \E (|\cT_2| - 1 + t)^{\ga}$ for all integers $\ga < 0$ and all $t \in (0, \infty)$}
\end{equation} 
if and only if~\eqref{Phi_order} holds, in which case the inequality in~\eqref{iff_ineq} also holds for all real $\ga < 0$ and all $t \in (0, \infty)$. 
\end{theorem}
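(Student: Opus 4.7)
The plan is to reduce both directions to a single integral representation, then leverage Bernstein's theorem for the converse. The starting point is the standard identity $x^{\ga}=\gG(-\ga)^{-1}\int_0^{\infty}s^{-\ga-1}e^{-sx}\dd s$, valid for $x>0$ and any $\ga$ with $\Re\ga<0$. Applying this with $x=|\cT|-1+t\ge t>0$ (so Fubini is immediate, using the bound $\E e^{-s|\cT|}=y(e^{-s})\le e^{-s}$ to control the integrand), one obtains
\begin{equation}\label{planeq1}
  \E(|\cT|-1+t)^{\ga}=\frac{1}{\gG(-\ga)}\int_0^{\infty} s^{-\ga-1} e^{-s(t-1)} y(e^{-s}) \dd s
\end{equation}
for every real $\ga<0$ and every $t>0$. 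Convergence at $s=0$ follows from $-\ga-1>-1$, and convergence at $s=\infty$ from $y(e^{-s})\le e^{-s}$ together with $t>0$.

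For the ``if'' direction, assume $\Phi_1\le\Phi_2$. By \eqref{Phiiffy} this is equivalent to $y_1\le y_2$ on $\oi$, so $y_1(e^{-s})\le y_2(e^{-s})$ for all $s>0$. Since the kernel $s^{-\ga-1}e^{-s(t-1)}/\gG(-\ga)$ in \eqref{planeq1} is strictly positive for real $\ga<0$ and $t>0$, integrating the pointwise inequality $y_1(e^{-s})\le y_2(e^{-s})$ against that kernel yields the claimed bound on expectations. This simultaneously establishes sufficiency for \emph{all} real $\ga<0$, delivering the final sentence of the theorem at no extra cost.

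For the ``only if'' direction, suppose \eqref{iff_ineq} holds at every negative integer $\ga=-k$ and every $t>0$. Set $D(s):=y_2(e^{-s})-y_1(e^{-s})$, which is continuous on $[0,\infty)$ and satisfies $|D(s)|\le 2e^{-s}$, and let $h(u):=\int_0^{\infty}e^{-us}D(s)\dd s$, which is real-analytic for $u>-1$. Differentiation under the integral sign gives $(-1)^{k-1}h^{(k-1)}(u)=\int_0^{\infty}s^{k-1}e^{-us}D(s)\dd s$, so specializing \eqref{planeq1} to $\ga=-k$ and writing $u=t-1$ converts the hypothesis into $(-1)^{k-1}h^{(k-1)}(u)\ge 0$ for every $k\ge 1$ and every $u>-1$. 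Hence $h$ is completely monotone on $(0,\infty)$. By Bernstein's theorem there is a nonnegative Borel measure $\mu$ on $[0,\infty)$ with $h(u)=\int e^{-us}\dd\mu(s)$ for $u>0$; uniqueness of the Laplace transform (applied, e.g., to the finite signed measure $e^{-s}D(s)\dd s$) forces $D(s)\dd s=\dd\mu(s)$, so $D\ge 0$ almost everywhere, and continuity of $D$ upgrades this to $D\ge 0$ on all of $[0,\infty)$. Thus $y_1\le y_2$ on $(0,1]$, and \eqref{Phiiffy} yields $\Phi_1\le\Phi_2$.

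The main obstacle is the converse: merely knowing that the Laplace transform $h$ of $D$ is nonnegative on $(-1,\infty)$ does \emph{not} force $D\ge 0$, so a single $\ga$ (even $\ga=-1$) would be insufficient. What makes the proof work is that the full family of negative-integer exponents encodes complete monotonicity of $h$ at every order, precisely the hypothesis needed to invoke Bernstein and conclude pointwise nonnegativity of $D$ via Laplace-transform uniqueness.
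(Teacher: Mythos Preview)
Your proof is correct. The ``only if'' direction is essentially the paper's argument: both reduce the hypothesis to complete monotonicity of a Laplace transform of $y_2(e^{-s})-y_1(e^{-s})$ (the paper works with $e^s$ times this and variable~$t$ rather than $u=t-1$, but the content is identical), then invoke Bernstein--Widder and uniqueness.

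Where your argument genuinely improves on the paper is the ``if'' direction together with the extension to real $\ga<0$. The paper establishes the integer case via the Bernstein--Widder equivalence, and then for real $\ga<0$ splits on~$t$: for $t\in(0,1]$ it expands $(|\cT|-1+t)^{\ga}$ as a binomial series in $1-t$ and appeals to \refT{Torder}\ref{T_neg_a} term by term; for $t>1$ it invokes a separate incomplete-gamma representation developed in \refApp{Amono} (\refT{Tt>1}). Your single Gamma-integral identity \eqref{planeq1} handles every real $\ga<0$ and every $t>0$ at once by integrating the pointwise inequality $y_1(e^{-s})\le y_2(e^{-s})$ against the positive kernel $s^{-\ga-1}e^{-s(t-1)}/\gG(-\ga)$, thereby bypassing the case split and the need for \refApp{Amono} entirely. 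This is a cleaner route and makes the structure of the theorem more transparent.
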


\begin{proof}
Setting 
$\ga = -k$ in~\eqref{A.6recast} and summing over positive integers~$k$, for complex~$z$ in the open unit disk define the function~$H$ as at \cite[(A.7)]{SJ359}:
\begin{equation}
H(z) := \E \Bigpar{1 - \frac{z}{|\cT|}}^{-1} 
= \sum_{k = 0}^{\infty} \mu(-k) z^k
= \int_0^1\!\exp\left[ z \log R(\eta) \right] \dd \eta.
\end{equation}
Changing variables (back) from~$\eta$ to 
$t = y\qw(\eta) = 1 / R(\eta)$, we then find
\begin{equation}
H(z) = \int_0^1\!t^{-z} y'(t) \dd t = 1 + z \int_0^1\!t^{- z - 1} y(t) \dd t,
\end{equation}
with the last equality, resulting from integration by parts, as noted at \cite[(A.9)]{SJ359}; thus
\begin{equation}\label{Et-z-1}
\E (|\cT| - z)^{-1} = z^{-1} (H(z) - 1) = \int_0^1\!t^{- z - 1} y(t) \dd t.
\end{equation}
Since both the first and third expressions in~\eqref{Et-z-1} are analytic for all~$z$ with $\Re z < 1$, they are equal in this halfplane.  Changing variables, we then find, for $\Re z > -1$, that
\begin{equation}\label{Et-z-1b}
\E (|\cT| + z)^{-1} = \int_0^{\infty}\!e^{- z x} y(e^{-x}) \dd x.
\end{equation}
In particular, if~\eqref{Phi_order} holds, then (recalling~\eqref{Phiiffy})
\begin{align}
\E (|\cT_1| - 1 + t)^{-1} 
&= \int_0^{\infty}\!e^{- t x} e^x y_1(e^{-x}) \dd x \notag\\ 
&\leq \int_0^{\infty}\!e^{- t x} e^x y_2(e^{-x}) \dd x
= \E (|\cT_2| - 1 + t)^{-1}
\end{align}
for real $t > 0$.

But more is true.  Let $\gD(z) := e^z [y_2(e^{-z}) - y_1(e^{-z})]$.  Then for $t > 0$ we have that
\begin{equation}\label{hrep}
h(t) := \E (|\cT_2| - 1 + t)^{-1} - \E (|\cT_1| - 1 + t)^{-1} = \int_0^{\infty}\!e^{- t x} \gD(x) \dd x 
\end{equation}
is the Laplace transform of the bounded continuous function~$\gD$ on $(0, \infty)$.  It 
follows from the Bernstein--Widder theorem (\eg, \cite[Theorem XIII.4.1a]{FellerII}) that~$h$ satisfies the (weak) complete monotonicity inequalities~\eqref{iff_ineq}, \ie,
\begin{equation}\label{complete_weak}
\mbox{\rm $(-1)^r h^{(r)}(t) \geq 0$ for all integers $r \geq 0$ and all $t \in (0, \infty)$},
\end{equation}
if and only if $\gD(x) \geq 0$ for a.e.\ $x > 0$, which in turn is true if and only if $y_1 \leq y_2$, or (by~\eqref{Phiiffy}) equivalently~\eqref{Phi_order}, holds.

Next, if~\eqref{Phi_order} 
holds, then for real $\ga < 0$ and $t \in (0, 1]$ we have
\begin{align}
\E (|\cT_1| - 1 + t)^{\ga} 
&= \sum_{k = 0}^{\infty} \binom{|\ga| + k - 1}{k}\,\mu_1(\ga - k)\,(1 - t)^k 
\notag\\
&\leq \sum_{k = 0}^{\infty} \binom{|\ga| + k - 1}{k}\,\mu_2(\ga - k)\,(1 - t)^k 
\notag\\
&= \E (|\cT_2| - 1 + t)^{\ga}, 
\end{align}
where the inequality holds by \refT{Torder}\ref{T_neg_a}.

Finally, if~\eqref{Phi_order} holds, then for real $\ga < 0$ and $t > 1$, \refT{Tt>1} in \refApp{Amono} implies that for 
$j \in \{1, 2\}$ we have
\begin{equation}\label{t>1}
\E (|\cT_j| - 1 + t)^{\ga} = (t - 1)^{\ga} \int_0^1\!\left[ 1 - \frac{c_j(\eta)}{\Gamma(-\ga)} \right] \dd \eta, 
\end{equation}
where $c_j$ is the incomplete gamma function value
\begin{equation}
c_j(\eta) = \int_{(t - 1) \log R_j(\eta)}^{\infty} w^{ - \ga - 1} e^{-w} \dd w,
\end{equation}
and from~\eqref{t>1} it is evident that $\E (|\cT_1| - 1 + t)^{\ga} \leq \E (|\cT_2| - 1 + t)^{\ga}$. 
\end{proof}

\begin{remark}\label{Rsuff}
This remark concerns sufficient conditions for \eqref{Phi_order} (equivalently, by~\eqref{Phiiffy}, for $y_1 \leq y_2$). 

(a)~The condition
\begin{equation}\label{stoch_order}
\mbox{$|\cT^{(1)}| \geq |\cT^{(2)}|$ stochastically}
\end{equation}
is stronger than $y_1 \leq y_2$ and is of course equivalent to the condition that
\begin{equation}
\mbox{$\E g(|\cT^{(1)}|) \geq \E g(|\cT^{(2)}|)$}
\end{equation}
for every nonnegative nondecreasing function $g$ defined on the
positive integers.  In particular, \eqref{stoch_order} implies the
conclusions of \refT{Torder} and~\eqref{iff_ineq} 
in \refT{TLaplace}.

Note, however, that~\eqref{stoch_order} is \emph{strictly} stronger than $y_1 \leq y_2$.  While the stronger condition~\eqref{stoch_order} holds
for some of the comparisons in~\refS{S:ex} (for example, binary trees vs.\
labelled trees, for which there is monotone likelihood ratio (MLR); 
and full binary trees vs.\ ordered trees, for which there is no MLR but still
stochastic ordering),  
an example satisfying~\eqref{Phi_order} (see \refL{LPhiordt} for a proof) 
but not~\eqref{stoch_order} is $\xi_1 \sim \Po(1)$ (labelled trees) and 
$\xi_2 \sim 2 \Bi(1, \tfrac12)$ (full binary trees), because $\P(\xi_1 \leq 1) = \frac2e > \frac12 = \P(\xi_2 \leq 1)$.

(b)~Similarly, the condition
\begin{equation}\label{xi_stoch_order}
\mbox{$\xi_1 \geq \xi_2$ stochastically}
\end{equation}
is stronger than~\eqref{Phi_order}; indeed, it's even stronger than~\eqref{stoch_order}.  But this stochastic ordering of offspring distributions can only hold if~$\xi_1$ and~$\xi_2$ have the same distribution, because $\E \xi_1 = \E \xi_2 = 1$.
\end{remark}

\begin{remark}\label{Rnec}
This remark concerns necessary conditions for~\eqref{Phi_order}.


(a)
If \eqref{Phi_order} holds, then by  
a Taylor expansion near $t = 1$ \cite[(A.6)]{SJ167}
(or, alternatively, recalling~\eqref{Phiiffy}, 
by $y_1 \leq y_2$ and \cite[(A.5)]{SJ167}),
$\gs_1^2 \leq \gs_2^2$.  
(This does not require the assumption~\eqref{2+gd};
when \eqref{2+gd} holds, we can also use
\cite[Lemma 12.14]{SJ359} or~\eqref{lgdy}.)

(b)~More generally, and by similar reasoning, if~\eqref{Phi_order} holds and for some integer $r \geq 2$ we have
$\E \xi_1^j = \E \xi_2^j \leq \infty$ for $j = 1, \ldots, r - 1$, then $(-1)^r \E \xi_1^r \leq (-1)^r \E \xi_2^r \leq \infty$.
See \refApp{Ainfinite} for details.

(c) We can also consider a Taylor expansion near $t = 0$.  Thus if~\eqref{Phi_order} holds, then 
$\P(\xi_1 = 0) \leq \P(\xi_2 = 0)$.  More generally, if for some integer $r \geq 0$ we have $\P(\xi_1 = j) = \P(\xi_2 = j)$ for $j = 0, \ldots, r - 1$, then $\P(\xi_1 = r) \leq \P(\xi_2 = r)$. 
\end{remark}

We next address the question of a stronger condition than~\eqref{Phi_order}
under which the inequalities in \eqref{neg_a}--\eqref{t_shape} 
and~\eqref{iff_ineq} are all strict.  Recall the meaning of $g_1 < g_2$ described in \refD{Dineq}.

\begin{theorem}\label{Tstrict_order}
Consider two \GW{} trees, $\cT^{(1)}$ and $\cT^{(2)}$.  Suppose
\begin{equation} \label{strict_Phi_order}
\Phi_1 < \Phi_2.
\end{equation}
  \begin{thmenumerate}
  \item \label{strict_T_neg_a}
If\/ $\ga < 0$, then
\begin{equation}\label{strict_neg_a}
\mu_1(\ga) < \mu_2(\ga).
\end{equation}
  \item \label{strict_T_pos_a}
If\/ $0 < \ga < \frac12$, then
\begin{equation}\label{strict_pos_a}
\mu_1(\ga) > \mu_2(\ga).
\end{equation}
  \item \label{strict_T_shape}
We have
\begin{equation}
  \label{strict_shape}
  \mu_1' > \mu_2'.
\end{equation}
  \end{thmenumerate}
\end{theorem}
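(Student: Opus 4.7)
The plan is to upgrade the proof of \refT{Torder} from non-strict to strict inequalities by combining the integral representations \eqref{A.6recast} and \eqref{mu'intrecast} with a simple real-analyticity observation showing that $R_1(\eta)<R_2(\eta)$ holds on a set of full Lebesgue measure in $(0,1)$, where $R_j(\eta)=\Phi_j(\eta)/\eta$ as in \eqref{rdef}.

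First I would observe that $h:=\Phi_2-\Phi_1$ extends to a holomorphic function on the open unit disk (each $\Phi_j$ being a power series with radius of convergence at least $1$), so its restriction to $(0,1)$ is real-analytic. By hypothesis \eqref{strict_Phi_order}, $h\ge0$ everywhere on $(0,1)$ and $h(\eta_0)>0$ for some $\eta_0$; in particular $h$ is not identically zero, so by the identity theorem its zero set in $(0,1)$ is discrete, and hence Lebesgue-null. Dividing by $\eta$, this gives $R_1(\eta)<R_2(\eta)$ for almost every $\eta\in(0,1)$; recall also from \refL{Lstrict} that $R_j(\eta)>1$ strictly.

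Next I would apply a monotonicity of the integrand in $R$ for each part. For \ref{strict_T_neg_a}, the map $r\mapsto(\log r)^{-\ga}$ is strictly increasing on $(1,\infty)$ when $\ga<0$, and the prefactor $1/\gG(1-\ga)$ in \eqref{A.6recast} is positive for $\ga<1$; combining with the a.e.\ strict inequality $R_1<R_2$ yields \eqref{strict_neg_a}. For \ref{strict_T_pos_a}, the same map is strictly decreasing on $(1,\infty)$ when $0<\ga<\tfrac12$, giving \eqref{strict_pos_a}. For \ref{strict_T_shape}, the map $r\mapsto\log\log r$ is strictly increasing on $(1,\infty)$, so $\log\log R_1(\eta)<\log\log R_2(\eta)$ a.e., and then \eqref{mu'intrecast} yields \eqref{strict_shape}.

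I expect no serious obstacle. The only minor check is integrability of $\log\log R$ near the endpoints of $(0,1)$: near $\eta=1$ the square-root behavior of $y$ (valid whenever $\gss<\infty$, without needing \eqref{2+gd}) gives $\log R(\eta)\sim\tfrac12\gss(1-\eta)^2$, so $\log\log R(\eta)\sim 2\log(1-\eta)$; and near $\eta=0$ one has $R(\eta)\sim p_0/\eta$ because $p_0>0$ (forced by $\E\xi=1$ and $\gss>0$), so $\log\log R(\eta)\sim\log\log(1/\eta)$. Both singularities are logarithmic and hence integrable. The only real content is the one-line analyticity argument that promotes pointwise non-strict inequality to strict inequality almost everywhere.
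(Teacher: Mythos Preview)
Your proposal is correct and follows essentially the same route as the paper: strict inequality of the integrand on a set of positive measure, fed into the integral representations \eqref{A.6recast} and \eqref{mu'intrecast}. The paper's version is slightly lighter---it invokes only continuity of $\Phi_1,\Phi_2$ to conclude that $\Phi_1(t)<\Phi_2(t)$ on an interval of positive length, which already suffices; your analyticity argument gives the stronger (but unneeded) conclusion that strict inequality holds almost everywhere. Your endpoint integrability check is also superfluous, since \eqref{mu'intrecast} is already established in the paper (and $\mu'=\E\log|\cT|$ is finite because $q_n=O(n^{-3/2})$).
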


\begin{proof}
If~\eqref{strict_Phi_order} holds, then (by continuity of $\Phi_1$ and
$\Phi_2$) strict inequality $\Phi_1(t) < \Phi_2(t)$ holds over some interval
of positive length.  
The inequalities~\eqref{strict_neg_a}--\eqref{strict_shape}
are then immediate from \eqref{A.6recast}--\eqref{mu'intrecast}.
\end{proof}

\begin{theorem}\label{Tstrict_Laplace}
We have
\begin{equation}\label{strict_iff_ineq}
\mbox{\rm $\E (|\cT_1| - 1 + t)^{-m} < \E (|\cT_2| - 1 + t)^{-m}$ for all integers $m \geq 0$ and all $t \in (0, \infty)$}
\end{equation} 
if and only if~\eqref{strict_Phi_order} holds. 
\end{theorem}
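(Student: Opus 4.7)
The plan is to adapt the proof of \refT{TLaplace} with strictness carefully tracked throughout; since the case $m=0$ is degenerate (both expectations equal $1$), the substantive content of \eqref{strict_iff_ineq} is the strict inequality for $m\ge 1$ and $t>0$.

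For the ``if'' direction, assume $\Phi_1<\Phi_2$. I would first upgrade \eqref{Phiiffy} to the strict version $y_1<y_2$: the inequality $y_1\le y_2$ follows from \eqref{Phiiffy}, and if we had $y_1\equiv y_2$ on $\oi$, then \eqref{Phiy} together with the fact that each $y_j$ maps $\oi$ onto $\oi$ would force $\Phi_1\equiv\Phi_2$ on $\oi$, contradicting the hypothesis. By continuity, $y_2-y_1>0$ on some nonempty open subinterval $(a,b)\subset(0,1)$, so the function $\gD(x):=e^x[y_2(e^{-x})-y_1(e^{-x})]$ appearing in \eqref{hrep} is nonnegative on $(0,\infty)$ and strictly positive on the nonempty open interval $(-\log b,-\log a)$. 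Differentiating \eqref{hrep} under the integral sign (justified by the boundedness of $\gD$ and standard dominated-convergence estimates) then gives, for every integer $m\ge 1$ and every $t>0$,
\begin{equation*}
(-1)^{m-1}h^{(m-1)}(t)=\int_0^{\infty}\!x^{m-1}e^{-tx}\gD(x)\dd x>0,
\end{equation*}
because the integrand is nonnegative everywhere and strictly positive on a set of positive measure. Combining with the identity
\begin{equation*}
\E(|\cT_j|-1+t)^{-m}=\frac{(-1)^{m-1}}{(m-1)!}\frac{d^{m-1}}{dt^{m-1}}\E(|\cT_j|-1+t)^{-1},
\end{equation*}
justified by dominated convergence since $|\cT_j|-1+t\ge t>0$, yields the required strict inequality.

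For the ``only if'' direction, suppose the strict inequalities in \eqref{strict_iff_ineq} hold for all $m\ge 1$ and $t>0$. Then the corresponding weak inequalities hold a fortiori, so \refT{TLaplace} gives $\Phi_1\le\Phi_2$. If equality $\Phi_1\equiv\Phi_2$ held on $\oi$, then $\xi_1\eqd\xi_2$ (the coefficients of a convergent power series are determined by its values on a subinterval of its disk of convergence), whence $\cT^{(1)}\eqd\cT^{(2)}$, contradicting strict inequality for any single pair $(m,t)$. Hence $\Phi_1<\Phi_2$.

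I anticipate no substantial obstacle: conceptually, the argument just refines the completely-monotone Bernstein--Widder step in \refT{TLaplace} to a strict-positivity version using that a Laplace transform of a nonnegative function which is positive on a set of positive measure is strictly positive (together with its alternating derivatives). The only technical care needed is in justifying differentiation under the integral and the expectation, both of which follow from the bound $|\cT_j|-1+t\ge t$ and the fact that $\gD$ is bounded on $(0,\infty)$ with $\gD(0^+)=0$ (since $y_j(z)\to 1$ as $z\to 1^-$).
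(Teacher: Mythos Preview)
Your proof is correct and follows essentially the same route as the paper: for the ``if'' direction you use the Laplace representation \eqref{hrep}, differentiate under the integral, and exploit that $\gD\ge0$ with $\gD>0$ on an interval to obtain strict positivity of $(-1)^{m-1}h^{(m-1)}(t)$; for the ``only if'' direction you reduce to \refT{TLaplace} and rule out $\Phi_1\equiv\Phi_2$ by noting it would force equality. The paper's proof is extremely terse (two lines), but yours unpacks exactly the same argument with the needed justifications (differentiation under the integral, boundedness of~$\gD$, the strict upgrade of \eqref{Phiiffy}). Your observation that the case $m=0$ is degenerate (both sides equal~$1$) is well taken and points to a slip in the statement; the intended range is $m\ge1$, matching the negative-integer exponents in \refT{TLaplace}.
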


\begin{proof}
The forward direction \eqref{strict_iff_ineq}$\implies$\eqref{strict_Phi_order}
follows from \refT{TLaplace}.

For the opposite direction,
use the representation~\eqref{hrep} and take derivatives with respect to
$t$.
%
\end{proof}

\begin{remark}
For all the comparison examples in \refS{S:ex} where the condition \eqref{strict_Phi_order} holds, we in fact have the stronger condition that $\Phi_1 \prec \Phi_2$.
When~\eqref{strict_Phi_order} holds, we can't have $\Phi_1 = \Phi_2$ over a nondegenerate interval because 
$\Phi_2(z) - \Phi_1(z)$ is analytic for~$z$ in the open unit disk.  But it \emph{is} possible to have $\Phi_1(t) = \Phi_2(t)$ for some values of $t \in (0, 1)$.  For an example with one such value, namely, $t = 1/6$, use the notation of \refApp{Aframework} and take $\Phi_1 = \Phi$ and $\Phi_2 = \tPhi_0$.  
\end{remark}

\subsection{Comparison examples}\label{S:ex} 

In this subsection we consider the following important examples of critical
Galton--Watson trees, and we fix the subscripting notation in
\eqref{unim}--\eqref{fullm} for the remainder of~\refS{S:comp}:
\begin{align}
\mbox{$m$-ary trees:\ }&\xi_{1, m} \sim \Bi(m, \tfrac1m)\mbox{\ ($m \geq 2$)}; \label{unim}\\
\mbox{labelled trees:\ }&\xi_2 \sim \Po(1); \label{lab}\\
\mbox{full binary trees:\ }&\xi_3 \sim 2 \Bi(1, \half); \label{fullbin}\\
\mbox{ordered trees:\ }&\xi_4 \sim \Ge(\half); \label{ord}\\
\mbox{full $m$-ary trees:\ }&\xi_{5, m} \sim m \Bi(1, \tfrac1m)\mbox{ ($m \geq 3$)}. \label{fullm}
\end{align}
Observe that
\begin{equation}\label{gs1m}
\gs_{1, m}^2 = 1 - \tfrac1m \uparrow\mbox{\ strictly as $m \uparrow$},
\end{equation}
that
\begin{equation}\label{gs5m}
\gs_{5, m}^2 = m - 1\uparrow\mbox{\ strictly as $m \uparrow$},
\end{equation}
and that, for any $m \geq 2$, we have
\begin{equation}\label{gsord}
\gs_{1, m}^2 < \gs_2^2 = \gs_3^2 < \gs_4^2 = \gs_{5, 3}^2.
\end{equation}
Further,
\begin{equation}\label{thirdord23}
\E \xi_2^3 = 5 > 4 = \E \xi_3^3
\end{equation}
and
\begin{equation}\label{thirdord45}
\E \xi_4^3 = 13 > 9 = \E \xi_{5, 3}^3.
\end{equation}
According to \refR{Rnec}(a)--(b) and \eqref{gs1m}--\eqref{thirdord45}, the only possible $\Phi$-orderings in the order $<$ among the trees listed in \eqref{unim}--\eqref{fullm} are
\begin{equation}\label{Phi1m}
\Phi_{1, m} \uparrow\mbox{\ strictly as $m \uparrow$},
\end{equation}
\begin{equation}\label{Phi5m}
\Phi_{5, m} \uparrow\mbox{\ strictly as $m \uparrow$},
\end{equation}
and, for any $m \geq 2$,
\begin{equation}\label{Phiord}
\Phi_{1, m} < \Phi_2 < \Phi_3 < \Phi_4 < \Phi_{5, 3}.
\end{equation}

Alternatively, we can note that
\begin{equation}
\P(\xi_{1, m} = 0) = (1 - \tfrac1m)^m \uparrow\mbox{\ strictly as $m \uparrow$}
\end{equation}
(see~\eqref{logs} below with $t = 0$); that
\begin{equation}
\P(\xi_{5, m} = 0) = 1 - \tfrac1m \uparrow\mbox{\ strictly as $m \uparrow$};
\end{equation}
that, for any $m \geq 2$, we have
\begin{equation}
\P(\xi_{1, m} = 0) < e^{-1} = \P(\xi_2 = 0) < \P(\xi_3 = 0) = \P(\xi_4 = 0) < \P(\xi_{5, 3} = 0);
\end{equation}
and, further, that
\begin{equation}
\P(\xi_3 \leq 1) = \half < \tfrac34 = \P(\xi_4 \leq 1)
\end{equation}
to conclude again, now using \refR{Rnec}(c), that the only possible $\Phi$-orderings in the order $<$ for \eqref{unim}--\eqref{fullm} are \eqref{Phi1m}--\eqref{Phiord}.
 
Remarkably, all the inequalities in \eqref{Phi1m}--\eqref{Phiord} are true, and in fact there is strict inequality at \emph{every} argument.
\begin{lemma}\label{LPhiordt}
For every $t \in (0, 1)$ we have
\begin{equation}\label{Phi1mt}
\Phi_{1, m}(t) \uparrow\mbox{\rm \ strictly as $m \uparrow$},
\end{equation}
\begin{equation}\label{Phi5mt}
\Phi_{5, m}(t) \uparrow\mbox{\rm \ strictly as $m \uparrow$};
\end{equation}
and, for any $m \geq 2$,
\begin{equation}\label{Phiordt}
\Phi_{1, m} \prec \Phi_2 \prec \Phi_3 \prec \Phi_4 \prec \Phi_{5, 3}.
\end{equation}
\end{lemma}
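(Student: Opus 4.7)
The plan is to write down explicit formulas for the five offspring {\pgf}s,
$\Phi_{1,m}(t) = \bigpar{1 - (1-t)/m}^m$, $\Phi_2(t) = e^{t-1}$, $\Phi_3(t) = (1+t^2)/2$, $\Phi_4(t) = 1/(2-t)$, $\Phi_{5,m}(t) = 1 - (1-t^m)/m$, and then prove each part by direct calculation.

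For~\eqref{Phi1mt}, set $u := 1-t \in (0,1)$ and take logarithms. Since $u/m < 1$, the Taylor expansion $\log(1-x) = -\sum_{k\ge1} x^k/k$ gives
\begin{equation*}
\log \Phi_{1,m}(t) \;=\; m\log(1 - u/m) \;=\; -u - \sum_{k \ge 2} \frac{u^k}{k\,m^{k-1}}.
\end{equation*}
Each term $u^k/(k m^{k-1})$ with $k \ge 2$ is strictly positive and strictly decreasing in $m$, so the sum strictly decreases and hence $\log\Phi_{1,m}(t)$ strictly increases in $m$. For~\eqref{Phi5mt}, write $(1-t^m)/m = (1-t)\,\bar a_m$, where $\bar a_m := (1+t+\dots+t^{m-1})/m$ is the arithmetic mean of $1,t,\dots,t^{m-1}$. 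Since $0<t<1$, the new term $t^m$ appearing in $\bar a_{m+1}$ is strictly smaller than every previous entry, hence strictly smaller than $\bar a_m$, so $\bar a_{m+1} < \bar a_m$. Multiplying by the positive factor $1-t$ and subtracting from $1$ yields~\eqref{Phi5mt}.

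For the chain~\eqref{Phiordt}, the first link $\Phi_{1,m} \prec \Phi_2$ follows from~\eqref{Phi1mt} together with the classical limit $\bigpar{1 + (t-1)/m}^m \to e^{t-1}$ as $m\to\infty$: strict monotonicity in $m$ with this limit forces $\Phi_{1,m}(t) < \Phi_2(t)$ for every finite $m\ge2$. For $\Phi_2 \prec \Phi_3$, set $f(t) := (1+t^2)/2 - e^{t-1}$; then $f(1) = f'(1) = 0$ and $f''(t) = 1 - e^{t-1} > 0$ on $(-\infty,1)$, which makes $f$ strictly convex on $(-\infty,1]$ with critical point $f'(1)=0$; this forces $f' < 0$ on $(0,1)$, so $f$ is strictly decreasing there, and $f(1)=0$ then gives $f > 0$ on $(0,1)$. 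The remaining two links are pure algebra: clearing denominators exposes the factorizations
\begin{equation*}
\Phi_4(t) - \Phi_3(t) \;=\; \frac{t(1-t)^2}{2(2-t)}, \qquad \Phi_{5,3}(t) - \Phi_4(t) \;=\; \frac{(1-t)^3(1+t)}{3(2-t)},
\end{equation*}
both manifestly strictly positive on $(0,1)$.

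The only step requiring transcendental manipulation is the comparison $\Phi_2 \prec \Phi_3$, and even there the argument reduces to checking the signs of two successive derivatives and exploiting that $f'(1)=f''(1)=0$ at the endpoint; everything else reduces to a positive-term series (for~(i)), a one-line averaging argument (for~(ii)), or a rational function whose numerator displays an obvious $(1-t)^k$ factor after polynomial division.
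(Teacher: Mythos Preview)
Your proof is correct and follows essentially the same strategy as the paper's: both treat the lemma as a collection of elementary calculus exercises, proving each comparison by explicit expansion or factorization. The tactical choices differ slightly in a few places---for \eqref{Phi5mt} you use an arithmetic-mean argument where the paper differentiates $\Phi_{5,m+1}-\Phi_{5,m}$; for $\Phi_2\prec\Phi_3$ you analyze the difference via $f''>0$ and $f'(1)=0$ while the paper takes the log-difference and shows its derivative is $-(1-t)^2/(1+t^2)$; and for $\Phi_4\prec\Phi_{5,3}$ you factor the difference while the paper factors the ratio---but these are equivalent one-line variants of the same direct verification.
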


\begin{proof}
The proof is a collection of simple exercises in calculus.
\medskip

\emph{Proof of~\eqref{Phi1mt}}.  Fix $m \geq 2$ and $t \in (0, 1)$.  Observe that
\begin{equation}\label{Phi1mform}
\Phi_{1, m}(t) = (\tfrac{m - 1}{m} + \tfrac{1}{m} t)^m = [1 - \tfrac1m (1 - t)]^m. 
\end{equation}
Thus
\begin{align}
\lefteqn{\log \Phi_{1, m + 1}(t) - \log \Phi_{1, m}(t)} 
\notag\\ 
&= (m + 1) \log\Bigpar{1 - \frac{1 - t}{m + 1}} - m \log\Bigpar{1 - \frac{1 - t}{m}} \notag\\
&= - \left[ (1 - t) + \frac{(1 - t)^2}{2 (m + 1)} + \frac{(1 - t)^3}{3 (m + 1)^2} + \cdots \right]
+ \left[ (1 - t) + \frac{(1 - t)^2}{2 m} + \frac{(1 - t)^3}{3 m^2} + \cdots \right] \notag\\
&> 0. \label{logs}
\end{align}
\smallskip

\emph{Proof of~\eqref{Phi5mt}}.  Fix $m \geq 3$.  Consider $t \in (0, 1]$ and observe that
\begin{equation}\label{Phi5mform}
\Phi_{5, m}(t) = \tfrac1m (m - 1 + t^m) 
\end{equation}
Let $f(t) := \Phi_{5, m + 1}(t) - \Phi_{5, m}(t)$.  We have $f(1) = 1 - 1 = 0$ and
\begin{align}
f'(t) = t^m - t^{m - 1} = - t^{m - 1} (1 - t) < 0
\end{align}
for $t \in (0, 1)$.  Thus $f(t) > 0$ for $t \in (0, 1)$.
\medskip

\emph{Proof of $\Phi_{1, m} \prec \Phi_2$ for $2 \leq m < \infty$}. From~\eqref{Phi1mform} we see that
\begin{align}
\Phi_{1, \infty}(t) := \lim_{m \to \infty} \Phi_{1, m}(t) = e^{t - 1} = \Phi_2(t).
\end{align}
The result follows.
\medskip

\emph{Proof of $\Phi_2 \prec \Phi_3$}. Consider $t \in (0, 1]$ and let 
\begin{align}
f(t) := \ln \Phi_3(t) - \ln \Phi_2(t) = \ln(1 + t^2) - \ln 2 - (t - 1).
\end{align}
We have $f(1) = 0$ and
\begin{align}
f'(t) = 2 t (1 + t^2)^{-1} - 1 = - (1 - t)^2 (1 + t^2)^{-1} < 0
\end{align}
for $t \in (0, 1)$.  Thus $f(t) > 0$ for $t \in (0, 1)$.
\medskip

\emph{Proof of $\Phi_3 \prec \Phi_4$}.  Consider 
$t \in [0, 1]$ and let
\begin{align}
f(t) := \Phi_4(t) - \Phi_3(t) = \half (1 - \half t)^{-1} - \half (1 + t^2) = \tfrac14 t (1 - t)^2 (1- \half t)^{-1}.
\end{align}
Clearly, $f(t) > 0$ for $t \in (0, 1)$. 
\medskip

\emph{Proof of $\Phi_4 \prec \Phi_{5, 3}$}.  Consider $t \in (0, 1)$ and let
\begin{align}
f(t) 
:= \frac{\Phi_{5, 3}(t)}{\Phi_4(t)} = \frac{\tfrac23 + \tfrac13 t^3}{\half (1 - \half t)^{-1}}.
\end{align}
Then
\begin{align}
f(t) = 1 + \tfrac13 (1 - 2 t + 2 t^3 - t^4) = 1 + \tfrac13 (1 - t)^3 (1 + t) > 1,
\end{align} 
as desired.
\end{proof}

\begin{theorem}
  \begin{thmenumerate}
  \item \label{muord_neg_a}
If $\ga < 0$, then
\begin{equation}\label{mu1m_neg}
\mu_{1, m}(\ga) \uparrow\mbox{\rm \ strictly as $m \uparrow$},
\end{equation}
\begin{equation}\label{mu5mt_neg}
\mu_{5, m}(\ga) \uparrow\mbox{\rm \ strictly as $m \uparrow$};
\end{equation}
and, for any $m \geq 2$,
\begin{equation}\label{muord_neg}
\mu_{1, m}(\ga) < \mu_2(\ga) < \mu_3(\ga) < \mu_4(\ga) < \mu_{5, 3}(\ga).
\end{equation}
  \item \label{muord_pos_a}
The orders in~\ref{muord_neg_a} are all reversed for $0 < \ga < \half$ and for $\mu'$. 
  \end{thmenumerate}
\end{theorem}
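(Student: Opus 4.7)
The plan is to read this theorem off as an immediate corollary of \refL{LPhiordt} combined with \refT{Tstrict_order}. The pointwise strict inequalities $\Phi_1 \prec \Phi_2$ established in \refL{LPhiordt} are, by \refD{Dineq}, stronger than the ordering $\Phi_1 < \Phi_2$ required in the hypothesis of \refT{Tstrict_order}; so each of the $\Phi$-orderings in \eqref{Phi1mt}--\eqref{Phiordt} feeds directly into that theorem without further work.

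For part~\ref{muord_neg_a}, I would apply \refT{Tstrict_order}\ref{strict_T_neg_a} in succession: the comparison $\Phi_{1,m} \prec \Phi_{1,m+1}$ from \eqref{Phi1mt} gives the monotonicity in \eqref{mu1m_neg}; likewise \eqref{Phi5mt} gives \eqref{mu5mt_neg}; and the four comparisons in the chain \eqref{Phiordt} give the four strict inequalities in \eqref{muord_neg}. For part~\ref{muord_pos_a}, I would feed exactly the same $\Phi$-orderings into \refT{Tstrict_order}\ref{strict_T_pos_a} when $0 < \ga < \tfrac12$ and into \refT{Tstrict_order}\ref{strict_T_shape} for $\mu'$; each of these reverses the inequality associated with a given $\Phi$-comparison, so the entire chain flips.

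There is no real obstacle here, since all substantive work has already been done in \refL{LPhiordt} and \refT{Tstrict_order}. The only conceptual point worth flagging is the automatic sign flip between the regimes $\ga < 0$ and $0 < \ga < \tfrac12$. This traces through the integral representation \eqref{A.6recast}: since $R(\eta) > 1$ on $(0,1)$ (by \refL{Lstrict}), we have $\log R(\eta) > 0$, and the integrand $[\log R(\eta)]^{-\ga}$ is monotone increasing in $R$ when $\ga < 0$ but monotone decreasing when $\ga > 0$; an analogous monotonicity argument applied to \eqref{mu'intrecast} explains why the $\mu'$-ordering points in the same direction as the $0<\ga<\tfrac12$ case.
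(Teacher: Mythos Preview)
Your proposal is correct and follows exactly the approach of the paper, which states in one sentence that the theorem is immediate from \refL{LPhiordt} and \refT{Tstrict_order}. Your elaboration of how each piece of \refL{LPhiordt} feeds into the corresponding part of \refT{Tstrict_order}, and your remark on why $\prec$ suffices for the hypothesis $<$, are accurate and merely spell out what the paper leaves implicit.
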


\begin{proof}
The theorem is immediate from \refL{LPhiordt} and 
\refT{Tstrict_order}.
\end{proof}

\begin{remark}\label{Rxileq2}
The only two examples among \eqref{unim}--\eqref{fullm} for which $\xi \leq 2$ \as\ are binary trees with 
$\Phi_{1, 2}(t) = \frac14(1 + t)^2$ and full binary trees for which $\Phi_3(t) = \half (1 + t^2)$.  These are two examples ($c = \half$ and $c = 1$, respectively) of the most general critical \GW\ offspring distribution $\xi\cx$ to satisfy $\xi\cx \leq 2$ \as, with $0 < c \leq 1$ and
\begin{align}
\P(\xi\cx = 0) = \P(\xi\cx = 2) = \half c, \qquad \P(\xi\cx = 1) = 1 - c.
\end{align} 
Generalizing $\Phi_{1, 2} \prec \Phi_3$ from~\eqref{Phiordt} in \refL{LPhiordt}, we claim that $\Phi\cx$ is strictly increasing in the order~$\prec$.  Indeed, for $t \in (0, 1)$ we have
\begin{equation}\label{Phic}
\Phi\cx(t) = t + \half c (1 - t)^2,
\end{equation}
which is clearly strictly increasing in $c \in (0, 1]$. 
\end{remark}

\begin{remark}
Despite a suggestion to the contrary provided by \refL{LPhiordt} and \refR{Rxileq2}, the partial order $\leq$ on tree-size {\pgf}s is \emph{not} a linear order.  An example of incomparable $\Phi$ and $\tPhi$ is provided in \refApp{Aframework} (taking $\eps \in (0, 1]$ in the notation there).
For a simpler counterexample, which shows that
$\leq$ does not even linearly order cubic \pgf{s},  
let
\begin{align}
\Phi(t) := \Phi_{1, 2}(t) = \tfrac14 (1 + t)^2
\end{align}
correspond to binary trees, as at~\eqref{unim}; and let
\begin{align}
\tPhi(t) := \tfrac{6}{32} + \tfrac{23}{32} t + \tfrac{3}{32} t^3.
\end{align}
If $t_0 = \half$, then
\begin{align}
\Phi(t_0) = \tfrac{9}{16} = \tfrac{144}{256} > \tfrac{143}{256} = \tPhi(t_0);
\end{align} 
while if $t_1 = \frac78$, then
\begin{align}
\Phi(t_1) = \tfrac{225}{256} = \tfrac{14400}{16384} < \tfrac{14405}{16384} = \tPhi(t_1).
\end{align}
\end{remark}

\subsection{Numerical computation of~$\mu'$}\label{S:num}

In this subsection we will compute the constant $\mu'$ for several examples of critical \GW\ trees.  First, to set the stage for what to expect, we consider in the next remark the possible values of $\mu'$ as $\xi$ ranges over all critical offspring distributions.

Recall~\eqref{mu'intrecast}.  For the next remark, we find it convenient to
break the integral into two pieces,
using the notation $x^+:=\max\set{x,0}$ and $x^-:=\max\set{-x,0}$:
\begin{align}\label{mu'split}
\mu' 
&= - \gam - \int_{t \in (0, 1)} [\log \log R(t)]^+ \dd t 
+ \int_{t \in (0, 1)} [\log \log R(t)]^- \dd t \nonumber\\
&= - \gamma - J_+ + J_-, 
\end{align}
say.

\begin{remark}\label{Rxileq2y}
In this remark we argue that there is no finite upper bound, nor positive lower bound, on $\mu'$ over all \GW\ trees.

(a)~Referring to \refR{Rxileq2}, observe that
\begin{equation}
\Phi\cx(t) \downto t
\end{equation}
for each $t \in (0, 1)$ as $c \downto 0$.  By the dominated convergence theorem (DCT), $J_+ \downto 0$ as $c \downto 0$.  By the monotone convergence theorem (MCT), $J_- \upto \infty$ as $c \downto 0$.  Thus, as $c \downto 0$ we have  
\begin{equation}
\mu'\cx \upto \infty.
\end{equation}
Indeed, it can be shown that $\mu'_{(c)} = \log \frac2c + 1 - \gamma + o(1)$ as $c \to 0$.

(b)~For
the offspring distributions $\xi_{5,m}$, we have as \mtoo\ that
\begin{align}
  \P(\xi_{5,m}=0)=1-\tfrac{1}m\to1,
\end{align}
and thus $\xi_{5, m} \pto 0$, which implies convergence of the \pgf{s} for every
$t\in(0,1)$; hence,
\begin{equation}
\Phi_{5, \infty}(t) := \lim_{m \to \infty} \Phi_{5, m}(t) = 1,
\end{equation}
which is otherwise obvious by direct calculation (showing also that the limit is an increasing one).
By the MCT applied to $J_+$ and the DCT applied to $J_-$, we find
\begin{equation}
\mu'_{5, m} \downto - \gamma - \int_0^1\!(\log \log \tfrac1t) \dd t = 0
\end{equation}
as $m \upto \infty$.  Indeed, it can be shown that $\mu'_{5, m} \sim m^{-1} \ln m$ as $m \to \infty$.

(c) We claim that the image of $\mu'$ over \GW\ tree models is in fact $(0, \infty)$.  To see this, we first note 
that $\mu'\cx$ is continuous in~$c$, 
with $\mu'\cxx1=\mu'_3$,
which by (a)
implies that the image contains $[\mu'_3, \infty)$.  Further, by considering the offspring {\pgf}s
\begin{equation}
\Phi_{\gl, m} := \gl \Phi_{5, m} + (1 - \gl) \Phi_3
\end{equation}
with $\gl \in [0, 1]$, one can show (by consideration of large~$m$) that the image also contains $(0, \mu'_3)$; we omit the details.

(d) Similarly as for~(c), for each fixed value of $\ga < 0$ the image of $\mu(\ga)$ over all \GW\ tree models is 
$(0, 1)$, and for each fixed value of $\ga \in (0, \half)$ the image is $(1, \infty)$. 
\end{remark}

\begin{example}\label{Eexplicitmu'}
The constant $\mu'_{1, 2}$ is computed to 50 digits in \cite[Section~5.2]{FillFK} using the alternative form
\begin{equation}
\mu' = - \gamma - \int_0^1\!(\log \log \tfrac1t) y'(t) \dd t
\end{equation}
of~\eqref{mu'intrecast}, explicit calculation of
\begin{equation}
y_{1, 2}(t) = \frac{2 - t - 2 \sqrt{1 - t}}{t} = t (1 + \sqrt{1 - t})^{-2}
\end{equation}
and thence its derivative
\begin{equation}
y_{1, 2}'(t) =  (1 - t)^{-1/2} (1 + \sqrt{1 - t})^{-2},
\end{equation}
and numerical integration.  But it is easier to use~\eqref{mu'intrecast} for (high-precision) computation of $\mu'$, especially for the values $\mu'_{1, m}$ and $\mu'_{5, m}$.

As examples, we find, rounded to five digits,
\begin{equation}
\mu'_{1, 2} = 2.0254, \qquad \mu'_2 = 1.5561, \qquad \mu'_3 = 1.4414, \qquad \mu'_4 = 1.1581.
\end{equation}
Note that
\begin{align}
\infty > \mu'_{1, 2} > \mu'_2 > \mu'_3 > \mu'_4 > 0,
\end{align}
as guaranteed by \refL{LPhiordt} and
\refT{Tstrict_order}\ref{strict_T_shape}; 
see also \refR{Rxileq2y} concerning the \emph{a priori} lack of an upper bound on $\mu'_{1, 2}$ and a positive lower bound on $\mu'_4$.

As other examples, we find, rounded to five digits, 
\begin{equation}
\mu'_{1, 2} = 2.0254, \qquad \mu'_{1, 3} = 1.8224, \qquad \mu'_{1,10^3} = 1.5567;
\end{equation}
\begin{equation}
\mu'_{5, 3} = 1.0164, \qquad \mu'_{5, 4} = 0.80800, \qquad \mu'_{5, 10^6} = 1.5372 \times 10^{-5};
\end{equation}
and, in the notation of \refR{Rxileq2},
\begin{equation}
\mu'_{(10^{-6})} = 14.931, \qquad \mu'_{(1/2)} = \mu'_{1, 2} = 2.0254, \qquad \mu'_{(1 - 10^{-2})} = 1.4496;
\end{equation}
\end{example}

\appendix

\ccreset
\section{Comparison counterexamples}\label{ACC}

\subsection{A framework for comparison counterexamples}\label{Aframework}

In this \refApp{Aframework} we establish a framework for various counterexamples involving comparisons of offspring distributions in \refS{S:comp}.  The idea is to set up two offspring distributions, say~$\xi$ and~$\txi$, with respective 
\pgf{s}~$\Phi$ and~$\tPhi$, such that, for (real) $t \in [0, 1)$, the difference $\gD(t) := \tPhi(t) - \Phi(t)$ satisfies $\gD(t) > 0$ for most values of~$t$, and $\gD(t) \leq 0$ (but not by much) for~$t$ very near $\frac16$ (with this value somewhat arbitrarily chosen).

Let $\xi$ have the following probability mass function 
satisfying $\E \xi = 1$, as required for a critical offspring distribution:
\begin{align}
p_0 &:= \P(\xi = 0) = \tfrac14 + 3 e^{-3} + 5 e^{-11} > 0, \\
p_1 &:= \P(\xi = 1) = \half > 0, \\
p_2 &:= \P(\xi = 2) = \tfrac14 - 4 e^{-3} + 36 e^{-11} > 0, \\
p_k &:= \P(\xi = k) = e^{-11} \frac{8^k}{k!} > 0\mbox{\ for $k \geq 3$}.
\end{align}
We denote its \pgf\ by~$\Phi$.

Let~$\eps \geq 0$ and for $t \in [0, 1]$ 
define
\begin{equation}
g_{\eps}(t) := 
\half \left[ 1 - \cos\bigpar{(4 \pi)(\tfrac35 t + \tfrac25)} \right] - \eps (1 - t)^3. 
\end{equation}
Note that
\begin{equation}\label{geps0}
g_{\eps}(1) = g_{\eps}'(1) = 0; 
\end{equation}
moreover, for every $t \in [0, 1]$ we have 
\begin{equation}\label{gbounds}
- \eps \leq g_{\eps}(t) \leq 1
\end{equation}
(in particular, $g_0(t) \geq 0$), 
and one can verify for 
small $\eps > 0$ that the set $\{t \in [0, 1): g_\eps(t) < 0\}$ is an open interval of length $O(\eps^{1/2})$ 
containing~$\frac16$.

Because $12 \pi / 5 < 8$, it's easy to check that there exists 
$\cc > 0$ such that for all 
$\eps \in [0, 1]$ the function
\begin{equation}\label{tPhieps}
\tPhi_{\eps}(t) := \Phi(t) + \ccx g_{\eps}(t)
\end{equation}
has a power series expansion about the origin with nonnegative coefficients.  From \eqref{geps0} it now follows that $\tPhi_{\eps}$ is the \pgf\ of a random variable~$\txi$ with $\E \txi = 1$.

As 
we have now discussed, the difference function $\gD_{\eps}(t) := \tPhi_{\eps}(t) - \Phi(t)$ is nonnegative when 
$\eps = 0$; and when $\eps > 0$ is small, the set 
\begin{equation}\label{interval}
I_{\eps} := 
\{t \in [0, 1): \gD_\eps(t) < 0\}=
\{t \in [0, 1): g_\eps(t) < 0\}
\end{equation}
is an open interval of length $O(\eps^{1/2})$ containing $\frac16$.

Although  
not needed anywhere in \refS{S:comp} nor in this \refApp{ACC}, we note in passing that both $\xi$ and $\txi_{\eps}$ have \mgf{s} that are finite everywhere and \pgf{s} that are entire; in particular, both satisfy~\eqref{2+gd}. 

\subsection{The converse to \refT{Torder} fails}\label{Aconverse}
In this subsection we show that~\ref{T_neg_a} and \ref{T_pos_a} of \refT{Torder} together do not imply~\eqref{Phi_order}.  In fact, not even the strict inequalities in~\ref{strict_T_neg_a}--\ref{strict_T_shape} of \refT{Tstrict_order} do.

\begin{example}
In the notation of \refApp{Aframework}, take $\Phi_1$ to be~$\Phi$ and
$\Phi_2$ to be the \pgf~$\tPhi_{\eps}$ of~\eqref{tPhieps}.  We do not have $\Phi_1 \leq \Phi_2$.  But we claim that for all sufficiently small $\eps > 0$ (not depending on~$\ga$, to be clear), 
\refT{Tstrict_order}\ref{strict_T_neg_a}--\ref{strict_T_shape} hold.

To establish the desired inequalities about $\mu(\ga)$, we will utilize~\eqref{A.6recast}.  
For this we apply the mean value theorem to the function $x \mapsto (\log x)^{-\ga}$, $x \in (1, \infty)$, as follows.  Let $1 < x_1 \leq x_2 < \infty$.  If $\ga \leq -1$, then for some point $x \in [x_1, x_2]$ we have
\begin{samepage}
\begin{align}\label{hw1}
\lefteqn{\hspace{-.1in}(\log x_2)^{-\ga} - (\log x_1)^{-\ga}} \nonumber \\
&= (- \ga) x^{-1} (\log x)^{ - \ga - 1} (x_2 - x_1) \nonumber \\ 
&\in [(- \ga) x_2^{-1} (\log x_1)^{ - \ga - 1} (x_2 - x_1),\ (- \ga) x_1^{-1} (\log x_2)^{ - \ga - 1} (x_2 - x_1)].
\end{align}
\end{samepage}%
Similarly, if $\ga \in (-1, 0)$, then
\begin{align}\label{hw2}
\lefteqn{\hspace{-.1in}(\log x_2)^{-\ga} - (\log x_1)^{-\ga}} \nonumber \\ 
&\in [(- \ga) x_2^{-1} (\log x_2)^{ - \ga - 1} (x_2 - x_1),\ (- \ga) x_1^{-1} (\log x_1)^{ - \ga - 1} (x_2 - x_1)]; 
\end{align}
and if $\ga > 0$, then
\begin{align}\label{hw3}
\lefteqn{\hspace{-.2in}(\log x_1)^{-\ga} - (\log x_2)^{-\ga}} \nonumber \\
&\in [\ga x_2^{-1} (\log x_2)^{ - \ga - 1} (x_2 - x_1),\ \ga x_1^{-1} (\log x_1)^{ - \ga - 1} (x_2 - x_1)].
\end{align}

For $t \in (0, 1) \setminus I_{\eps}$ we have 
$\tPhi_\eps(t) \ge \Phi(t)$ and 
thus
$\tR_\eps(t) \ge R(t)$; hence
it follows from \eqref{hw1}--\eqref{hw3} that 
\begin{align}
[\log \tR_{\eps}(t)]^{-\ga} - [\log R(t)]^{-\ga}
&\geq |\ga| \frac{1}{\tR_{\eps}(t)} [\log R(t)]^{ - \ga - 1} \frac{\ccx g_{\eps}(t)}{t} 
&&\mbox{if $\ga \leq -1$}; 
\label{mostsmall} \\ 
[\log \tR_{\eps}(t)]^{-\ga} - [\log R(t)]^{-\ga}
&\geq |\ga| \frac{1}{\tR_{\eps}(t)} [ \log \tR_{\eps}(t)]^{ - \ga - 1} \frac{\ccx g_{\eps}(t)}{t} 
&&\mbox{if $\ga \in (-1, 0)$}; 
\label{mostmed} \\
[\log R(t)]^{-\ga} - [\log \tR_{\eps}(t)]^{-\ga}
&\geq \ga \frac{1}{\tR_{\eps}(t)} [\log \tR_{\eps}(t)]^{ - \ga - 1} \frac{\ccx g_{\eps}(t)}{t} 
&&\mbox{if $\ga > 0$}.
\label{mostbig} 
\end{align}
Denote the interval~$I_{\eps}$ defined at~\eqref{interval} by $(a_{\eps}, b_{\eps})$.
Consider $t \in I_{\eps}$ for the next three displays;
thus $\Phi(t) > \tPhi_\eps(t)$ and $R(t) > \tR_\eps(t)$.
If $\ga \leq -1$ we have, recalling \refL{Lstrict},
\begin{align}
[\log \tR_{\eps}(t)]^{-\ga} - [\log R(t)]^{-\ga}
&\geq - |\ga| \frac{1}{\tR_\eps(t)} [\log R(t)]^{ - \ga - 1} \frac{\ccx |g_{\eps}(t)|}{t} 
\nonumber \\
&= - |\ga| \frac{1}{\tPhi_\eps(t)} [\log R(t)]^{ - \ga - 1} \ccx |g_{\eps}(t)| 
\nonumber \\
&\geq - |\ga| \frac{1}{\tPhi_\eps(a_{\eps})} [\log R(a_{\eps})]^{ - \ga - 1} \ccx \eps,
\label{intervalsmall}
\end{align}
where we have used~\eqref{gbounds} at the last inequality; similarly, if $\ga \in (-1, 0)$ we have 
\begin{align} 
[\log \tR_{\eps}(t)]^{-\ga} - [\log R(t)]^{-\ga}
&\geq - |\ga| \frac{1}{\tPhi_\eps(t)} [\log \tR_{\eps}(t)]^{ - \ga - 1} \ccx |g_{\eps}(t)| \nonumber \\
&\geq - |\ga| \frac{1}{\tPhi_\eps(a_{\eps})} [\log \tR_{\eps}(b_{\eps})]^{ - \ga - 1} \ccx \eps;
\label{intervalmed}
\end{align}
and if $\ga > 0$ we have
\begin{align}
[\log R(t)]^{-\ga} - [\log \tR_{\eps}(t)]^{-\ga}
&\geq - \ga \frac{1}{\tPhi_\eps(t)} [\log \tR_{\eps}(t)]^{ - \ga - 1} \ccx |g_{\eps}(t)| \nonumber \\
&\geq - \ga \frac{1}{\tPhi_\eps(a_{\eps})} [\log \tR_{\eps}(b_{\eps})]^{ - \ga - 1} \ccx \eps.
\label{intervalbig}
\end{align}

We continue by assessing the contribution
to the difference $\sgn(\ga) \cdot [\mu(\ga) - \tmu_{\eps}(\ga)]$ of integrals from $t \in I_{\eps}$, with asserted inequalities valid for all small $\eps > 0$. 
If $\ga \leq -1$, the contribution is, using~\eqref{intervalsmall}, bounded
below by
\begin{align}
- |\ga| \ccx \eps \frac{1}{\tPhi_\eps(a_{\eps})} [\log R(a_{\eps})]^{ - \ga - 1} 
(b_{\eps} - a_{\eps}) 
&\geq - |\ga| \CC \eps^{3/2} \bigsqpar{\log \bigpar{R(1/6) + \CC \eps^{1/2}}}^{ - \ga - 1} \nonumber \\
&\geq - |\ga| C_1 \eps^{3/2} [\log R(1/7)]^{ - \ga - 1},
\label{negsmall}
\end{align}
where we have used \refL{Lstrict} and where the constants $C_1$  and $C_2$ do not depend on~$\ga$.
Similarly, for $\ga \in (-1, \half)$ the contribution is bounded below by
\begin{equation}\label{negnotsmall}
- |\ga| \CC \eps^{3/2} [\log R(1/5)]^{ - \ga - 1}. 
\end{equation}

Next we similarly assess the contribution
to $\sgn(\ga) \cdot [\mu(\ga) - \tmu_{\eps}(\ga)]$ from values $t \in (0, 1) \setminus I_{\eps}$.  For all small $\eps > 0$, for $\ga \leq -1$ the contribution is, using~\eqref{mostsmall}, at least
\begin{equation}
\int_{1/9}^{1/8}\!|\ga| \frac{1}{\tPhi_{\eps}(t)} [\log R(t)]^{ - \ga - 1} \ccx g_{\eps}(t) \dd t
\geq |\ga|\,\cc\,[\log R(1/8)]^{ - \ga - 1},
\label{possmall}
\end{equation}
where the constant $\ccx$ does not depend on~$\ga$.
Similarly, for $\ga \in (-1, 1/2)$ the contribution is, 
using~\eqref{mostmed}--\eqref{mostbig}, at least
\begin{equation}
|\ga| \cc \int_{7/24}^{1/3}\![\log \tR_{\eps}(t)]^{ - \ga - 1} \dd t
\geq |\ga| \cc\, [\log R(1/4)]^{ - \ga - 1}.
\label{posnotsmall}
\end{equation}

Summarizing, for $\ga \leq -1$ we have, using~\eqref{possmall} and~\eqref{negsmall},
\begin{equation}
\tmu_{\eps}(\ga) - \mu(\ga) 
\geq |\ga| c_2 [\log R(1/8)]^{ - \ga - 1}
- |\ga| C_1 \eps^{3/2} [\log R(1/7)]^{ - \ga - 1}; 
\end{equation}
and for $\ga \in (-1, \half)$ we have, using~\eqref{posnotsmall}
and~\eqref{negnotsmall},
\begin{align}
\sgn(\ga) \cdot [\mu(\ga) - \tmu_{\eps}(\ga)] 
&\geq |\ga| c_4 [\log R(1/4)]^{ - \ga - 1}
- |\ga| C_3 \eps^{3/2} [\log R(1/5)]^{ - \ga - 1}.
\end{align}
Since by \refL{Lstrict}
\begin{align}
R(1/8) > R(1/7)
\mbox{\quad and\quad}R(1/4) < R(1/5),
\end{align}
for sufficiently small $\eps \leq (\min\{c_2 / C_1, c_4 / C_3\})^{2/3}$ the desired strict inequalities all follow.  Our calculations also  demonstrate that
\begin{equation}
\mu' - \tmu'_{\eps} 
\geq c_4 [\log R(1/4)]^{- 1} 
- C_3 \eps^{3/2} [\log R(1/5)]^{ - 1},  
\end{equation}
which is (strictly) positive for sufficiently small $\eps \leq (c_4 / C_3)^{2/3}$. 
\end{example}

\section{Negative moments of affine functions of tree size}\label{Amono}
The representation~\eqref{A.6recast} of $\mu(\ga)$ as an integral in terms of the offspring \pgf~$\Phi$ and the consequent ordering of $\mu$-values exhibited in \refT{Torder}\ref{T_neg_a} can be extended to treat means of more general functions of the \GW\ tree-size.  We illustrate this with the following theorem, used in the proof of \refT{TLaplace}.

\begin{theorem}\label{Tt>1}
For real $\ga < 0$ and $t > 1$, we have
\begin{equation}\label{t>1app}
\E (|\cT| - 1 + t)^{\ga} = (t - 1)^{\ga} \int_0^1\!\left[ 1 - \frac{c(\eta; - \ga, t)}{\Gamma(-\ga)} \right] \dd \eta, 
\end{equation}
where $c(\eta; - \ga, t)$ is the incomplete gamma function value
\begin{equation}
c(\eta; -\ga, t) = \int_{(t - 1) \log R(\eta)}^{\infty}\!v^{ - \ga - 1} e^{-v} \dd v.
\end{equation}
\end{theorem}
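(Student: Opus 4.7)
The plan is to rewrite $\E(|\cT|-1+t)^{\ga}$ via the standard Gamma-integral representation of negative powers,
\[
x^{\ga} = \frac{1}{\gG(-\ga)}\int_0^\infty v^{-\ga-1} e^{-v x}\dd v, \qquad x>0,\ \ga<0,
\]
applied pointwise to $x = |\cT|-1+t$. Since $t>1$ and $|\cT|\ge 1$ the argument $|\cT|-1+t$ is bounded away from $0$, and the integrand is non-negative, so Tonelli's theorem justifies moving the expectation inside:
\[
\E(|\cT|-1+t)^{\ga}
=\frac{1}{\gG(-\ga)}\int_0^\infty v^{-\ga-1} e^{-v(t-1)}\,\E e^{-v|\cT|}\dd v
=\frac{1}{\gG(-\ga)}\int_0^\infty v^{-\ga-1} e^{-v(t-1)} y(e^{-v})\dd v,
\]
by definition \eqref{yz} of the tree-size pgf $y$.

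Next I will convert the $v$-integral into an $\eta$-integral on $(0,1)$ using the relation between $y$ and $R$ from \refL{Lstrict} and \eqref{rdef}, namely $y\qw(\eta)=1/R(\eta)$. This gives the layer-cake identity
\[
y(e^{-v}) = \int_0^1 \ett{\eta < y(e^{-v})}\dd\eta
= \int_0^1 \ett{1/R(\eta) < e^{-v}}\dd\eta
= \int_0^1 \ett{v < \log R(\eta)}\dd\eta.
\]
Another application of Tonelli then swaps the order of integration to produce
\[
\E(|\cT|-1+t)^{\ga}
=\frac{1}{\gG(-\ga)}\int_0^1\!\int_0^{\log R(\eta)} v^{-\ga-1}e^{-v(t-1)}\dd v\,\dd\eta.
\]

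Finally, the substitution $w=(t-1)v$ in the inner integral (permissible because $t-1>0$) yields
\[
\int_0^{\log R(\eta)} v^{-\ga-1} e^{-v(t-1)}\dd v
= (t-1)^{\ga}\int_0^{(t-1)\log R(\eta)} w^{-\ga-1} e^{-w}\dd w
= (t-1)^{\ga}\bigl[\gG(-\ga) - c(\eta;-\ga,t)\bigr],
\]
which, after substitution back and cancellation of $\gG(-\ga)$, gives exactly \eqref{t>1app}. There is no real obstacle here: everything reduces to elementary manipulations once the key identity $y(e^{-v}) = \int_0^1 \ett{v<\log R(\eta)}\dd\eta$ is in hand. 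The only care needed is the standing hypothesis $t>1$ (so that the substitution $w=(t-1)v$ is well-defined and the upper limit $(t-1)\log R(\eta)$ is positive for every $\eta\in(0,1)$), and the non-negativity of integrands to validate the two applications of Tonelli's theorem.
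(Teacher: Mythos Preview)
Your proof is correct and takes a somewhat different route from the paper's. The paper first recognizes that $s\mapsto (s-1+t)^{\ga}/s$ is the Laplace transform of an explicit function~$g$ (built from the incomplete gamma function), writes $\E f(|\cT|)=\sum_n n q_n\int_0^\infty e^{-nx}g(x)\dd x$ to bring in $y'(e^{-x})e^{-x}$, and then performs the change of variables $\eta=y(u)$ to reach $\int_0^1 g(\log R(\eta))\dd\eta$. You instead apply the Gamma integral directly to $(s-1+t)^{\ga}$ (no division by~$s$), which produces $y(e^{-v})$ rather than $y'$; your layer-cake identity $y(e^{-v})=\int_0^1\ett{v<\log R(\eta)}\dd\eta$ then plays the role of the paper's change of variables. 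Your approach is a bit more elementary in that it avoids identifying~$g$ in advance and never touches $y'$; the paper's approach, on the other hand, fits the general template announced just before the theorem (treating $\E f(|\cT|)$ for more general~$f$) and makes the monotonicity in~$\eta$ of the integrand immediate via the explicit increasing function~$g$.
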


\begin{proof}
Let $f(s) := (s - 1 + t)^{\ga}$. 
Observe that $s \mapsto f(s) / s$ for $s > 0$ is the Laplace transform of the (strictly) increasing function~$g$ mapping $x > 0$ to
\begin{equation}\label{b3}
g(x) := (t - 1)^{\ga} \left[ 1 - \frac{\gamma((t - 1) x; -\ga)}{\Gamma(-\ga)} \right] \in (0, (t - 1)^{\alpha}),
\end{equation}
where here $\gamma(\cdot; -\ga)$ is the incomplete gamma function
\begin{equation}
\int_{\cdot}^\infty\!v^{ - \ga - 1} e^{-v} \dd v.
\end{equation}
Therefore
\begin{align}
\E (|\cT| - 1 + t)^{\ga}
&= \E f(|\cT|) \nonumber \\
&= \sum_{n = 1}^{\infty} \P(|\cT| = n) f(n) 
= \sum_{n = 1}^{\infty} n \P(|\cT| = n) \int_0^{\infty} e^{ - n x} g(x) \dd x \nonumber \\
&= \int_0^{\infty}\!g(x) y'(e^{-x}) e^{-x} \dd x 
= \int_0^1\!g( - \log u) y'(u) \dd u \nonumber \\
&= \int_0^1\!g\bigpar{\log (R(\eta))} \dd \eta,
\end{align}
again changing variables by $u = y\qw(\eta) = 1 / R(\eta)$, 
and \eqref{t>1app} follows by \eqref{b3}.
\end{proof}

\section{Comparisons allowing infinite offspring moments}\label{Ainfinite}
\refR{Rnec}(b) follows quickly from the following theorem concerning Laplace transforms.

\begin{theorem}\label{TLaplacetrans}
Let~$\xi$ 
be a (not necessarily integer-valued) nonnegative random variable
with Laplace transform~$f$ and moments
\begin{equation}
m_j := \E \xi^j\le\infty, \quad j = 0, 1, 2, \dots
\end{equation}
(with $m_0 := 0$).  For a given positive integer~$r$, suppose that $m_{r - 1} < \infty$.
Then
\begin{equation}
g(t) := (-1)^r t^{-r} r! \left[ f(t) - \sum_{j = 0}^{r - 1} (-1)^j m_j \frac{t^j}{j!} \right]
\end{equation}
is nonnegative 
for $t > 0$ and increases (weakly) to $m_r \leq \infty$ as $t \downto 0$. 
\end{theorem}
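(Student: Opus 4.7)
\medskip

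\textbf{Proof plan for \refT{TLaplacetrans}.}
The plan is to reduce everything to the integral form of Taylor's theorem applied pointwise to the function $t\mapsto e^{-t\xi}$. Specifically, since $\frac{d^r}{dt^r}e^{-t\xi}=(-1)^r\xi^r e^{-t\xi}$, Taylor's formula with integral remainder (for fixed $\xi\ge0$) gives
\begin{align}
e^{-t\xi}=\sum_{j=0}^{r-1}\frac{(-t\xi)^j}{j!}+\frac{(-1)^r\xi^r}{(r-1)!}\int_0^t(t-s)^{r-1}e^{-s\xi}\dd s,\qquad t\ge0.
\end{align}
I would first take expectations on both sides. The Taylor polynomial terms are integrable because $m_j<\infty$ for $j\le r-1$, and the remainder has a fixed sign $(-1)^r\xi^r e^{-s\xi}$, so Fubini applies regardless of whether $m_r$ is finite; this yields
\begin{align}
f(t)-\sum_{j=0}^{r-1}\frac{(-t)^j m_j}{j!}=\frac{(-1)^r}{(r-1)!}\int_0^t(t-s)^{r-1}\E\bigsqpar{\xi^r e^{-s\xi}}\dd s.
\end{align}

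Next I would substitute this into the definition of $g(t)$ and change variables $s=tu$ to obtain the clean representation
\begin{align}
g(t)=r\int_0^1(1-u)^{r-1}\E\bigsqpar{\xi^r e^{-tu\xi}}\dd u.
\end{align}
From this formula the three claims follow immediately. Nonnegativity is clear since the integrand is nonnegative. For each fixed $u\in[0,1]$, the quantity $\E[\xi^r e^{-tu\xi}]$ is weakly decreasing in $t$ (it is the Laplace transform of the nonnegative measure $\xi^r\,d\P_\xi$ evaluated at $tu$), so $g(t)$ is weakly decreasing in $t>0$, which is the same as saying it weakly increases as $t\downto0$. Finally, as $t\downto0$ we have $e^{-tu\xi}\upto 1$ pointwise, so by the monotone convergence theorem $\E[\xi^r e^{-tu\xi}]\upto m_r\le\infty$ for each $u$, and a second application of monotone convergence (on the integral over $u$) gives
\begin{align}
\lim_{t\downto0}g(t)=r\int_0^1(1-u)^{r-1}m_r\dd u=m_r.
\end{align}

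There is no real obstacle; the only point that requires a little care is the justification of Fubini when $m_r=\infty$, which is handled by the fixed sign of the integrand $(-1)^r\xi^r e^{-s\xi}$ (so that Tonelli, rather than Fubini, suffices) together with the fact that the bracketed expression on the left-hand side of the Taylor identity is a well-defined finite number under the hypothesis $m_{r-1}<\infty$.
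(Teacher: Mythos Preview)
Your proof is correct and is essentially the same as the paper's: both hinge on the integral form of Taylor's remainder for $e^{-x}$, yielding the representation $g(t)=r\int_0^1(1-u)^{r-1}\E[\xi^r e^{-tu\xi}]\dd u$ (the paper writes this with $v=1-u$ via its \refL{Lexpcalc}), from which nonnegativity, monotonicity, and the limit $m_r$ follow by the monotone convergence theorem. The only difference is order of operations---the paper first isolates the pointwise function $h(x)=(-1)^r x^{-r}\bigl[e^{-x}-\sum_{j<r}(-x)^j/j!\bigr]$ and proves its properties before taking expectations, whereas you take expectations first---but the mathematical content is identical.
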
 

We will prove \refT{TLaplacetrans} using the following calculus lemma.

\begin{lemma}\label{Lexpcalc}
Let~$r$ be a fixed positive integer, and define
\begin{equation}
h(x) := (-1)^r x^{-r} \left[ e^{-x} - \sum_{j = 0}^{r - 1} (-1)^j \frac{x^j}{j!} \right], \quad x > 0.
\end{equation}
Then~$h$ is (strictly) positive and (strictly) decreasing, with limit $1 / r!$ as $x \downto 0$.
\end{lemma}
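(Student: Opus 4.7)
The plan is to rewrite $h$ as an integral that makes all three assertions transparent. First I would recognize that the bracketed expression is exactly the Taylor remainder of $e^{-x}$ past order $r-1$, so one can write
\begin{align}
e^{-x}-\sum_{j=0}^{r-1}(-1)^j\frac{x^j}{j!}
=\frac{(-1)^r}{(r-1)!}\int_0^x (x-t)^{r-1}e^{-t}\dd t,
\end{align}
either by the integral form of Taylor's theorem applied to $f(t)=e^{-t}$ (with $f^{(r)}(t)=(-1)^r e^{-t}$) or by expanding $e^{-t}$ as a power series and integrating term by term. Substituting $t=xs$ and cancelling $(-1)^r$ and $x^{-r}$ then yields the clean representation
\begin{align}\label{hrep-plan}
h(x)=\frac{1}{(r-1)!}\int_0^1 (1-s)^{r-1}e^{-xs}\dd s, \qquad x>0.
\end{align}

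From \eqref{hrep-plan}, everything is immediate: the integrand is strictly positive for each $x>0$, so $h(x)>0$; differentiation under the integral sign gives
\begin{align}
h'(x)=-\frac{1}{(r-1)!}\int_0^1 s(1-s)^{r-1}e^{-xs}\dd s<0,
\end{align}
so $h$ is strictly decreasing; and by monotone (or dominated) convergence, as $x\downto0$ we have $e^{-xs}\upto 1$, so
\begin{align}
h(x)\upto \frac{1}{(r-1)!}\int_0^1 (1-s)^{r-1}\dd s=\frac{1}{(r-1)!}\cdot\frac{1}{r}=\frac{1}{r!}.
\end{align}

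There is essentially no obstacle here; the only step requiring any care is the initial Taylor-remainder identity, which is standard. As an alternative one could avoid the integral entirely by writing
\begin{align}
h(x)=\sum_{k=0}^{\infty}\frac{(-x)^k}{(k+r)!}
\end{align}
after shifting the index in the power series, and then grouping consecutive terms $(k=2i, k=2i+1)$ to show positivity and monotonicity term by term, but the integral form \eqref{hrep-plan} is cleaner and yields the limit $1/r!$ without further work.
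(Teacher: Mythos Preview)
Your proof is correct and follows essentially the same route as the paper's: both derive the integral representation from Taylor's theorem with integral remainder and a linear change of variables (your substitution $t=xs$ yields $\int_0^1 (1-s)^{r-1}e^{-xs}\dd s$, which is the paper's $\int_0^1 v^{r-1}e^{-x(1-v)}\dd v$ after $v=1-s$), and then read off positivity, monotonicity, and the limit directly from the integrand. The paper also records, as a second derivation, the power-series/Beta-function computation you mention as an alternative.
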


\begin{proof}
The lemma is immediate from the claim that
\begin{equation}\label{Taylor}
h(x) = \frac{1}{(r - 1)!} \int_0^1\!v^{r - 1} e^{- x (1 - v)} \dd v. 
\end{equation}
We offer two proofs of this claim.

\begin{proof}[Proof \#1 of~\eqref{Taylor}]
By Taylor's theorem with remainder in integral form,
\begin{equation}
h(x) = \frac{x^{-r}}{(r - 1)!} \int_0^x\!(x - u)^{r - 1} e^{-u} \dd u.
\end{equation}
Now simply change the variable of integration from~$u$ to $v = 1 - \frac{u}{x}$.
\noqed
\end{proof}

\begin{proof}[Proof \#2 of~\eqref{Taylor}]
Let~$B$ denote Euler's beta function.  Then the right side of~\eqref{Taylor} equals
\begin{align}
\frac{1}{(r - 1)!} \sum_{k = 0}^{\infty} (-1)^k \frac{x^k}{k!} \int_0^1\! v^{r - 1} (1 - v)^k \dd v
&= \frac{1}{(r - 1)!} \sum_{k = 0}^{\infty} (-1)^k \frac{x^k}{k!} B(r, k + 1)
\notag\\
&= (-1)^r x^{-r} \sum_{j = r}^{\infty} (-1)^j \frac{x^j}{j!} = h(x).
\end{align}
\noqed
\end{proof}
\end{proof}

\begin{proof}[Proof of \refT{TLaplacetrans}]
For $t > 0$ we have
\begin{align}
g(t) 
&= \E \left[ (-1)^r t^{-r} r! \left( e^{- t \xi} - \sum_{j = 0}^{r - 1} (-1)^j \frac{(t \xi)^j}{j!} \right) \right] \notag\\
&= \E \left[ (-1)^r t^{-r} r! \left( e^{- t \xi} - \sum_{j = 0}^{r - 1} (-1)^j \frac{(t \xi)^j}{j!} \right);\,\xi > 0 \right] \notag\\
&= \E \left[ r!\,h(t \xi)\,\xi^r;\,\xi > 0 \right].
\end{align}
By \refL{Lexpcalc}, the nonnegative random variables $r!\,h(t \xi)\,\xi^r {\bf 1}(\xi > 0)$ increase (weakly) to 
$\xi^r {\bf 1}(\xi > 0) = \xi^r$ as $t \downto 0$.  Thus, by the MCT, $g(t) \upto m_r \leq \infty$ as $t \downto 0$.    
\end{proof}

\section{Corrigendum to \cite{SJ359}}\label{Atypo}
As said in \refR{R=}, there is a typo in \cite[Theorem D.1]{SJ359}; the
variance given in (D.2) there is incorrect and should be
\begin{align*}
    \E|\zeta|^2=
\frac{1}{2\sqrt\pi} \Re\frac{\gG(\ii t-\frac12)}{\gG(\ii t)},
\end{align*}
as stated in \eqref{ri2}. 

The formula (D.2) in \cite{SJ359} has, incorrectly, $\gG(\ii t-1)$ in the
denominator, which comes from (D.5) which has the same error.
Formula (D.8) in the proof is correct, with denominator $\gG(\ii t)$,
and yields (D.5) and (D.2) with the same denominator, 
i.e., \eqref{ri2}.

Theorem D.1 in \cite{SJ359} also claims that $\E|\zeta|^2>0$. 
The proof is based on the incorrect formula given there, but luckily the
same proof applies also to the correct formula. In (D.14) we
obtain $\gG(1-\ii t)$ instead of $\gG(2-\ii t)$ (and an immaterial
change of sign); hence we have to show that 
$\gG(1-\ii t)/\gG(\frac32-\ii t)$ is not real for $t\neq0$.
Thus, in (D.15), we should have $-\Im \int_{1}^{3/2}\psi(s-\ii t)\dd s$.
We use (D.18) as before, and now see that if $t<0$, then
$0>\arg\bigpar{\gG(1-\ii t)/\gG(\frac32-\ii t)}>-\pi/4$, which completes the
proof that the variance in \eqref{ri2} (and the display above) is nonzero.


\newcommand\AAP{\emph{Adv. Appl. Probab.} }
\newcommand\JAP{\emph{J. Appl. Probab.} }
\newcommand\JAMS{\emph{J. \AMS} }
\newcommand\MAMS{\emph{Memoirs \AMS} }
\newcommand\PAMS{\emph{Proc. \AMS} }
\newcommand\TAMS{\emph{Trans. \AMS} }
\newcommand\AnnMS{\emph{Ann. Math. Statist.} }
\newcommand\AnnPr{\emph{Ann. Probab.} }
\newcommand\CPC{\emph{Combin. Probab. Comput.} }
\newcommand\JMAA{\emph{J. Math. Anal. Appl.} }
\newcommand\RSA{\emph{Random Struct. Alg.} }
\newcommand\ZW{\emph{Z. Wahrsch. Verw. Gebiete} }
\newcommand\DMTCS{\jour{Discr. Math. Theor. Comput. Sci.} }

\newcommand\AMS{Amer. Math. Soc.}
\newcommand\Springer{Springer-Verlag}
\newcommand\Wiley{Wiley}

\newcommand\vol{\textbf}
\newcommand\jour{\emph}
\newcommand\book{\emph}
\newcommand\inbook{\emph}
\def\no#1#2,{\unskip#2, no. #1,} 
\newcommand\toappear{\unskip, to appear}

\newcommand\arxiv[1]{\texttt{arXiv:#1}}
\newcommand\arXiv{\arxiv}
\newcommand\xand{\& }

\end{document}